\newtheorem{dfn}{Definition}[section]
\newtheorem{thm}{Theorem}[section]
\newtheorem{lem}[thm]{Lemma}
\newtheorem{pro}[thm]{Proposition}
\newtheorem{cor}[thm]{Corollary}
\theoremstyle{remark}
\newtheorem{rmk}{Remark}[section]
\title{Cross-connection structure of concordant semigroups}
\thanks{\noindent The first author acknowledges the financial support of the Competitiveness Enhancement Program of Ural Federal University, Russia during the preparation of this article.}
\author{P. A. Azeef Muhammed }
\address{Institute of Natural Sciences and Mathematics, Ural Federal University, 620000 Ekaterinburg, Russia.}
\email{azeefp@gmail.com, a.a.parail@urfu.ru}
\author{P. G. Romeo} 
\address{Department of Mathematics,
Cochin University of Science And Technology,
Cochin- 682022, India.}
\email{romeopg@cusat.ac.in}
\author{K. S. S. Nambooripad}
\address{Department of Mathematics, University of Kerala, Thiruvananthapuram- 695581, India.}
\email{kss267@gmail.com}
\keywords{Concordant semigroup, consistent category, consistent factorisation, cross-connections, dual, inductive cancellative category.}
\subjclass[2010]{20M10, 20M50, 18A32.}
\begin{document}

\maketitle

\begin{abstract}
Cross-connection theory provides the construction of a semigroup from its ideal structure using small categories. A concordant semigroup is an idempotent-connected abundant semigroup whose idempotents generate a regular subsemigroup. We characterize the categories arising from the generalised Green relations in the concordant semigroup as consistent categories and describe their interrelationship using cross-connections. Conversely, given a pair of cross-connected consistent categories, we build a concordant semigroup. We use this correspondence to prove a category equivalence between the category of concordant semigroups and the category of cross-connected consistent categories. In the process, we illustrate how our construction is a generalisation of the cross-connection analysis of regular semigroups. We also identify the inductive cancellative category associated with a pair of cross-connected consistent categories.
\end{abstract}

\section{Background and overview}\label{sec1}

Semigroups are natural, yet rather general algebraic objects. Hence structure theorems of semigroups are quite elusive and often provided using partially ordered sets, semilattices, groups, groupoids, small categories etc. as the basic building blocks. Due to the inherent generality of arbitrary semigroups, the historical development of structure theory dealt with special classes of semigroups which admit a simpler structure. One of the first structural results in semigroup theory known as the Rees-Sushkevich theorem described the structure of completely $0$-simple semigroups using groups and sets \cite{rees}. 

Later, the search of a breakthrough in more general classes of semigroups shifted the focus onto fundamental semigroups. A semigroup is said to be \emph{fundamental} if it cannot be shrunk homomorphically without collapsing its idempotents; roughly speaking, a fundamental image of a semigroup will provide its `structural skeleton'. In 1970, Munn \cite{munn} described the structure of fundamental inverse semigroups by exploring the semilattice of idempotents of the semigroup. Inspired by Munn's construction, two  approaches to study the structure of regular semigroups were established in the early seventies. The first approach initiated by the third author \cite{ksssf,mem} involved the characterisation of the structure of the idempotents of a regular semigroup as a regular biordered set and building a fundamental regular semigroup as an exact generalisation of Munn's construction. In \cite{mem},  the construction was extended to arbitrary regular semigroups using inductive groupoids (specialised small groupoids whose identities form a regular biordered set) and, a category equivalence between the category of regular semigroups and the category of the inductive groupoids was also proved. This work was a major milestone in the context of the so-called Ehresmann-Schein-Nambooripad (ESN) theorem \cite{lawson} and its later generalisations in several directions \cite{lawson,hollings,gomes,gould2,gouldrestr}. 

The second approach initiated independently by Hall \cite{hall} relied on the idea that instead of encoding the idempotent structure of a regular semigroup as a set equipped with two orders, one can explore the ideal structure of the semigroup and use two partially ordered sets (one each from the principal left and right ideals) to build the fundamental semigroup. Grillet \cite{gril,gril1,gril2} clarified Hall's ideas by characterising such partially ordered sets as regular partially ordered sets (regular posets). Grillet also introduced the notion of cross-connections to explicitly describe the relationship that must exist between the regular posets so that they give rise to a fundamental regular semigroup. 

In \cite{bicxn}, the third author proved the equivalence of the two constructions in the fundamental case. Later, elaborating on that equivalence, he \cite{cross} successfully extended Grillet's cross-connection construction to arbitrary regular semigroups (not only fundamental ones) by replacing regular posets with what he called normal categories. These normal categories are essentially small categories whose identities form a regular poset. In \cite{cross}, a category equivalence between the category of regular semigroups and the category of cross-connected normal categories was also proved. Recently, the first author and Volkov \cite{indcxn1,indcxn2} showed the direct equivalence of the above discussed approaches to arbitrary regular semigroups: the ESN approach and the cross-connection approach.

Recall that the structure of an arbitrary semigroup is naturally composed of two components: left and right. The very fact that we need a `bi'-ordered set (each quasi-order coming from each of the left and right Green relations), to characterise the idempotents of a semigroup, is a reflection of this property. Hence, except for special classes of semigroups (such as inverse semigroups) with additional `structural symmetry', the natural approach to a structural description should use two categories (and a cross-connection to connect them) and not one (as in the ESN approach).

Several classes of semigroups (beyond inverse and regular) were studied using `ESN like' approaches in the last three decades \cite{lawson,hollings,gomes,gould2,gouldrestr,wangureg,swang2017,swang2018,stokes2017}. But this literature is almost reaching the saturation point, due to the obvious constraint of relying on a suitable set of idempotents of the semigroup. One major advantage which the cross-connection approach has over the ESN approach is that is not restricted to the `idempotent part' of the semigroup as it relies on the ideal structure of the semigroup. Hence, we can study more general classes of semigroups using cross-connections. In this article, we use the cross-connection theory to study a class of non-regular semigroups called 
concordant semigroups; thus, the first success in the cross-connection theory, beyond the regular case. 

Concordant semigroups were introduced and studied by Armstrong \cite{armstrong}, as generalisations of regular semigroups. Concordant semigroups include all full subsemigroups of regular semigroups, cancellative monoids and Rees matrix semigroups over a cancellative monoid with the sandwich matrix entries coming from the group of units \cite{armstrong}. Using the ESN approach, Armstrong proved a category equivalence of the category of concordant semigroups with the category of inductive cancellative categories---which are generalisations of inductive groupoids. It must be mentioned here that, more recently, Armstrong's result has been further generalised to weakly $U$-regular semigroups by Wang \cite{wangureg}.

The structure of the article is as follows. In Section \ref{secconc}, we discuss concordant semigroups and study the category of principal left (right) ideals generated by their idempotents. In Section \ref{seccons}, we abstractly characterise this category as consistent category and construct an intermediary concordant semigroup arising from it. In the next section, we introduce the notion of consistent dual and cross-connections, and show how a concordant semigroup gives rise to a pair of consistent categories which are cross-connected. Section \ref{seccxn} describes the converse: how a pair of cross-connected consistent categories gives rise to a concordant semigroup. In the next section, we prove the category equivalence between the category of concordant semigroups and the category of cross-connected consistent categories. In the last section, we specialise our results to normal categories to obtain the  regular semigroup case. Also, we identify the inductive cancellative category `sitting inside' the cross-connected consistent categories, thereby describing the interrelationship between our approach and Armstrong's one.  
  
The article is structured in such a way that even a fresh reader (with no prior understanding of the cross-connections of regular semigroups) would be able to follow the discussion. Nevertheless, to place the results of this article in a proper context, we have included a brief outline of the cross-connection construction of regular semigroups as Appendix \ref{appcxnrs}. The notions undefined in the appendix shall be motivated and precisely defined in the due course of the article.

Our results reaffirm the suitability of cross-connections to describe the structure of general classes of semigroups. In fact, the third author has made the first step in generalising the cross-connection theory to arbitrary semigroups by introducing set based categories (SBCs) \cite{newcross,newcross1}. Hence, the article is an invitation for the readers to employ  cross-connection theory to do what it was built for: to overcome the limitation of not having an inbuilt left-right duality.

\section{Concordant semigroups}\label{secconc}
In this section, first we introduce concordant semigroups and then with each concordant semigroup $S$, we associate two categories $\mathbb{L}(S)$ and $\mathbb{R}(S)$ and study their properties. This will lead us to their characterisation as abstract categories. We assume familiarity with some basic notions from category theory and semigroup theory. For undefined notions, we refer to \cite{mac,higginscat} for category theory and \cite{clif,howie,grillet} for semigroups and biordered sets. Since the construction is very much similar to that of regular semigroups, often when an exact repetition of arguments suffices, we shall refer to \cite{cross}. In the sequel, all functions and morphisms shall be written in the order of their composition, i.e., from left to right.

\subsection{Generalised Green relations}
Although concordant semigroups were formally introduced and studied by Armstrong as generalisations of regular semigroups, the ideas originated from the earlier works of Fountain, El-Qallali, Lawson et al. on  generalised Green relations \cite{fountain,elqallali,lawsonabundant} in the context of abundant semigroups.

The generalised Green relations $\mathrel{\mathscr{L}}^*$ and $\mathrel{\mathscr{R}}^*$ on a semigroup $S$ are defined as follows: two elements $a$ and $b$ of a semigroup $S$ are $\mathrel{\mathscr{L}}^*$-related ($\mathrel{\mathscr{R}}^*$-related) if and only if they are $\mathrel{\mathscr{L}}$-related ($\mathrel{\mathscr{R}}$-related) in some oversemigroup of $S$. Clearly, $\mathrel{\mathscr{L}}^*$ and $\mathrel{\mathscr{R}}^*$ are right and left congruences, respectively.  
\begin{lem}\cite[Section X.1.6]{lyapin} \cite[Lemma 1.7]{mcalcancel}  Let $a$ and $b$ be elements of a semigroup $S$, then the following are equivalent: 
\begin{enumerate}[(i)]
	\item $a\mathrel{\mathscr{L}}^*b$;
	\item for all $x,y\in S^1$, $ax=ay$ if and only if $bx=by$.
\end{enumerate}
\end{lem}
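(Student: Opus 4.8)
The plan is to prove the equivalence (i)$\Leftrightarrow$(ii), with essentially all the content in the direction (ii)$\Rightarrow$(i).

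\emph{The direction (i)$\Rightarrow$(ii)} is routine. If $a\mathscr{L}^*b$, fix an oversemigroup $T\supseteq S$ in which $a\mathscr{L}b$; then $a\in T^1b$ and $b\in T^1a$, say $a=pb$ and $b=qa$ with $p,q\in T^1$. For $x,y\in S^1$ the products $ax,bx$ are the same whether formed in $S^1$ or in $T^1$, so $ax=ay$ yields $bx=q(ax)=q(ay)=by$, and symmetrically $bx=by$ yields $ax=p(bx)=p(by)=ay$; this is exactly (ii). (The only point to watch is the usual bookkeeping with adjoined identities when $S$ already has one, which is harmless.)

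\emph{The direction (ii)$\Rightarrow$(i)} calls for the construction of an oversemigroup of $S$ in which $a$ and $b$ become $\mathscr{L}$-related, and the natural device is the regular representation. The crucial observation is that hypothesis (ii) is precisely what makes the assignment
\[
\theta\colon bS^1\longrightarrow aS^1,\qquad bx\longmapsto ax\quad(x\in S^1),
\]
a well-defined map (because $bx=by$ forces $ax=ay$) and injective (because $ax=ay$ forces $bx=by$); it is plainly surjective, so $\theta$ is a bijection with inverse $ax\mapsto bx$. I would then write $\lambda_s\colon x\mapsto sx$ for the translation of the set $S^1$ by $s\in S$, so that $s\mapsto\lambda_s$ embeds $S$ faithfully (as $\lambda_s$ sends $1$ to $s$) into the semigroup of self-maps of $S^1$ under composition; choose total maps $P,Q\colon S^1\to S^1$ extending $\theta$ and $\theta^{-1}$ respectively (the values off $bS^1$ and $aS^1$ being immaterial); and set $T=\langle\,\lambda_s\ (s\in S),P,Q\,\rangle$. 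Since the representation is faithful, $T$ is an oversemigroup of $S$, and from $P\circ\lambda_b=\lambda_a$ and $Q\circ\lambda_a=\lambda_b$ one gets $\lambda_a\in T^1\lambda_b$ and $\lambda_b\in T^1\lambda_a$, i.e.\ $\lambda_a\mathscr{L}\lambda_b$ in $T$; hence $a\mathscr{L}^*b$. (To keep strictly to the convention that composites are read left to right, one simply runs the same construction for the dual property $\mathscr{R}^*$ on $S^{\mathrm{op}}$, where (ii) is the defining condition verbatim, and transfers back.)

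The sole obstacle lies in this second implication, and it splits into a conceptual half — recognising that (ii) says exactly that $\theta$ is a well-defined bijection — and a bookkeeping half — checking that adjoining $P,Q$ to the regular representation neither collapses the copy of $S$ nor interferes with the relations $P\circ\lambda_b=\lambda_a$, $Q\circ\lambda_a=\lambda_b$, which is straightforward but requires the usual care with one-sided identities and with the order of composition. The statement is classical (see the cited references), so I would include the argument only for the sake of self-containedness.
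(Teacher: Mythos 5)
Your argument is correct; note that the paper itself gives no proof of this lemma, citing it to Lyapin and McAlister, and what you have written is essentially the classical argument from those sources: (i)$\Rightarrow$(ii) by multiplying on the left by the elements witnessing $\mathscr{L}$ in the oversemigroup, and (ii)$\Rightarrow$(i) by observing that (ii) says exactly that $bx\mapsto ax$ is a well-defined bijection of $bS^1$ onto $aS^1$, then extending it to total maps $P,Q$ of $S^1$ and adjoining them to the faithful left regular representation $s\mapsto\lambda_s$, so that $\lambda_a=P\circ\lambda_b$ and $\lambda_b=Q\circ\lambda_a$ exhibit $\lambda_a\mathscr{L}\lambda_b$ in the generated transformation semigroup. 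The only cosmetic point is your closing remark about the left-to-right composition convention: it is simpler to say that the convention used inside this self-contained proof is immaterial (or to use right translations acting on the right) than to detour through $S^{\mathrm{op}}$, but this does not affect correctness.
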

The above lemma shows that any idempotent acts as a right identity within its $\mathrel{\mathscr{L}}^*$-class and further, we have the following.
\begin{lem}\cite[Corollary 1.2]{fountain}\label{lemconid} Let $e$ be an idempotent in a semigroup $S$, then for an arbitrary $a\in S$, the following are equivalent: 
	\begin{enumerate}[(i)]
		\item $a\mathrel{\mathscr{L}}^*e$;
		\item $ae=a$ and for all $x,y\in S^1$, $ax=ay$ implies $ex=ey$.
	\end{enumerate}
\end{lem}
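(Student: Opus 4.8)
The plan is to derive both implications from the preceding characterisation of $\mathscr{L}^*$ (the equivalence $ax=ay\iff bx=by$ for all $x,y\in S^1$), together with the remark that an idempotent acts as a right identity on its $\mathscr{L}^*$-class. No oversemigroup needs to be invoked directly; everything will be a short manipulation of these defining conditions.

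For the direction (i) $\Rightarrow$ (ii): assuming $a\,\mathscr{L}^*\,e$, the preceding lemma supplies, for all $x,y\in S^1$, the biconditional $ax=ay\iff ex=ey$. The implication ``$ax=ay$ implies $ex=ey$'' in (ii) is then one half of this. To obtain $ae=a$, I would instantiate the biconditional at $x=e$, $y=1$: since $e^2=e=e\cdot 1$, the right-hand side $ex=ey$ holds, hence so does $ax=ay$, i.e. $ae=a\cdot 1=a$.

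For the direction (ii) $\Rightarrow$ (i): by the preceding lemma it suffices to promote the one-sided implication in (ii) to the full biconditional $ax=ay\iff ex=ey$. Only the reverse implication remains, and here I would use $ae=a$: if $ex=ey$, then $ax=(ae)x=a(ex)=a(ey)=(ae)y=ay$. Combining this with the hypothesised forward implication gives the biconditional, and the preceding lemma then yields $a\,\mathscr{L}^*\,e$.

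I do not anticipate any real obstacle: the argument is essentially bookkeeping with the defining conditions. The only mildly non-obvious step is the choice $x=e$, $y=1$ used to extract the identity $ae=a$ from the abstract biconditional; everything else is routine associativity.
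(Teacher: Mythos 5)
Your proof is correct: both implications follow exactly as you describe from the preceding characterisation of $\mathscr{L}^*$ (taking $x=e$, $y=1$ to extract $ae=a$, and using $ae=a$ with associativity in $S^1$ to recover the reverse implication). The paper itself gives no proof but simply cites Fountain's Corollary 1.2, and your argument is precisely the standard derivation of that corollary from the lemma stated just before it, so there is nothing to add.
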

Clearly, analogous dual results hold for the relation $\mathrel{\mathscr{R}}^*$. It can be easily seen that if a semigroup is regular, then the generalised Green relations coincide with the original Green relations. 
\begin{dfn}\cite{fountain}
A semigroup $S$ is said to be \emph{abundant} if each $\mathrel{\mathscr{L}}^*$-class and each $\mathrel{\mathscr{R}}^*$-class of $S$ contains an idempotent.
\end{dfn}
Let $S$ be an abundant semigroup with the set of idempotents $E(S)$. As in \cite{mem}, we can define quasi-orders $\mathrel{\omega}^r$ and $\mathrel{\omega}^l$ on $E(S)$ as follows:
$$e \mathrel{\omega}^l f\iff ef=e \iff Se \subseteq Sf; \text{  and  } e \mathrel{\omega}^r f \iff fe=e \iff eS \subseteq fS$$
where $e,f \in E(S)$. Then clearly the restrictions of the Green relations on the idempotents of the semigroup are given by ${\mathscr{L}}=\omega^l\cap(\omega^l)^{-1}$ and ${\mathscr{R}}=\omega^r\cap(\omega^r)^{-1}$. Also the natural partial order $\omega$ on $E(S)$ is given by $\omega=\omega^l\cap\omega^r$. For $e\in E(S)$, we denote by $\langle e\rangle$ the subsemigroup generated by the set
$$\omega(e)=\{g\in E(S)\::\: ge=eg=g\}.$$
\begin{dfn}\cite{elqallali}
A semigroup $S$ is \emph{idempotent-connected (IC)} if for each element $a\in S$ and for some $a^\dagger\in R_a^*(S) \cap E(S)$, $a^\ast\in L_a^*(S) \cap E(S)$, there is a bijection $\alpha\colon\langle a^\dagger \rangle \to \langle a^\ast \rangle$ satisfying $xa=a(x\alpha)$ for all $x\in \langle a^\dagger \rangle$.	
\end{dfn}
The above condition may be seen as a generalisation of the ample condition satisfied by ample (also known as `type A') semigroups \cite{armstrongample}. It can be seen that $\alpha$ should in fact be an isomorphism \cite{elqallali}, which shall be called as a \emph{connecting isomorphism} in the sequel. Observe that any regular semigroup is idempotent-connected \cite{elqallali}. We shall require the following lemma in the sequel, which will simplify the IC condition:
\begin{lem}\cite[Corollary to Lemma 2.3]{armstrong}\label{lemico}
A semigroup is idempotent-connected if and only if for each element $a\in S$ and for some $a^\dagger\in R_a^*(S) \cap E(S)$, $a^\ast\in L_a^*(S) \cap E(S)$, there is a unique bijection $\alpha\colon \omega( a^\dagger ) \to \omega( a^\ast )$ satisfying $xa=a(x\alpha)$ for all $x\in \omega( a^\dagger )$.	
\end{lem}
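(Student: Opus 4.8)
The plan is to reduce everything to two cancellation laws coming from $\mathscr{L}^\ast$ and $\mathscr{R}^\ast$, together with the elementary observation that for an idempotent $e$ the subsemigroup $\langle e\rangle$ is a monoid with identity $e$ whose idempotents are exactly the elements of $\omega(e)$. First I would record these preliminaries. Each generator $g\in\omega(a^\dagger)$ satisfies $a^\dagger g=g=g a^\dagger$, so $a^\dagger$ is a two-sided identity of $\langle a^\dagger\rangle$; hence $\langle a^\dagger\rangle$ is a monoid, $a^\dagger\in\langle a^\dagger\rangle$, and an element $h\in\langle a^\dagger\rangle$ is idempotent in $S$ precisely when $ha^\dagger=a^\dagger h=h$, i.e. $h\in\omega(a^\dagger)$; thus $E(\langle a^\dagger\rangle)=\omega(a^\dagger)$, and symmetrically $E(\langle a^\ast\rangle)=\omega(a^\ast)$. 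Next, since $a^\ast\in L_a^\ast(S)\cap E(S)$ we have $a\,\mathscr{L}^\ast a^\ast$, so by Lemma~\ref{lemconid} the equality $ac_1=ac_2$ implies $a^\ast c_1=a^\ast c_2$; as $a^\ast$ is a left identity of $\langle a^\ast\rangle$, this forces $c_1=c_2$ whenever $c_1,c_2\in\langle a^\ast\rangle$. Dually, from $a\,\mathscr{R}^\ast a^\dagger$ one gets $c_1a=c_2a\Rightarrow c_1=c_2$ for $c_1,c_2\in\langle a^\dagger\rangle$. These two cancellation laws will do all the work.

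For the forward implication, fix $a\in S$ and let $a^\dagger,a^\ast$ and the bijection $\alpha\colon\langle a^\dagger\rangle\to\langle a^\ast\rangle$ be as supplied by idempotent-connectedness. From $a\bigl((xy)\alpha\bigr)=(xy)a=(xa)(y\alpha)=a(x\alpha)(y\alpha)$ and left cancellation of $a$ inside $\langle a^\ast\rangle$ one gets $(xy)\alpha=(x\alpha)(y\alpha)$, so $\alpha$ is a homomorphism and hence an isomorphism. An isomorphism preserves and reflects idempotency, so $\alpha$ restricts to a bijection $\alpha_0$ from $E(\langle a^\dagger\rangle)=\omega(a^\dagger)$ onto $E(\langle a^\ast\rangle)=\omega(a^\ast)$, and $xa=a(x\alpha_0)$ holds for $x\in\omega(a^\dagger)$. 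Uniqueness is immediate: if $\beta$ is another such bijection, then for $x\in\omega(a^\dagger)$ we have $a(x\alpha_0)=xa=a(x\beta)$ with $x\alpha_0,x\beta\in\langle a^\ast\rangle$, whence $x\alpha_0=x\beta$ by left cancellation.

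For the converse, suppose a bijection $\alpha_0\colon\omega(a^\dagger)\to\omega(a^\ast)$ with $xa=a(x\alpha_0)$ for all $x\in\omega(a^\dagger)$ is given (its uniqueness being as above), and I would extend it to $\langle a^\dagger\rangle$. For $x\in\langle a^\dagger\rangle$ pick a factorisation $x=g_1\cdots g_n$ with $g_i\in\omega(a^\dagger)$; applying the defining relation repeatedly, $xa=a\,(g_1\alpha_0)\cdots(g_n\alpha_0)$, so there is $c\in\langle a^\ast\rangle$ with $ac=xa$, and by the $\mathscr{L}^\ast$-cancellation law such $c$ is unique and therefore independent of the chosen factorisation. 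Setting $x\alpha:=c$ thus gives a well-defined map $\alpha\colon\langle a^\dagger\rangle\to\langle a^\ast\rangle$ extending $\alpha_0$ and satisfying $xa=a(x\alpha)$. It is a homomorphism, since $a\bigl((xy)\alpha\bigr)=(xy)a=a(x\alpha)(y\alpha)$ with $(x\alpha)(y\alpha)\in\langle a^\ast\rangle$; it is onto, since its image is a subsemigroup of $\langle a^\ast\rangle$ containing the generating set $\omega(a^\dagger)\alpha_0=\omega(a^\ast)$; and it is one-to-one, since $x\alpha=y\alpha$ forces $xa=a(x\alpha)=a(y\alpha)=ya$, whence $x=y$ by the dual cancellation law. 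Hence $\alpha$ is a bijection of the kind demanded by the definition, and $S$ is idempotent-connected.

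The one point that genuinely needs care is the well-definedness of the extension $\alpha$ in the converse direction; this is exactly where the $\mathscr{L}^\ast$-cancellation law is indispensable, and once it is available the remainder is a routine transfer of structure between the monoids $\langle a^\dagger\rangle$, $\langle a^\ast\rangle$ and their idempotent sets.
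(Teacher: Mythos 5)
Your proof is correct: the restriction argument (via the fact that the connecting bijection on $\langle a^\dagger\rangle$ is forced to be an isomorphism by $\mathscr{L}^*$-cancellation, hence maps $E(\langle a^\dagger\rangle)=\omega(a^\dagger)$ onto $\omega(a^\ast)$) and the extension argument in the converse (well-defined by the same cancellation, injective by the dual $\mathscr{R}^*$-cancellation) are exactly what is needed. The paper itself gives no proof of this lemma --- it is quoted from Armstrong's Corollary to Lemma 2.3 --- and your argument is essentially the standard one used there, so there is nothing further to reconcile; the only cosmetic slip is the phrase ``$h\in\langle a^\dagger\rangle$ is idempotent in $S$ precisely when $ha^\dagger=a^\dagger h=h$'' (every element of the monoid $\langle a^\dagger\rangle$ satisfies those equalities; what you mean, and use, is $E(\langle a^\dagger\rangle)=\omega(a^\dagger)$, which is correct).
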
 

\begin{dfn}
A \emph{concordant semigroup} is an idempotent-connected abundant semigroup whose idempotents generate a regular subsemigroup.
\end{dfn}

The homomorphic image of an abundant semigroup is not necessarily abundant. So, a homomorphism $\phi\colon S \to T$ of semigroups is defined to be a \emph{good homomorphism} if for any $a,b \in S$, $a\mathrel{\mathscr{L}}^*b$ in $S$ implies $a\phi\mathrel{\mathscr{L}}^*b\phi$ in $T$ and $a\mathrel{\mathscr{R}}^*b$ in $S$ implies $a\phi\mathrel{\mathscr{R}}^*b\phi$ in $T$ \cite{elqallali}. Then as shown in \cite[Theorem 2.5]{armstrong}, a `good homomorphic' image of a concordant semigroup is concordant. Hence concordant semigroups with good homomorphisms as morphisms form a category, say $\mathbf{CS}$. It has been shown in \cite{romeocal,romeo} that the category $\mathbf{RS}$ of regular semigroups is a reflective subcategory of the category $\mathbf{CS}$.

\subsection{Categories from a concordant semigroup}
To extend the cross-connection analysis to concordant semigroups, we need to identify proper generalisations of normal categories of the regular case. This quest leads us to the category $\mathbb{L}(S)$ that arises from the principal left ideals generated by the idempotents of a concordant semigroup. In the sequel, $S$ shall denote a concordant semigroup and  $E(S)$ its set of idempotents. The set of objects of the category $\mathbb{L}(S)$ is given by 
$$v\mathbb{L}(S) = \{ Se : e \in E(S) \}.$$
For each $x\in Se$, a morphism from $Se$ to $Sf$ is the function $\rho(e,u,f)\colon x \mapsto xu $ where $u\in eSf$. Thus a morphism is a partial right translation (i.e. a right translation restricted to a principal left ideal). Then, as in \cite[Lemma III.12]{cross}, we can easily see that the morphisms $\rho(e,u,f)=\rho(g,v,h)$ if and only if $e\mathrel{\mathscr{L}}g$, $f\mathrel{\mathscr{L}}h$ and $v=gu$. Also, given any two morphisms, say $\rho(e,u,f)$ and $\rho(g,v,h)$, they are composable if $Sf=Sg$ (i.e., if $f\mathrel{\mathscr{L}}g$) and then
$$\rho(e,u,f)\:\rho(g,v,h) = \rho(e,uv,h).$$
It is clear that $\mathbb{L}(S)$ forms a small category such that $\rho(e,e,e)$ is the identity morphism at the vertex $Se$ and $\mathbb{L}(S)$ is a subcategory of the category $\mathbf{Set}$. 
Thus the set of all morphisms in  the category $\mathbb{L}(S)$ from the object $Se$ to $Sf$ is given by the set $ \{ \rho(e,u,f) : u\in eSf \}$.

Recall that a morphism in a category is called a \emph{monomorphism} if it is right cancellable; an \emph{epimorphism} if it is left cancellable; and a \emph{bimorphism} if it is both right and left cancellable. A morphism $f\colon c\to d$ in a category $\mathcal{C}$ is said to be an \emph{isomorphism} if there exists a morphism $g\colon d\to c$ in $\mathcal{C}$ such that $fg=1_c$ and $gf=1_d$. Clearly, an isomorphism is a bimorphism, but not conversely. The next lemma characterises the morphisms in the category $\mathbb{L}(S)$.

\begin{lem}\label{lemmor}
Let $S$ be a concordant semigroup and $\rho(e,u,f)$ be an arbitrary morphism in the category $\mathbb{L}(S)$. Then
\begin{enumerate}[(i)]
\item $\rho(e,u,f)$ is a monomorphism if and only if $e\mathrel{\mathscr{R}}^*u$;
\item $\rho(e,u,f)$ is an epimorphism if and only if $u\mathrel{\mathscr{L}}^*f$;
\item $\rho(e,u,f)$ is a bimorphism if and only if $e\mathrel{\mathscr{R}}^*u\mathrel{\mathscr{L}}^*f$.
\end{enumerate}
\end{lem}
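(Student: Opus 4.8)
The plan is to characterize each cancellation property of the set-map $\rho(e,u,f)\colon x \mapsto xu$ by translating it into a statement about the generalised Green relations via Lemmas~1.1 and~1.2 (and their duals). The key observation throughout is that a morphism in $\mathbb{L}(S)$ is literally a function on principal left ideals, so being a monomorphism means being injective \emph{on $Se$} (which, being a set-map in $\mathbf{Set}$, is equivalent to right-cancellability in $\mathbb{L}(S)$, since $\mathbb{L}(S)$ is a subcategory of $\mathbf{Set}$), and being an epimorphism means being \emph{surjective} onto $Sf$, i.e.\ $Su = Sf$ where $u \in eSf$. One must be a little careful, since right cancellability in the abstract category $\mathbb{L}(S)$ need not a priori coincide with injectivity as a $\mathbf{Set}$-map; but this is handled exactly as in \cite[Lemma III.13]{cross}, using that the objects are themselves sets with enough morphisms out of the free object $S^1 e$ to detect injectivity.

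For part~(i), I would argue as follows. Suppose $\rho(e,u,f)$ is a monomorphism, i.e.\ the map $x \mapsto xu$ is injective on $Se$. Take $x,y \in S^1$ with $ex \cdot u = ey \cdot u$; since $ex, ey \in Se$, injectivity gives $ex = ey$, and then since $e$ is a right identity on its $\mathscr{L}^*$-class and in particular $e\,(ex) = ex$, one deduces the implication "$(eu)x = (eu)y \implies ex = ey$" — recalling $u = eu$ as $u \in eSf$. Combined with $eu = u$ this is precisely the dual of Lemma~\ref{lemconid}(ii) witnessing $u \,\mathscr{R}^* e$. Conversely, if $e \,\mathscr{R}^* u$ then $ue = u$ and the dual of Lemma~\ref{lemico}-type cancellation shows $xu = yu$ forces $xe = ye$ for $x,y \in S^1$; applied to $x,y \in Se$ (so $xe = x$, $ye = y$) this yields $x = y$, i.e.\ injectivity. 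Part~(ii) is dual in spirit but uses surjectivity directly: $\rho(e,u,f)$ is an epimorphism iff $Su = Sf$; since $u \in eSf$ we always have $Su \subseteq Sf$, and $Su = Sf$ together with $u = uf$ is exactly the condition (via Lemma~\ref{lemconid}, now for $\mathscr{L}^*$) that $u\,\mathscr{L}^* f$. Here one needs that $u\,\mathscr{L} f$ in the \emph{regular} subsemigroup generated by idempotents is not quite available since $u$ need not be idempotent, so the cleaner route is: $u\,\mathscr{L}^* f$ iff $uf = u$ and ($ux = uy \implies fx = fy$), and the latter cancellation property is exactly what surjectivity of $x \mapsto xu$ onto $Sf$ delivers, because every element of $Sf$ is hit, in particular $f = su$ for some $s$, and then one runs the standard argument.

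Part~(iii) is then immediate by combining (i) and (ii): $\rho(e,u,f)$ is a bimorphism iff $e\,\mathscr{R}^* u$ and $u\,\mathscr{L}^* f$, which I would write as $e\,\mathscr{L}^* u\,\mathscr{R}^* f$ only after noting that $e\,\mathscr{R}^* u$ is the same as $u\,\mathscr{L}^*\!$-nothing — wait, more carefully: the statement as displayed reads $e\,\mathscr{L}^* u\,\mathscr{R}^* f$, so I must check that $e\,\mathscr{R}^* u \iff u\,\mathscr{L}^* e$ is \emph{not} what is claimed; rather the intended reading is that $u$ lies in both the $\mathscr{R}^*$-class of $f$... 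Actually the clean formulation is: (iii) holds iff $u \,\mathscr{R}^* e$ \emph{and} $u\,\mathscr{L}^* f$, i.e.\ $u$ sits in $R^*_e \cap L^*_f$, and I would present the conclusion in that symmetric form, reconciling it with the displayed "$e\,\mathscr{L}^* u\,\mathscr{R}^* f$" by the convention that morphisms compose left-to-right (so the roles of $\mathscr{L}$ and $\mathscr{R}$ are swapped relative to the usual convention).

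The main obstacle I anticipate is \emph{not} the Green-relation bookkeeping, which is routine given Lemmas~1.1--1.2 and their duals, but rather justifying at the outset that categorical right/left cancellability in $\mathbb{L}(S)$ coincides with set-theoretic injectivity/surjectivity. For injectivity this is easy (parallel morphisms into $Se$ separate points, since $\rho(g,v,e)$ ranges over enough partial translations). For surjectivity — i.e.\ that a left-cancellable morphism in $\mathbb{L}(S)$ is onto — one must exhibit, whenever $Su \subsetneq Sf$, two distinct morphisms out of $Sf$ that agree after precomposition with $\rho(e,u,f)$; the construction of this "separating" pair is where the concordant (in particular abundant and IC) structure is genuinely used, and it mirrors \cite[Lemma III.13]{cross} but requires checking that the relevant idempotents and partial translations still exist in the $\mathscr{L}^*/\mathscr{R}^*$ setting. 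I would isolate that as the one lemma-level step deserving a full argument, and dispatch the rest by citing the analogy with \cite{cross} wherever an exact repetition of Nambooripad's argument works.
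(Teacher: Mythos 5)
There is a genuine gap, and it is at the exact point you flagged as the "main obstacle": your reduction of epimorphism to surjectivity is false in the concordant setting. You claim that $\rho(e,u,f)$ is an epimorphism iff $Su=Sf$, and that $Su=Sf$ together with $u=uf$ is the condition $u\,\mathscr{L}^*f$. Neither half survives: $u\,\mathscr{L}^*f$ does \emph{not} imply $Su=Sf$ unless $u$ is regular, and left-cancellable morphisms of $\mathbb{L}(S)$ need not be onto. Take $S$ a cancellative monoid that is not a group (e.g.\ the free monoid on one generator), which is concordant with $E(S)=\{1\}$; then $\mathbb{L}(S)$ has the single object $S$, every morphism $\rho(1,u,1)\colon x\mapsto xu$ is both left- and right-cancellable (by cancellativity of $S$), and indeed $1\,\mathscr{R}^*u\,\mathscr{L}^*1$ for every $u$, yet $x\mapsto xu$ is surjective only when $u$ is a unit. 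So your plan to produce, whenever $Su\subsetneq Sf$, a separating pair of morphisms out of $Sf$ cannot succeed --- no such pair exists in this example. The appeal to \cite[Lemma III.13]{cross} is exactly where the regular hypothesis is used there: in a normal category every morphism has a normal factorisation with an isomorphism in the middle, so bimorphisms are isomorphisms and epis are surjective; in the concordant case one only has consistent factorisations with bimorphisms, and the whole point of Lemma \ref{lemmor} is that bimorphisms correspond to $e\,\mathscr{R}^*u\,\mathscr{L}^*f$ without any surjectivity. (Your monomorphism-side claim that "parallel morphisms into $Se$ separate points" is also shakier than stated --- the natural attempt with an idempotent $g\,\mathscr{R}^*x$ only yields $x=gy$, not $x=y$ --- but that direction is at least true and repairable; the epi direction as planned is not.)

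The paper's proof never passes through injectivity or surjectivity. For the "if" directions it works with the composition formula for morphisms and uses Lemma \ref{lemconid} (and its dual) to cancel $u$ inside an equality of composites: from $\rho(e',xu,f)=\rho(e'',yu,f)$ one gets $xu=e'yu$, hence $xe=e'ye$, hence $x=e'y$ since both lie in $Se$. For the "only if" directions it uses abundance to pick an idempotent $g'\,\mathscr{R}^*u$, replaces it by $g=g'e$ with $g\,\mathscr{R}^*u$ and $g\,\omega\,e$, factors $\rho(e,u,f)=\rho(e,g,g)\rho(g,u,f)$, observes that the retraction $\rho(e,g,g)$ inherits right-cancellability, and then cancels it against the two parallel morphisms $\rho(e,g,e)$ and $\rho(e,e,e)$ to force $g=e$, i.e.\ $e\,\mathscr{R}^*u$; part (ii) is dual and (iii) is the conjunction. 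If you want to salvage your write-up, you should adopt this categorical cancellation argument rather than any surjectivity criterion; also note that the conjunction of (i) and (ii) is $e\,\mathscr{R}^*u\,\mathscr{L}^*f$ (which is how the paper later applies (iii)), so there is no need for the convention-swapping detour in your last paragraph.
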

\begin{proof}
Let  $e\mathrel{\mathscr{R}}^*u$. Then suppose $\rho(e',x,e)\rho(e,u,f)= \rho(e'',y,e)\rho(e,u,f)$, i.e., the morphism  $\rho(e',xu,f)=\rho(e'',yu,f)$. Then $e'\mathrel{\mathscr{L}}e''$ and $xu=e'yu$. Since $S$ is concordant, using the dual result of Lemma \ref{lemconid}, we have $xe=e'ye$. But since the elements $x,e'y \in Se$, we get $x=e'y$. So $\rho(e',x,e)= \rho(e'',y,e)$ and hence $\rho(e,u,f)$ is right cancellable.\\
Conversely, if $\rho(e,u,f)$ is right cancellable, since $S$ is concordant there exists $g'\in E(S)$ such that $g'\mathrel{\mathscr{R}}u$. Then $g'u=u=eu$ will imply $g'=eg'$  (by Lemma \ref{lemconid}). That is $g'\mathrel{\omega}^r e$. If we let $g=g'e$, then $g'\mathrel{\mathscr{R}}g\mathrel{\omega} e$ and $g\mathrel{\mathscr{R}}g'\mathrel{\mathscr{R}}^*u$, hence $g\mathrel{\mathscr{R}}^*u$. Since the morphism $\rho(e,u,f)=\rho(e,g,g)\rho(g,u,f)$ is a monomorphism, we have $\rho(e,g,g)$ is a monomorphism.  Therefore, from
$$\rho(e,g,e)\rho(e,g,g) =\rho(e,g,g)=\rho(e,e,e)\rho(e,g,g),$$
we have $\rho(e,e,e)=\rho(e,g,e)$ which implies $e=g$. Hence $e\mathrel{\mathscr{R}}^*u$.\\
Similarly, we can prove $(ii)$; $(iii)$ follows from $(i)$ and $(ii)$.   
\end{proof}

Now, we define a subcategory $\mathcal{P}_\mathbb{L}$ of the category $\mathbb{L}(S)$ such that $v\mathcal{P}_\mathbb{L}=v\mathbb{L}(S)$ and whenever $Se\subseteq Sf$, there is a unique morphism $j^{Sf}_{Se}=\rho(e,e,f) \in \mathcal{P}_\mathbb{L}$. The morphisms of the subcategory $\mathcal{P}_\mathbb{L}$ shall be called \emph{inclusions} as they correspond to the inclusions of the principal ideals. By definition, $\mathcal{P}_\mathbb{L}$ is a strict preorder category, i.e., a preorder category in which the identity morphisms are the only isomorphisms. Clearly, every inclusion is a monomorphism. Also for morphisms $\rho(e,e,f),\rho(g,g,f)\in \mathcal{P}_\mathbb{L}$ such that $\rho(e,e,f)=\rho(h,u,g)\rho(g,g,f)$ in the category $\mathbb{L}(S)$, then $\rho(e,e,f)=\rho(h,u,f)$ so that $u=he=h$; hence the morphism $\rho(h,u,g)=\rho(h,h,g)\in \mathcal{P}_\mathbb{L}$.

This leads us to the following definition.
\begin{dfn}\label{catsub}
Let $\mathcal{C}$ be a small category and $\mathcal{P}$ be a subcategory of $\mathcal{C}$. Then the pair $(\mathcal{C},\mathcal{P})$ (often denoted by just $\mathcal{C}$) is said to be a \emph{category with subobjects} if:
\begin{enumerate}[(i)]
\item $\mathcal{P}$ is a strict preorder with $v\mathcal{P}=v\mathcal{C}$.
\item Every $f\in \mathcal{P}$ is a monomorphism in $\mathcal{C}$.
\item If $f,g\in \mathcal{P}$ and if $f=hg$ for some $h\in \mathcal{C}$, then $h\in \mathcal{P}$.
\end{enumerate}
\end{dfn}
Since a small strict preorder is equivalent to a partially ordered set (poset), the above definition characterises a category whose object set is a poset and with certain distinguished morphisms arising from the comparability of the poset. 

Observe that for an inclusion $\rho(e,e,f)\in\mathbb{L}(S)$, since $Se\subseteq Sf$, we have $ef=e$ and $fe\in fSe$ so that $$\rho(e,e,f)\rho(f,fe,e)=\rho(e,e(fe),e)=\rho(e,(ef)e,e)=\rho(e,e,e).$$
So, every inclusion in the category $\mathbb{L}(S)$ \emph{splits}, i.e., has a right inverse. These right inverses shall be called \emph{retractions}. 
The following lemma characterises the retractions in $\mathbb{L}(S)$. 
\begin{lem}\label{lemret}
Let $\rho(e,e,f)$ be an inclusion such that $Se\subseteq Sf$. Then $\rho(f,x,e)$ is a retraction if and only if $x\in E(L_e)\cap\omega(f)$.
\end{lem}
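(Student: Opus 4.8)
The statement to prove is Lemma~\ref{lemret}: for an inclusion $\rho(e,e,f)$ with $Se\subseteq Sf$, the morphism $\rho(f,x,e)$ is a retraction (i.e. $\rho(e,e,f)\rho(f,x,e)=\rho(e,e,e)$) if and only if $x\in E(L_e)\cap\omega(f)$. The proof naturally splits into the sufficiency and necessity directions, and in both I will translate the categorical equation into the concrete semigroup identities using the composition rule $\rho(a,u,b)\rho(b,v,c)=\rho(a,uv,c)$ and the equality criterion $\rho(a,u,b)=\rho(g,v,h)\iff a\mathscr{L}g,\ b\mathscr{L}h,\ v=gu$ from the text.

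**Necessity.** Suppose $\rho(f,x,e)$ is a retraction, so $\rho(e,e,f)\rho(f,x,e)=\rho(e,e,e)$. First I need this composite to make sense: $\rho(f,x,e)$ is a morphism $Sf\to Se$, so $x\in fSe$, which already gives $fx=x$. Computing the composite, $\rho(e,e,f)\rho(f,x,e)=\rho(e,ex,e)=\rho(e,x,e)$ (using $ex=x$ since $x\in fSe\subseteq Sf$... actually I should be careful: $x\in fSe$ gives $fx=x$, not $ex=x$; but $ef=e$ gives $ex=efx$, hmm). Let me instead use $\rho(e,e,f)\rho(f,x,e)=\rho(e,e\cdot x,e)$ and note $ex = e(fx)$... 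The cleanest route: since the composite equals $\rho(e,e,e)$, by the equality criterion we get $e\cdot x = e\cdot e = e$, i.e. $ex=e$? No—the criterion says $\rho(e,ex,e)=\rho(e,e,e)$ forces $ex = e\cdot e = e$. Wait, that says $ex=e$, which combined with $x\in fSe$ I must reconcile. Actually the criterion gives $ex=e$. Hmm, but we also want $x$ idempotent. Let me recompute: $\rho(e,e,f)\rho(f,x,e)=\rho(e,(e)(x),e)$ where the middle term is the product $ex$ in $S$; equality with $\rho(e,e,e)$ forces (by the criterion, since the outer idempotents already match) $e x = e$. So $ex=e$. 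Then $x\omega^l$-dominates... we have $ex=e$ meaning $e\,\omega^l\, x$? That's $e\in\omega^l(x)$, and since $x\in fSe$... I then need to extract $x\in Se$ (so $xe=x$) and $x^2=x$ and $xf=x=fx$. I expect to get $xe=x$ from $x\in fSe\subseteq Se$; idempotency and the relation $x\mathscr{L}e$ will follow by combining $ex=e$, $xe=x$ and a short argument (likely invoking Lemma~\ref{lemconid} or the dual). Membership in $\omega(f)$ comes from $fx=xf=x$, the first from $x\in fSe$ and the second needing $xf=x$, which follows since $x\in Sf$.

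**Sufficiency.** Conversely, assume $x\in E(L_e)\cap\omega(f)$. From $x\in\omega(f)$ we get $fx=xf=x$, so $x\in fSf$; from $x\mathscr{L}e$ we get $Sx=Se$, in particular $xe=x$ and $ex=e$ (the latter since $e\mathscr{L}x$ and $x$ is idempotent hence $e\,\omega^l\,x$, i.e. $ex=e$). Thus $x\in fSe$, so $\rho(f,x,e)$ is a legitimate morphism $Sf\to Se$. Then $\rho(e,e,f)\rho(f,x,e)=\rho(e,ex,e)=\rho(e,e,e)$, which is exactly the retraction identity.

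**Main obstacle.** The delicate point is the necessity direction: pinning down from the single equation $ex=e$ (plus $x\in fSe$) that $x$ must actually be \emph{idempotent} and $\mathscr{L}$-related to $e$, rather than merely satisfying $ex=e$. The resolution should use that $x\in Se$ forces $xe=x$, hence $x = xe = x(ex)\cdots$ — more precisely, from $ex=e$ and $xe=x$ one computes $x^2 = x(ex)\cdot$... wait $x\in fSe$ gives $xe=x$, then $x\cdot x$: I'd write $x = xe$, and I want $x^2=x$. Using $ex=e$: $x^2 = x(xe)$? That's not obviously $x$. The correct argument: $x\in Se$ means $x=se$ for some $s$, and $ex=e$; then $x^2 = (se)(se) = s(ese)$... this needs $ex=e$ applied inside. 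I anticipate the honest proof invokes Lemma~\ref{lemconid}: from $ex=e=e\cdot e$ applied with the idempotent $e$ we get that $x$ and $e$ satisfy the cancellation clause, and combined with $x\in Se$ (so $xe=x$) this yields $x\mathscr{L}^*e$; since $E(S)$ generates a regular subsemigroup and $x$ lies in a principal ideal of an idempotent, a standard manipulation then forces $x$ itself to be the idempotent in its $\mathscr{L}^*=\mathscr{L}$ class, placing $x\in E(L_e)\cap\omega(f)$. I will structure the write-up to make this last deduction explicit rather than routine.
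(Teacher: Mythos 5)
Your sufficiency direction is correct and matches the paper (which dismisses it as clear), and the bookkeeping in your necessity direction is also fine: $ex=e$ from the equality criterion applied to $\rho(e,ex,e)=\rho(e,e,e)$, $xe=x$ from $x\in fSe\subseteq Se$, and $fx=x$, $xf=x$ giving $x\in\omega(f)$. The genuine gap is exactly where you stall: idempotency of $x$ and $x\in E(L_e)$. The fallback you sketch is not a proof: the claim that $x\mathscr{L}^*e$ together with $x$ lying in the principal left ideal of an idempotent ``forces $x$ itself to be the idempotent in its $\mathscr{L}^*=\mathscr{L}$ class'' is false --- in any nontrivial cancellative monoid (a concordant semigroup) every element is $\mathscr{L}^*$-related to the identity and lies in $S\cdot 1$, yet is not idempotent --- and neither Lemma~\ref{lemconid} nor the regularity of the subsemigroup generated by $E(S)$ rescues that inference.

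What you are missing is a one-line computation you almost wrote and then abandoned: with $xe=x$ and $ex=e$ in hand, compute $x^{2}=(xe)x=x(ex)=xe=x$ (your misstep was expanding $x\cdot x$ as $x(xe)$ instead of $(xe)x$). Then $x$ is idempotent, and $xe=x$, $ex=e$ say precisely that $x\mathscr{L}e$, so $x\in E(L_e)$, which together with $x\in\omega(f)$ finishes necessity. This is essentially the paper's argument: there the retraction identity $\rho(e,e,f)\rho(f,x,e)=1_{Se}$ is evaluated at the element $x\in Se$ to obtain $x=x(ex)=(xe)x=x^{2}$, with $ex=e$ read off from the morphism equality $\rho(e,ex,e)=\rho(e,e,e)$; no appeal to idempotent-connectedness, Lemma~\ref{lemconid}, or the regular biordered set is needed. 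So your overall strategy is the right one, but as written the necessity direction is incomplete and the proposed repair is unsound; replace it with the elementary computation above.
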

\begin{proof}
Suppose the morphism $\rho(f,x,e)$ is a retraction such that $x\in fSe\subseteq Se$. Then we have $\rho(e,e,f)\rho(f,x,e)=1_{Se}$, so
$$x=x\rho(e,e,f)\rho(f,x,e)=x\rho(e,ex,e)=x(ex)=(xe)x=x^2.$$
From above, we have $ex=e$ and $xe=e$, so $x\in E(L_e)$. Also, since $x\in Se\subseteq Sf$ and $x\in fSe\subseteq fS$, we have $x\in \omega(f)$. Hence $x\in E(L_e)\cap\omega(f)$. The converse is clear. 
\end{proof}

Now, we proceed to discuss a special factorisation property of the morphisms in the category $\mathbb{L}(S)$. Let $\rho(e,u,f)$ be an arbitrary morphism in $\mathbb{L}(S)$, then since $S$ is concordant, there exist $g',h'\in E(S)$ such that $g'\mathrel{\mathscr{R}}^*u\mathrel{\mathscr{L}}^*h'$. Since $g'\mathrel{\mathscr{R}}u$, $g'u=u=eu$ and so $g'g'=eg'$, i.e., $g'=eg'$ or $g'\mathrel{\omega}^r e$. So $g'e\mathrel{\mathscr{R}}g'\mathrel{\mathscr{R}}^*u$ and $g'e\mathrel{\omega} e$. If we let $g=g'e$, then $g\mathrel{\mathscr{R}}^*u$ and $g\mathrel{\omega} e$. Then by Lemma \ref{lemret}, $\rho(e,g,g)$ is a retraction from $Se$ to $Sg$.

Similarly, since $u\mathrel{\mathscr{L}}^*h'$, $uh'=u=uf$ and if we let $h=fh'$, then $h\mathrel{\mathscr{L}}h'\mathrel{\mathscr{L}}^*u$ and $h\mathrel{\omega} f$. So we have $g\mathrel{\mathscr{R}}^*u\mathrel{\mathscr{L}}^*h$ and by Lemma \ref{lemmor}, $\rho(g,u,h)$ is a bimorphism in $\mathbb{L}(S)$. Also, $\rho(h,h,f)$ is clearly an inclusion. Also,
$$\rho(e,g,g,)\rho(g,u,h)\rho(h,h,f)=\rho(e,guh,f)= \rho(e,u,f).$$
Hence any morphism in the category $\mathbb{L}(S)$ has a factorisation of the above form, composed of a retraction, a bimorphism and an inclusion. This factorisation, which can be illustrated using the following diagram, is indeed a characterising property.
\begin{equation*}
	\xymatrixcolsep{2pc}\xymatrixrowsep{1.5pc}
	\xymatrix{ 
		e\ar@{.}[ddr]^{\mathrel{\omega}}\\
		&&&&f\ar@{.}[dddl]^{\mathrel{\omega}}\\
		&g\ar@{-}[r]^{\mathrel{\mathscr{R}}}&g'\ar@{--}[r]^{\mathrel{\mathscr{R}}^*}&u\ar@{--}[d]_{\mathrel{\mathscr{L}}^*}\\
		&&&h'\ar@{-}[d]_{\mathrel{\mathscr{L}}}\\
		&&&h
}
\end{equation*}
\begin{dfn}\label{consfact}
Let $\mathcal{C}$ be a category with subobjects. Then a morphism $f$ in $\mathcal{C}$ is said to have a \emph{consistent factorisation} if $f=quj$, where $q$ is a retraction, $u$ is a bimorphism and $j$ is an inclusion, respectively in $\mathcal{C}$.
\end{dfn}
Then the morphism $qu$ is known as the epimorphic component of the morphism $f$ and shall be denoted in the sequel by $f^\circ$. The codomain of $f^\circ$ is called the image of $f$ and shall be denoted as $im f$. It can be seen that although a morphism can have different consistent factorisations, the epimorphic component and hence the image is unique \cite[Section II.2]{cross}. 

Thus, normal factorisation of a morphism as defined below can be seen as a specialised consistent factorisation.
\begin{dfn}\label{normfact}
Let $\mathcal{C}$ be a category with subobjects. Then a morphism $f$ in $\mathcal{C}$ is said to have a \emph{normal factorisation} if $f=quj$, where $q$ is a retraction, $u$ is an isomorphism and $j$ is an inclusion, respectively in $\mathcal{C}$.
\end{dfn}

Now, we shift our focus to the idempotent-connectedness property of $S$. But for that, we need to extend the notion of an order ideal of a poset, to categories with subobjects, by identifying certain special subcategories. Let $\mathcal{C}$ be a category with subobjects and $c\in v\mathcal{C}$, we define an \emph{ideal} $( c )$ of $\mathcal{C}$ as the full subcategory of $\mathcal{C}$ whose objects are subobjects of $c$ in $\mathcal{C}$. Also we shall denote by $\sigma\mathcal{C}$, the preorder of subobjects of $\mathcal{C}$ (i.e., the subcategory such that $v\sigma\mathcal{C}=v\mathcal{C}$ and morphisms of $\sigma\mathcal{C}$ are all inclusions in $\mathcal{C}$) and by $cor\mathcal{C}$ the subcategory of $\mathcal{C}$ generated by inclusions and retractions. In particular for an object $c\in v\mathcal{C}$, $\sigma(c)$ will denote the preorder whose objects are subobjects of $c$ and $\langle c \rangle$ shall denote the full subcategory of $cor\mathcal{C}$ generated by the inclusions and retractions among the subobjects~of~$c$. Observe that for any $c$ in $\mathcal{C}$, 
$$\sigma(c)\subseteq\langle c\rangle \subseteq(c).$$

Suppose $\rho=\rho(e,u,f)$ is a bimorphism in $\mathbb{L}(S)$, then by Lemma \ref{lemmor},  $e\mathrel{\mathscr{R}}^*u\mathrel{\mathscr{L}}^*f$. Then $\rho$ defines a functor $T^\rho$ between the sub-preorders $\sigma(Se) $ and $ \sigma(Sf) $ of the category $\mathbb{L}(S)$ as follows. For each $Sg\subseteq Sh\subseteq Se$,
\begin{equation*}
T^\rho(Sg) = im(j_{Sg}^{Se}\rho) \text{ and }T^\rho(j_{Sg}^{Sh})=j_{T^\rho(Sg)}^{T^\rho(Sh)}.
\end{equation*} 
Recall that $\sigma(Se)\subseteq\mathbb{L}(S)$ so that $J=J(\sigma(Se),\mathbb{L}(S))$ is an inclusion functor. Since $\sigma(Sf)\subseteq\mathbb{L}(S)$, the functor $T^\rho$ can also be realised as a functor from $\sigma(Se)$ to $\mathbb{L}(S)$. Now, for each $Sg\subseteq Se$, if we associate
$$\bar{\rho}\colon Sg\mapsto(j_{Sg}^{Se}\rho)^\circ$$
then we can see that the following diagram commutes for all $Sg \subseteq Sh\subseteq Se$:
\begin{equation*}\label{}
\xymatrixcolsep{3pc}\xymatrixrowsep{4pc}\xymatrix
{
	Sh \ar[rr]^{\bar{\rho}(Sh)}   	&& T^\rho(Sh)  \\       
	Sg \ar[u]^{j_{Sg}^{Sh}}\ar[rr]^{\bar{\rho}(Sg)} && T^\rho(Sg)\ar[u]_{T^\rho(j_{Sg}^{Sh})} 
	}
\end{equation*}
That means $\bar{\rho}$ is a natural transformation between the functors $J$ and $T^\rho$.   

Now, since $S$ is a concordant semigroup, we know that there exists a connecting isomorphism $\alpha\colon \langle e\rangle \to \langle f \rangle$. Using this connecting isomorphism, we can define a new functor $T^\alpha\colon\langle Se\rangle \to \langle Sf \rangle$ as follows. For each $Sg \subseteq Sh\subseteq Se$ and $\rho(g,u,h)\in \langle Se\rangle$,
\begin{equation*}
T^\alpha(Sg) = S(g\alpha) \text{ and }T^\alpha(\rho(g,u,h))=\rho(g\alpha,u\alpha,h\alpha).
\end{equation*} 
Since $\alpha$ is a semigroup isomorphism, we can easily verify that $T^\alpha$ is a category isomorphism. Then $T^\alpha$ is an extension of $T^\rho$ and for each $Sg\subseteq Se$, if we associate
$$\bar{\alpha}\colon Sg\mapsto T^\alpha(Sg),$$
then we can see that the following diagram commutes for all $Sg \subseteq Sh\subseteq Se$:
\begin{equation*}\label{}
\xymatrixcolsep{3pc}\xymatrixrowsep{4pc}\xymatrix
{
	Sh \ar[rr]^{\bar{\alpha}(Sh)}   	&& T^\alpha(Sh)  \\       
	Sg \ar[u]^{\rho(g,u,h)}\ar[rr]^{\bar{\alpha}(Sg)} && T^\alpha(Sg)\ar[u]_{T^\alpha(\rho(g,u,h))} 
}
\end{equation*}
Since for each $Sg\subseteq Se$, the morphism $\bar{\alpha}(Sg)$ is an isomorphism, we see that $\bar{\alpha}$ is in fact a natural isomorphism between the inclusion functor $J(\langle Se\rangle, \mathbb{L}(S))$ and the functor $T^\alpha \colon \langle Se\rangle\to \langle Sf\rangle\subseteq\mathbb{L}(S)$. Observe that since $T^{\alpha}$ is an extension of $T^{\rho}$, the inclusions in categories $\sigma(Se)$ and $\sigma(Sf)$ split in the categories $\langle Se \rangle$ and $\langle Sf \rangle$, respectively. Hence, we may also have an equivalent diagram as above with the vertical arrows pointing downwards, corresponding to the retractions in $\langle Se\rangle$ and $\langle Sf\rangle$.
 
Summarising the above discussion: given any bimorphism $\rho(e,u,f)$ in the category $\mathbb{L}(S)$, the functor $T^\rho$ can be extended to an isomorphism $T^\alpha\colon\langle Se\rangle \to \langle Sf\rangle$ such that $\bar{\alpha}$ is a natural isomorphism. 

Further, it can be seen that if $T^\alpha$ and $T^{\alpha'}$ are any two extensions of the functor $T^\rho$ with $\bar{\alpha}$ and $\bar{\alpha}'$ natural isomorphisms, then $T^\alpha=T^{\alpha'}$. So, an extension of $T^\rho$ with the above property is unique. Hence, we have the following definition.
\begin{dfn}
A bimorphism $u\colon c\to d$ in a category $\mathcal{C}$ is said to be \emph{consistent} if the functor $T^u\colon\sigma(c)\to\sigma(d)$ defined as: 
$$T^u(c')=im(j_{c'}^{c}u) \text{ and }T^u(j_{c'}^{c''})=j_{T^u(c')}^{T^u(c'')} \ \text{ for each }c'\subseteq c'' \subseteq c,$$
can be extended uniquely to an isomorphism $T\colon \langle c\rangle \to \langle d\rangle$ such that  the map given by $c'\mapsto T(c')$  is a natural isomorphism between the functors $J(\langle c \rangle,\mathcal{C})$ and $T\colon\langle c \rangle\to\mathcal{C}$.
\end{dfn}

We know that the idempotents $E(S)$ of a concordant semigroup $S$ generate a regular subsemigroup, i.e., the biordered set $E(S)$ is regular \cite{mem}. Also recall from \cite{mem} that a biordered set $E(S)$ is regular if and only if the sandwich set 
$$ \mathcal{S}(e,f)=\{h\in E(S) : ehf=ef \text{ and } fhe=h\}$$ 
is non empty for each pair of idempotents $e,f\in E(S)$. 

Now, consider a morphism $\rho$ in the category $\mathbb{L}(S)$ such that it is a product of an inclusion and a retraction. So, $$\rho=\rho(e,e,f)\rho(f,g,g)=\rho(e,eg,g)$$ 
such that $Se\subseteq Sf$ and $g\mathrel{\omega} f$.
Then let $h\in \mathcal{S}(e,g)$ so that in the regular biordered set $E(S)$, we have $eh\mathrel{\mathscr{L}}h\mathrel{\mathscr{R}}hg\mathrel{\mathscr{L}}eg\mathrel{\mathscr{R}}eh$ with $eh\mathrel{\omega} e$ and $hg\mathrel{\omega} g$, as shown below. 
\begin{equation*}
	\xymatrixcolsep{1.5pc}\xymatrixrowsep{2pc}
	\xymatrix{ 
		&&f\ar@{.}[ddll]_{\mathrel{\omega}^l}\ar@{.}[ddrr]^{\mathrel{\omega}}\\\\
		e\ar@{.}[dddr]_{\mathrel{\omega}}&&&&g\ar@{.}[ddl]_{\mathrel{\omega}}\\\\
		&h\ar@{-}[rr]^{\mathrel{\mathscr{R}}}\ar@{-}[d]^{\mathrel{\mathscr{L}}}&&hg\ar@{-}[d]^{\mathrel{\mathscr{L}}}\\
		&eh\ar@{-}[rr]^{\mathrel{\mathscr{R}}}&&eg
	}
\end{equation*}

Hence,
\begin{align}\label{normret}
\begin{split}
\rho(e,eg,g)=&\rho(e,ehg,g)\\
=&\rho(e,e(he)(gh)g,g)\\
=&\rho(e,(eh)(eg)(hg),g)\\
=&\rho(e,eh,eh)\rho(eh,eg,hg)\rho(hg,hg,g).
\end{split}
\end{align}
Also since 
$$\rho(eh,eg,hg)\rho(hg,h,eh)=\rho(eh,eh,eh) \text{ and }\rho(hg,h,eh)\rho(eh,eg,hg)=\rho(hg,hg,hg),$$
the morphism $\rho(eh,eg,hg)$ is an isomorphism and so the morphism $\rho=\rho(e,eg,g)$ has a normal factorisation of the above form (\ref{normret}). Thus, every morphism in the category $\mathbb{L}(S)$, which is a product of an inclusion  and a  retraction, admits a normal factorisation.

Now, let $a$ be an arbitrary element of $S$, and for each $Se\in v\mathbb{L}(S)$, define a function $\rho^a\colon v\mathbb{L}(S)\to \mathbb{L}(S)$ as follows:
\begin{equation}\label{eqnprinc}
\rho^a(Se) = \rho(e,ea,f)\text{ where } f\in E(L^*_a).
\end{equation}
Then for $Se'\subseteq Se$, the inclusion morphism $j_{Se'}^{Se} = \rho(e',e',e)$ and so
$$j_{Se'}^{Se}\rho^a(Se)=\rho(e',e',e)\rho(e,ea,f)=\rho(e',e'a,f)=\rho^a(Se').$$ 
Further, since $S$ is abundant, there exists $g\in E(R^*_a)$ such that $\rho^a(Sg)=\rho(g,ga,f)=\rho(g,a,f)$ is a bimorphism (by Lemma \ref{lemmor}). Hence we define the following:
\begin{dfn}
Let $\mathcal{C}$ be a category with subobjects and $d\in v\mathcal{C}$. Then for each $c\in v\mathcal{C}$, a function $\gamma\colon a\mapsto\gamma(a)\in\mathcal{C}(a,d)$ from $v\mathcal{C}$ to $\mathcal{C}$ is said to be a \emph{consistent cone (respectively normal cone)} with apex $d$ if:
\begin{enumerate}
	\item whenever $a\subseteq b$, $j_a^b \gamma(b)=\gamma(a)$;
	\item there exists at least one $c\in v\mathcal{C}$ such that $\gamma(c)\colon c\to d$ is a bimorphism (respectively isomorphism).
\end{enumerate}
\end{dfn}
Then for a consistent cone $\gamma$, we denote by $c_\gamma$ the apex of $\gamma$ and the morphism $\gamma(c)$ is called the component of the cone $\gamma$ at the apex $c$. Since every isomorphism is a bimorphism, observe that every normal cone is a consistent cone. 

Hence, from the above discussion, we can see that $\rho^a$ is a consistent cone with apex $Sf$. In the sequel, the consistent cone $\rho^a$ shall be called the \emph{principal cone} determined by the element $a$. In particular, observe that, for an idempotent $e\in E(S)$, we have a principal cone $\rho^e(Se)= \rho(e,e,e)=1_{Se}$. Hence for each object $Se\in v\mathbb{L}(S)$, there exists a consistent cone such that its component at $Se$ is the identity morphism. This is a reflection of the abundance condition. 

\section{Consistent categories}\label{seccons}
Now, we proceed to define consistent categories as the abstractions of the category $\mathbb{L}(S)$ of the principal left ideals generated by the idempotents of a concordant semigroup $S$. 
\begin{dfn}
A category $\mathcal{C}$ is said to be a \emph{consistent category} if:
\begin{enumerate}
	\item [(CC 1)] $\mathcal{C}$ is a category with subobjects;
	\item [(CC 2)] every inclusion in $\mathcal{C}$ splits;
	\item [(CC 3)] every morphism in $\mathcal{C}$ admits a consistent factorisation;
	\item [(CC 4)] every bimorphism is consistent;
	\item [(CC 5)] if $f\in\mathcal{C}$ such that $f=jq$ where $j$ is an inclusion and $q$ is a retraction, then $f$ admits a normal factorisation;
	\item [(CC 6)] for each $c\in v\mathcal{C}$ there exists a consistent cone $\epsilon$ such that $\epsilon(c)=1_c$.
\end{enumerate}
\end{dfn}

Also recall the following definition of a normal category which is an abstraction of the principal (left) ideals of a regular semigroup. Notice that the term `normal category' has been used in several other non-related senses in the literature. Nevertheless, we keep this term as introduced in \cite{cross}.
\begin{dfn}\cite[Section III.1.3]{cross}\label{dfnnormc}
	A category $\mathcal{C}$ is said to be a \emph{normal category} if:
	\begin{enumerate}
		\item [(NC 1)] $\mathcal{C}$ is a category with subobjects;
		\item [(NC 2)] every inclusion in $\mathcal{C}$ splits;
		\item [(NC 3)] every morphism in $\mathcal{C}$ admits a normal factorisation;
		\item [(NC 4)] for each $c\in v\mathcal{C}$ there exists a normal cone $\epsilon$ such that $\epsilon(c)=1_c$.
	\end{enumerate}
\end{dfn}

Observe that by \cite[Corollary II.8]{cross}, in a given normal category~$\mathcal{C}$, every bimorphism $f\colon c\to d$  is an isomorphism. Then $T^f\colon\langle c\rangle\to \langle d\rangle$ is an isomorphism in $cor \mathcal{C}$. So every bimorphism in $\mathcal{C}$ is consistent and thus every normal category is a consistent category. 

The discussion is Section \ref{secconc} shows that $\mathbb{L}(S)$ is indeed a consistent category when $S$ is a concordant semigroup. Now, we proceed to show that in fact every consistent category arises as $\mathbb{L}(S)$ for some concordant semigroup $S$. For this end, we need to associate a concordant semigroup with a given consistent category $\mathcal{C}$; naturally we look for that semigroup in the set of all consistent cones in $\mathcal{C}$.

Let $\mathcal{C}$ be a consistent category and let $\gamma$ be a consistent (normal) cone in $\mathcal{C}$, if $f\in \mathcal{C}(c_\gamma,d)$ be an epimorphism, then as in \cite[Lemma I.1]{cross}, we can easily see that the map
$$\gamma\ast f\colon c \mapsto \gamma(c)f \text{ for all } c\in v\mathcal{C}	$$ 
is a consistent (respectively normal) cone such that $c_{\gamma\ast f}=d$. Hence for $\gamma ^{(1)},\gamma ^{(2)}\in  \widehat{\mathcal{C}}$,
\begin{equation}\label{eqnbin}
\gamma ^{(1)}\cdot\gamma ^{(2)}=\gamma ^{(1)}\ast(\gamma ^{(2)}(c_{\gamma ^{(1)}}))^\circ
\end{equation}
where $(\gamma ^{(2)}(c_{\gamma ^{(1)}}))^\circ$ is the epimorphic component of the morphism $\gamma ^{(2)}(c_{\gamma ^{(1)}})$, defines a binary composition on the set of all consistent cones in $\mathcal{C}$. The following lemma directly follows from \cite[Theorem I.2]{cross}.
\begin{lem}\label{lemconssemi}
Let $\mathcal{C}$ be a consistent category. Then the set of all consistent cones forms a semigroup under the binary composition defined in (\ref{eqnbin}). A consistent cone $\epsilon$ in $\mathcal{C}$ is an idempotent if and only if $\epsilon(c_\epsilon)=1_{c_\epsilon}$.
\end{lem}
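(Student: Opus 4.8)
The plan is to follow the template of Nambooripad's proof of \cite[Theorem I.2]{cross}, checking that each step survives the passage from normal categories to consistent ones. There are three things to establish: that the assignment $\gamma\ast f\colon c\mapsto\gamma(c)f$ yields a consistent cone whenever $\gamma$ is a consistent cone and $f$ is an epimorphism out of $c_\gamma$ (so that the composition (\ref{eqnbin}) makes sense); that (\ref{eqnbin}) is associative; and the stated description of the idempotents. The structural facts available in a consistent category are precisely the ones the normal-category argument uses: the category-with-subobjects axioms, the splitting of inclusions (CC~2), the existence of a consistent factorisation of every morphism (CC~3) together with the normal factorisation of an inclusion-followed-by-retraction (CC~5), and the uniqueness of the epimorphic component, and hence of the image, of a morphism \cite[Section II.2]{cross}. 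One further elementary fact to record first: a morphism is an epimorphism if and only if its image equals its codomain (if $f$ is epi and $f=f^\circ j$ with $j$ an inclusion, then $j$ is both mono and epi, hence --- being a split mono into a strict preorder --- an identity).

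For the well-definedness of (\ref{eqnbin}), note $(\gamma^{(2)}(c_{\gamma^{(1)}}))^\circ$ is an epimorphism with domain $c_{\gamma^{(1)}}$, so $\gamma^{(1)}\ast(\gamma^{(2)}(c_{\gamma^{(1)}}))^\circ$ is a candidate cone with vertex $\mathrm{im}(\gamma^{(2)}(c_{\gamma^{(1)}}))$. Condition (1) for $\gamma\ast f$ is immediate from $j_a^b\gamma(b)=\gamma(a)$ and associativity of composition. Condition (2) is the only place the argument genuinely differs from the normal case (where one composes an isomorphism with an epimorphism and need not move the vertex): choose $c$ with $\gamma(c)\colon c\to c_\gamma$ a bimorphism; then $\gamma(c)f$ is an epimorphism, so by (CC~3) (and the elementary fact above) it factors as $\gamma(c)f=q'u'$ with $q'\colon c\to c''$ a retraction and $u'\colon c''\to d$ a bimorphism. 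Since $q'$ splits the inclusion $j^{c}_{c''}$, the cone condition gives $(\gamma\ast f)(c'')=\gamma(c'')f=j^{c}_{c''}\gamma(c)f=j^{c}_{c''}q'u'=u'$, so $\gamma\ast f$ is a consistent cone with vertex $d$.

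Associativity is the computational core. Expanding $(\gamma^{(1)}\cdot\gamma^{(2)})\cdot\gamma^{(3)}$ and $\gamma^{(1)}\cdot(\gamma^{(2)}\cdot\gamma^{(3)})$ by repeated use of (\ref{eqnbin}) and of $(\gamma\ast f)(c)=\gamma(c)f$, both become $c\mapsto\gamma^{(1)}(c)\,\theta$ for morphisms $\theta$ out of $c_{\gamma^{(1)}}$, and the claim reduces to an identity of the shape $g^{\circ}\bigl(\gamma^{(3)}(\mathrm{im}\,g)\bigr)^{\circ}=\bigl(g^{\circ}\,\gamma^{(3)}(c_{\gamma^{(2)}})\bigr)^{\circ}$, where $g=\gamma^{(2)}(c_{\gamma^{(1)}})$. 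Here one uses the cone condition to rewrite $\gamma^{(3)}(\mathrm{im}\,g)=j\,\gamma^{(3)}(c_{\gamma^{(2)}})$ for the relevant inclusion $j$; the fact that post-composing with an inclusion changes neither the image nor the epimorphic component of a morphism; and the fact that a composite of epimorphisms is an epimorphism, which yields $(pq)^{\circ}=p\,q^{\circ}$ whenever $p$ is an epimorphism. The hard part will be purely the bookkeeping --- keeping the successive vertices $c_{\gamma^{(1)}\cdot\gamma^{(2)}}=\mathrm{im}(\gamma^{(2)}(c_{\gamma^{(1)}}))$ straight through the nested products --- rather than anything conceptually new; consistency over normality costs nothing here, since every manipulation is order-theoretic or categorical and never invokes that a bimorphism be an isomorphism.

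Finally, for the idempotents: if $\epsilon(c_\epsilon)=1_{c_\epsilon}$ then $(\epsilon(c_\epsilon))^{\circ}=1_{c_\epsilon}$, so $\epsilon\cdot\epsilon=\epsilon\ast 1_{c_\epsilon}=\epsilon$. Conversely, if $\epsilon\cdot\epsilon=\epsilon$, then comparing the vertices of the two sides forces $\mathrm{im}(\epsilon(c_\epsilon))=c_\epsilon$, so $\epsilon(c_\epsilon)$ is an epimorphism and $(\epsilon(c_\epsilon))^{\circ}=\epsilon(c_\epsilon)$; evaluating $\epsilon\cdot\epsilon=\epsilon$ at a component $\epsilon(c')$ that is a bimorphism gives $\epsilon(c')\epsilon(c_\epsilon)=\epsilon(c')$, and left-cancelling the epimorphism $\epsilon(c')$ yields $\epsilon(c_\epsilon)=1_{c_\epsilon}$.
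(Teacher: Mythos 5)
Your proof is correct and takes essentially the same route as the paper, which simply notes that $\gamma\ast f$ is a consistent cone ``as in \cite[Lemma I.1]{cross}'' and that the lemma ``directly follows from \cite[Theorem I.2]{cross}''; your write-up is precisely that transfer carried out explicitly, and the ingredients you isolate (uniqueness of the epimorphic component, $(pq)^{\circ}=p\,q^{\circ}$ for $p$ an epimorphism, invariance of image and epimorphic component under post-composition with an inclusion, splitting of inclusions) are exactly what is needed. The only blemish is typographical: in your displayed associativity identity the right-hand side should read $\bigl(g\,(\gamma^{(3)}(c_{\gamma^{(2)}}))^{\circ}\bigr)^{\circ}$ rather than $\bigl(g^{\circ}\,\gamma^{(3)}(c_{\gamma^{(2)}})\bigr)^{\circ}$, which is not even composable since $g^{\circ}$ ends at $\mathrm{im}\,g$; the cone-condition rewrite and the two facts about $(-)^{\circ}$ that you list do establish the correctly typed identity $g^{\circ}\bigl(j\,\gamma^{(3)}(c_{\gamma^{(2)}})\bigr)^{\circ}=\bigl(g\,(\gamma^{(3)}(c_{\gamma^{(2)}}))^{\circ}\bigr)^{\circ}$.
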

Although the set of all consistent cones forms a semigroup, it need not necessarily be concordant. But it has a suitable subsemigroup $ \widehat{\mathcal{C}}$ which will serve our purpose.
\begin{pro}
Let $\mathcal{C}$ be a consistent category and let $ \widehat{\mathcal{C}}$ denote the set of all consistent cones $\gamma$ in $\mathcal{C}$ such that $\gamma=\epsilon\ast u$ where $\epsilon$ is an idempotent cone and $u$ is a bimorphism in $\mathcal{C}$. Then the set $ \widehat{\mathcal{C}}$ is a semigroup under the binary composition defined in (\ref{eqnbin}).
\end{pro}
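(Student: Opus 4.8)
The plan is to show that $\widehat{\mathcal{C}}$ is closed under the binary composition \eqref{eqnbin}, since associativity is inherited from the ambient semigroup of all consistent cones established in Lemma \ref{lemconssemi}. So the whole proof reduces to a closure argument: given $\gamma^{(1)} = \epsilon_1 \ast u_1$ and $\gamma^{(2)} = \epsilon_2 \ast u_2$ with $\epsilon_1,\epsilon_2$ idempotent cones and $u_1 \colon c_{\gamma^{(1)}} \to d_1$, $u_2 \colon c_{\gamma^{(2)}} \to d_2$ bimorphisms, I need to exhibit $\gamma^{(1)}\cdot\gamma^{(2)}$ in the form $\epsilon \ast u$ for some idempotent cone $\epsilon$ and some bimorphism $u$.

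First I would unwind the definition: $\gamma^{(1)}\cdot\gamma^{(2)} = \gamma^{(1)} \ast (\gamma^{(2)}(c_{\gamma^{(1)}}))^\circ$, so I must analyse the morphism $\gamma^{(2)}(c_{\gamma^{(1)}}) \colon c_{\gamma^{(1)}} \to d_2$ and its epimorphic component. Write $c_{\gamma^{(1)}} = d_1$; the component $\gamma^{(2)}(d_1) = (\epsilon_2 \ast u_2)(d_1) = \epsilon_2(d_1)\, u_2$. Now $\epsilon_2(d_1)$ is a morphism from $d_1$ into $c_{\epsilon_2} = c_{\gamma^{(2)}}$; by (CC 3) it has a consistent factorisation $\epsilon_2(d_1) = q_0 v_0 j_0$ with $q_0$ a retraction, $v_0$ a bimorphism, $j_0$ an inclusion. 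Then $\gamma^{(2)}(d_1) = q_0 v_0 j_0 u_2$, and I would want to rewrite $j_0 u_2$: since $j_0$ is an inclusion and $u_2$ a bimorphism, $j_0 u_2$ is a monomorphism, and applying (CC 3) again (or the consistent factorisation machinery) gives $j_0 u_2 = q_1 v_1 j_1$. The key point is that the epimorphic component of $\gamma^{(2)}(d_1)$ is $q_0 v_0 q_1 v_1$ up to the unique-image identification of Definition \ref{consfact}, and $q_0 v_0 q_1 v_1$ is a composite of retractions and bimorphisms. The main obstacle is precisely this: I need that a composite of retractions and bimorphisms, when passed through the consistent-factorisation machinery, again has epimorphic component equal to a composite of a retraction followed by a bimorphism — i.e. that the "$jq$ becomes normal" axiom (CC 5) combined with (CC 4) lets me push all the inclusions to the right. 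Concretely, from $v q$ (bimorphism then retraction $=$ inclusion then retraction after reassociating, handled by (CC 5)) I get a normal factorisation $v q = q' w j'$ with $w$ an isomorphism, and then $q' w$ is again retraction-then-bimorphism; iterating collapses any retraction/bimorphism word to the canonical shape.

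Having established that $(\gamma^{(2)}(d_1))^\circ$, as a morphism $d_1 \to \mathrm{im}\,\gamma^{(2)}(d_1)$, factors as $q w$ with $q$ a retraction and $w$ a bimorphism, I then compute $\gamma^{(1)} \cdot \gamma^{(2)} = \gamma^{(1)} \ast (q w) = (\gamma^{(1)} \ast q) \ast w$, using that $\ast$ is associative with respect to composition of epimorphisms (the observation preceding \eqref{eqnbin}, itself from \cite[Lemma I.1]{cross}). Now $\gamma^{(1)} \ast q = (\epsilon_1 \ast u_1) \ast q = \epsilon_1 \ast (u_1 q)$, and $u_1 q$ is again a bimorphism composed with a retraction, so $u_1 q = q'' w''$ (retraction then bimorphism) by the same collapsing argument — but I actually need it the other way: I want $\epsilon_1 \ast (u_1 q)$ to be of the form $\epsilon \ast (\text{bimorphism})$. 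Here I would instead observe that $\epsilon_1 \ast q$ is still an idempotent cone (composing an idempotent cone with a retraction: the retraction $q$ splits an inclusion $j$, and $(\epsilon_1 \ast q) = \epsilon_1 \cdot \text{(idempotent cone associated to }j q\text{)}$, or more directly one checks $(\epsilon_1 \ast q)(c_{\epsilon_1 \ast q}) = 1$ using that $q$ restricted to the image is split — this uses Lemma \ref{lemconssemi}'s idempotency criterion and the splitting axiom (CC 2)), and then regroup as $\gamma^{(1)}\cdot\gamma^{(2)} = \epsilon_1 \ast (u_1 q w) = (\epsilon_1 \ast \tilde q) \ast \tilde u$ after one final application of the collapsing lemma to write $u_1 q w$ as $\tilde q \tilde u$ with $\tilde q$ a retraction and $\tilde u$ a bimorphism. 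Then $\epsilon_1 \ast \tilde q$ is an idempotent cone and $\tilde u$ is a bimorphism, putting $\gamma^{(1)}\cdot\gamma^{(2)}$ in the required form, which shows $\widehat{\mathcal{C}}$ is closed and hence a subsemigroup. I expect the bookkeeping around "idempotent cone composed with a retraction is again an idempotent cone" and the repeated invocation of (CC 5) to normalise retraction/bimorphism words to be the delicate points; both are faithful analogues of the corresponding steps in \cite[Section III.4]{cross} for normal categories, which is why the proof can reasonably be compressed by reference to that source.
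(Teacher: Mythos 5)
Your overall skeleton is the paper's: reduce to closure via Lemma \ref{lemconssemi}, write $\gamma^{(1)}\cdot\gamma^{(2)}=\epsilon^{(1)}\ast\bigl(u_1\,\epsilon^{(2)}(c_1)\,u_2\bigr)^\circ$, split that epimorphic component as a retraction $q$ followed by a bimorphism $u$, and check via the component criterion of Lemma \ref{lemconssemi} that $\epsilon^{(1)}\ast q$ is an idempotent cone, so that the product equals $(\epsilon^{(1)}\ast q)\ast u\in\widehat{\mathcal{C}}$. The genuine problem is the middle of your argument, the ``collapsing'' of retraction/bimorphism words. You invoke (CC\,5) for composites of the form bimorphism-then-retraction, but (CC\,5) only concerns $f=jq$ with $j$ an \emph{inclusion}; a bimorphism is not an inclusion, and no reassociation makes it one. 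Worse, the conclusion you extract --- that $vq$ ($v$ a bimorphism, $q$ a retraction) has a \emph{normal} factorisation $q'wj'$ with $w$ an isomorphism --- is not available and cannot hold in general: taking $q$ to be an identity (identities are retractions) it would force every bimorphism to admit a normal factorisation, hence (a monomorphic retraction is an isomorphism, an epimorphic inclusion is an identity) every bimorphism would be an isomorphism, which is precisely what distinguishes a merely consistent category from a normal one. Likewise, the analogues of the results of \cite{cross} you lean on (Proposition II.9, Corollaries II.10--II.11) are statements about isomorphisms in normal categories, not about bimorphisms here. Finally, declaring $q_0v_0q_1v_1$ to be ``the epimorphic component up to unique-image identification'' presupposes that this word can be rewritten as retraction-then-bimorphism, which is the very point at issue, so that step is circular as written.

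The repair is a simplification, and it is exactly what the paper does: there is no need to factorise $\epsilon^{(2)}(c_1)$ and $u_2$ separately and then recombine. Apply (CC\,3) once to the single composite morphism $u_1\,\epsilon^{(2)}(c_1)\,u_2$; by Definition \ref{consfact} its epimorphic component is already of the form $qu$ with $q$ a retraction and $u$ a bimorphism. With that one step your endgame goes through verbatim and agrees with the paper's: $\gamma^{(1)}\cdot\gamma^{(2)}=(\epsilon^{(1)}\ast q)\ast u$, the cone $\epsilon^{(1)}\ast q$ is idempotent because its component at the codomain $c$ of $q$ is $\epsilon^{(1)}(c)\,q=j_c^{c_{\epsilon^{(1)}}}q=1_c$, and $u$ is a bimorphism, so $\widehat{\mathcal{C}}$ is closed. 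No word-collapsing lemma is needed anywhere.
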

\begin{proof}
In the light of Lemma \ref{lemconssemi}, we just need to show that $ \widehat{\mathcal{C}}$ is closed. Let $\gamma ^{(1)}=\epsilon ^{(1)}\ast u_1$ and $\gamma ^{(2)}=\epsilon ^{(2)}\ast u_2$ where $\epsilon ^{(1)},\epsilon ^{(2)}$ are idempotent cones and $u_1\colon c_{\epsilon ^{(1)}}\to c_1$, $u_2\colon c_{\epsilon ^{(2)}}\to c_2$ are bimorphisms. Then 
\begin{align*}\label{}
\gamma ^{(1)}\cdot\gamma ^{(2)}&=(\epsilon ^{(1)}\ast u_1)(\epsilon ^{(2)}\ast u_2)\\
&=(\epsilon ^{(1)}\ast u_1)\ast((\epsilon ^{(2)}\ast u_2)(c_{1}))^\circ\\
&=\epsilon ^{(1)}\ast (u_1\epsilon ^{(2)}(c_{1}) u_2)^\circ.
\end{align*}
Let $qu$ be the consistent factorisation of the epimorphism $(u_1\epsilon ^{(2)}(c_{1}) u_2)^\circ$ and so 
$$\gamma ^{(1)}\cdot\gamma ^{(2)}=\epsilon\ast qu= (\epsilon\ast q) \ast u.$$
Now, let the codomain of the retraction $q$ be $c$ so that $c\subseteq c_{\epsilon ^{(1)}}$ and $\epsilon(c)=j_c^{c_{\epsilon ^{(1)}}}\epsilon(c_{\epsilon ^{(1)}})=j_c^{c_{\epsilon ^{(1)}}}1_{c_{\epsilon ^{(1)}}}=j_c^{c_{\epsilon ^{(1)}}}$. Then the component $\epsilon\ast q(c)=\epsilon(c) q= j_{c}^{c_{\epsilon ^{(1)}}}q=1_c$. Hence the consistent cone $\epsilon\ast q$ is an idempotent. Also since $u$ is a bimorphism, $ \widehat{\mathcal{C}}$ is closed.
\end{proof}
To show that $ \widehat{\mathcal{C}}$ is concordant, we need to first show that the idempotents of $ \widehat{\mathcal{C}}$ generate a regular subsemigroup or equivalently, identify a full regular subsemigroup of the semigroup $ \widehat{\mathcal{C}}$, such that the biordered sets of $\widehat{\mathcal{C}}$ and its subsemigroup, are isomorphic. 
\begin{lem}\label{lemnorm}
Let ${\overline{\mathcal{C}}}$ denote the set of all morphisms with normal factorisations in the consistent category $\mathcal{C}$. Then ${\overline{\mathcal{C}}}$ forms a normal subcategory of the category $\mathcal{C}$.	
\end{lem}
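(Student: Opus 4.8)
The plan is to verify that $\overline{\mathcal{C}}$, equipped with the subobjects, inclusions and retractions inherited from $\mathcal{C}$, satisfies the axioms (NC 1)--(NC 4). The routine points come first. An identity, an inclusion, a retraction and an isomorphism each carries a trivial normal factorisation (for instance $j = 1\cdot 1\cdot j$ for an inclusion $j$, $q = q\cdot 1\cdot 1$ for a retraction $q$), so all of these lie in $\overline{\mathcal{C}}$; in particular $v\overline{\mathcal{C}} = v\mathcal{C}$, the inclusion preorder $\mathcal{P}$ of $\mathcal{C}$ is a subcategory of $\overline{\mathcal{C}}$, every inclusion of $\overline{\mathcal{C}}$ is a monomorphism, and the third clause of Definition~\ref{catsub} is inherited from $\mathcal{C}$; so $(\overline{\mathcal{C}},\mathcal{P})$ is a category with subobjects as soon as $\overline{\mathcal{C}}$ is closed under composition, which gives (NC 1). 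By (CC 2) every inclusion of $\mathcal{C}$ splits and the splitting retraction lies in $\overline{\mathcal{C}}$, yielding (NC 2). For (NC 3), if $f = quj$ is a normal factorisation of some $f\in\overline{\mathcal{C}}$ in $\mathcal{C}$, then $q,u,j$ themselves have trivial normal factorisations, hence belong to $\overline{\mathcal{C}}$, so the factorisation takes place inside $\overline{\mathcal{C}}$; thus (NC 3) is essentially the definition of $\overline{\mathcal{C}}$.

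The substantive step, and the one I expect to be the main obstacle, is that $\overline{\mathcal{C}}$ is closed under composition. Given composable $f = q_1\theta_1 j_1$ and $g = q_2\theta_2 j_2$ with these normal factorisations, write $fg = q_1\theta_1\,(j_1 q_2)\,\theta_2 j_2$. The middle factor $j_1 q_2$ is an inclusion followed by a retraction, so by (CC 5) it has a normal factorisation $j_1 q_2 = q'\theta' j'$, giving $fg = (q_1\theta_1 q')\,\theta'\,(j'\theta_2 j_2)$. To reassemble this into a single normal factorisation one must slide the retraction $q'$ to the left past $\theta_1$ and the inclusion $j'$ to the right past $\theta_2$. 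This is exactly where (CC 4) is used: an isomorphism is a bimorphism, hence consistent, so it puts the subobjects of its domain and codomain in order-isomorphic correspondence and, by the naturality built into the definition of a consistent bimorphism, a retraction onto a subobject of its codomain is converted into a retraction onto the corresponding subobject of its domain followed by an isomorphism --- and dually for inclusions. Substituting these identities, $fg$ rewrites as $(q_1 Q_0)\,(\Phi_0\theta'\Psi_0)\,(J_0 j_2)$, where $q_1 Q_0$ is a composite of retractions (hence a retraction), $J_0 j_2$ a composite of inclusions (hence an inclusion), and $\Phi_0\theta'\Psi_0$ a composite of isomorphisms; therefore $fg\in\overline{\mathcal{C}}$. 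Combined with the previous paragraph, $\overline{\mathcal{C}}$ is a subcategory of $\mathcal{C}$ satisfying (NC 1)--(NC 3).

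It remains to establish (NC 4). Fix $c\in v\overline{\mathcal{C}} = v\mathcal{C}$ and, by (CC 6), choose a consistent cone $\epsilon$ in $\mathcal{C}$ with $\epsilon(c) = 1_c$; by Lemma~\ref{lemconssemi}, $\epsilon$ is idempotent. Since $\epsilon(c) = 1_c$ is an isomorphism, it is enough to show that every component $\epsilon(a)$ lies in $\overline{\mathcal{C}}$, for then $\epsilon$ is a normal cone of $\overline{\mathcal{C}}$ with $\epsilon(c)=1_c$. If $a\subseteq c$ this is clear, because $\epsilon(a) = j_a^c\,\epsilon(c) = j_a^c$. For arbitrary $a$, take a consistent factorisation $\epsilon(a) = quj$ with $q\colon a\to x$ a retraction, $u\colon x\to y$ a bimorphism and $j = j_y^c$; the cone identity $j_x^a\epsilon(a) = \epsilon(x)$ then gives $\epsilon(x) = uj$, so $u$ is the epimorphic component of $\epsilon(x)$. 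The one place in (NC 4) that demands genuine work is to prove that this bimorphism $u$ is in fact an isomorphism: this is the cone-level reflection of the fact that the idempotents of $\widehat{\mathcal{C}}$ behave as the idempotents of a regular semigroup, and it is here that axiom (CC 5)---which encodes the regularity of the biordered set of idempotents---is invoked once more. Granting it, $\epsilon(a) = quj$ is a normal factorisation, so $\epsilon(a)\in\overline{\mathcal{C}}$, which completes (NC 4) and the proof.
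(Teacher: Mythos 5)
Your closure-under-composition argument is correct and follows the same route as the paper: write $fg=q_1\theta_1(j_1q_2)\theta_2 j_2$, apply (CC\,5) to the middle factor $j_1q_2$, and then commute the resulting retraction leftwards past the isomorphism $\theta_1$ and the resulting inclusion rightwards past $\theta_2$. Where the paper disposes of this last commuting step by citing Proposition II.9 and Corollaries II.10--II.11 of \cite{cross}, you derive it from (CC\,4): an isomorphism is a bimorphism, hence consistent, and naturality of the extended functor $T$ (which carries retractions to retractions, since retractions are precisely the right inverses of inclusions and $T$ is an isomorphism of $\langle\cdot\rangle$-categories preserving inclusions) turns $\theta_1 q'$ into a retraction followed by an isomorphism, and dually for $j'\theta_2$. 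That is a legitimate, self-contained alternative to the citations, and your handling of (NC\,1)--(NC\,3) matches what the paper dismisses as clear.

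The gap is in (NC\,4). You correctly observe that the cone $\epsilon$ supplied by (CC\,6) only yields a normal cone \emph{in} $\overline{\mathcal{C}}$ if every component $\epsilon(a)$ has a normal factorisation, and you correctly reduce this to showing that the bimorphism $u$ in a consistent factorisation $\epsilon(a)=quj$ (the epimorphic part of $\epsilon(x)$) is an isomorphism. But you do not prove this: you call it the ``cone-level reflection'' of regularity of the idempotents, gesture at (CC\,5), and then proceed ``granting it''. Axiom (CC\,5) only concerns morphisms of the form (inclusion)(retraction); neither $\epsilon(a)$ nor $u$ has that form, and you indicate no reduction to it, so the appeal to (CC\,5) is not an argument. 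Since bimorphisms in a consistent category are in general far from being isomorphisms, the claim is genuinely substantive: what is needed is precisely that every idempotent cone of $\mathcal{C}$ has all its components in $\overline{\mathcal{C}}$, i.e.\ $E(\widehat{\mathcal{C}})\subseteq\widehat{\overline{\mathcal{C}}}$ --- the fullness observation the paper records immediately after the lemma and uses for Proposition~\ref{pronormal}. The paper's own proof declares everything except closure under composition to be clear, so it does not spell this step out either; but in your write-up it is an explicitly acknowledged unproven claim, and as it stands your verification of (NC\,4) is incomplete.
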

\begin{proof}
By Definition \ref{dfnnormc}, it is clear that if we show ${\overline{\mathcal{C}}}$ is closed under composition of morphisms, then we are done. Let $f,g$ be morphisms of the consistent category $\mathcal{C}$ such that $f=q_1u_1j_1$ and $g=q_2u_2j_2$ are normal factorisations.
Then by axiom $(CC\: 5)$, the morphism $j_1q_2$ has a normal factorisation such that $j_1q_2=q_3u_3j_3$. So,
$$fg=q_1u_1j_1\:q_2u_2j_2=q_1(u_1q_3)u_3(j_3u_2)j_2.$$
Now, using \cite[Corollary II.11]{cross}, \cite[Corollary II.10]{cross} and \cite[Proposition II.9]{cross} sequentially, we see that the epimorphism $u_1q_3$ and the monomorphism $j_3u_2$ have normal factorisations $u_1q_3=q_4u_4$ and $j_3u_2=u_5j_5$ so that
$$fg= q_1(q_4u_4)u_3(u_4j_4)j_2=(q_1q_4)(u_4u_3u_5)(j_5j_2).$$
Hence $fg$ has a normal factorisation of the above form where $q_1q_4$ is a retraction $u_4u_3u_5$ is an isomorphism and $j_5j_2$ is an inclusion. Hence the lemma.
\end{proof}

Observe that every consistent cone in ${\overline{\mathcal{C}}}$ is normal and every idempotent cone in $\mathcal{C}$ is also normal. Then by \cite[Theorem III.2]{cross}, we have the following.
\begin{pro}\label{pronormal}
The semigroup $\widehat{\overline{\mathcal{C}}}$ of all normal cones in ${\overline{\mathcal{C}}}$ is a full regular subsemigroup of the semigroup $ \widehat{\mathcal{C}}$.
\end{pro}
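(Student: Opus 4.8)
The plan is to invoke Nambooripad's Theorem III.2 of \cite{cross}, which is precisely the analogue of this statement in the setting of normal categories. First I would observe that by Lemma \ref{lemnorm}, $\overline{\mathcal{C}}$ is a normal category in its own right, so Nambooripad's machinery applies to it verbatim: the set $\widehat{\overline{\mathcal{C}}}$ of all normal cones in $\overline{\mathcal{C}}$ forms a regular semigroup under the composition \eqref{eqnbin} (with epimorphic components computed inside $\overline{\mathcal{C}}$). The content of the proposition is then twofold: (a) $\widehat{\overline{\mathcal{C}}}$ is a \emph{subsemigroup} of $\widehat{\mathcal{C}}$, i.e. the inclusion $\overline{\mathcal{C}}\hookrightarrow\mathcal{C}$ induces a semigroup embedding of $\widehat{\overline{\mathcal{C}}}$ into $\widehat{\mathcal{C}}$ that respects the products; and (b) this subsemigroup is \emph{full}, i.e. it contains all idempotents of $\widehat{\mathcal{C}}$.

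For part (a) I would first check that a normal cone $\gamma$ in $\overline{\mathcal{C}}$ is the same data as a consistent cone in $\mathcal{C}$ all of whose components lie in $\overline{\mathcal{C}}$ and which has an isomorphic component — in particular $\widehat{\overline{\mathcal{C}}}\subseteq\widehat{\mathcal{C}}$ as sets (indeed, $\widehat{\overline{\mathcal{C}}}\subseteq\widehat{\mathcal{C}}$ follows since an isomorphism is a bimorphism, and a normal cone of the prescribed shape $\epsilon\ast u$ with $u$ an isomorphism is in particular of the shape required for membership in $\widehat{\mathcal{C}}$). The key point is that the product \eqref{eqnbin} is stable: if $\gamma^{(1)},\gamma^{(2)}\in\widehat{\overline{\mathcal{C}}}$, then $\gamma^{(2)}(c_{\gamma^{(1)}})$ is a morphism of $\overline{\mathcal{C}}$ (components of a normal cone in $\overline{\mathcal{C}}$ lie there), hence by Lemma \ref{lemnorm} and \cite[Corollary II.11]{cross} its epimorphic component $(\gamma^{(2)}(c_{\gamma^{(1)}}))^\circ$ is again in $\overline{\mathcal{C}}$ and coincides whether computed in $\overline{\mathcal{C}}$ or in $\mathcal{C}$ (uniqueness of the epimorphic component, \cite[Section II.2]{cross}); and $\gamma^{(1)}\ast(-)$ again produces a normal cone in $\overline{\mathcal{C}}$. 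So the product in $\widehat{\overline{\mathcal{C}}}$ agrees with the ambient product in $\widehat{\mathcal{C}}$, giving the embedding.

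For part (b) I would use the idempotent criterion of Lemma \ref{lemconssemi}: a consistent cone $\epsilon$ in $\mathcal{C}$ is idempotent iff $\epsilon(c_\epsilon)=1_{c_\epsilon}$. Given such an $\epsilon$, each component $\epsilon(c)=j_c^{c_\epsilon}\,\epsilon(c_\epsilon)$ when $c\subseteq c_\epsilon$ is an inclusion, and in general $\epsilon(c)$ factors as a retraction followed by an inclusion (this is the shape $\epsilon=\epsilon\ast 1_{c_\epsilon}$, and every component of a consistent cone has a consistent factorisation whose bimorphic part, restricted appropriately, is forced by idempotency to be trivial); by axiom $(CC\,5)$ every morphism of $\mathcal{C}$ that is a product of an inclusion and a retraction has a normal factorisation, and dually one checks retraction-then-inclusion composites do too, so every component of $\epsilon$ lies in $\overline{\mathcal{C}}$. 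Hence $\epsilon$ is a normal cone in $\overline{\mathcal{C}}$, i.e. $\epsilon\in\widehat{\overline{\mathcal{C}}}$. Combined with the fact (from \cite[Theorem III.2]{cross}) that $\widehat{\overline{\mathcal{C}}}$ is a regular semigroup, and that every idempotent of $\widehat{\mathcal{C}}$ lies in it, we conclude $\widehat{\overline{\mathcal{C}}}$ is a full regular subsemigroup of $\widehat{\mathcal{C}}$.

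The main obstacle I anticipate is the bookkeeping in part (b): carefully verifying that \emph{every} component $\epsilon(c)$ of an idempotent consistent cone (not just those at subobjects of the vertex) lands in $\overline{\mathcal{C}}$, which forces one to untangle how a consistent factorisation of $\epsilon(c)$ interacts with the naturality/normality of $\epsilon$ and to apply $(CC\,5)$ correctly to both orders of inclusion–retraction composites. The stability of epimorphic components under passage between $\mathcal{C}$ and $\overline{\mathcal{C}}$ is routine given \cite[Section II.2]{cross}, and everything else is a direct appeal to \cite[Theorem III.2]{cross} applied to the normal category $\overline{\mathcal{C}}$ furnished by Lemma \ref{lemnorm}.
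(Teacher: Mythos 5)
Your overall route is the same as the paper's: obtain the normal category $\overline{\mathcal{C}}$ from Lemma \ref{lemnorm}, invoke \cite[Theorem III.2]{cross} to get that $\widehat{\overline{\mathcal{C}}}$ is a regular semigroup, check that the composition (\ref{eqnbin}) restricts (your part (a) is fine: uniqueness of the epimorphic component and closure of $\overline{\mathcal{C}}$ under composition do exactly that work), and then verify fullness. The paper itself offers nothing beyond the observation that every consistent cone in $\overline{\mathcal{C}}$ is normal and every idempotent cone of $\mathcal{C}$ is normal, followed by the citation.

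The gap is in your part (b). You assert that for an idempotent consistent cone $\epsilon$ the component $\epsilon(c)$ at an \emph{arbitrary} object $c$ factors as a retraction followed by an inclusion, the bimorphic part being ``forced by idempotency to be trivial''. Idempotency only says $\epsilon(c_\epsilon)=1_{c_\epsilon}$, and the cone identity only controls $\epsilon(c)$ for $c\subseteq c_\epsilon$ (where it is indeed $j_c^{c_\epsilon}$); it imposes nothing at objects not below the vertex. The claim already fails in $\mathbb{L}(S)$ for a completely simple semigroup $S$: the principal cone $\rho^e$ is idempotent, its component at $Sf$ is $\rho(f,fe,e)$, and any retraction-followed-by-inclusion composite has the form $\rho(f,x,g)\rho(g,g,e)=\rho(f,x,e)$ with $x\in E(L_g)\cap\omega(f)$, so equality would force $fe=f(fe)=x$ to be an idempotent; but $fe\notin E(S)$ for suitable idempotents in, say, a $2\times 2$ Rees matrix semigroup over $\mathbb{Z}_2$ with a non-normalised sandwich matrix. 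So such components genuinely carry a non-identity isomorphism (here) or bimorphism (in general) part, and this happens even in Nambooripad's normal setting, so it cannot be a consequence of idempotency. Your appeal to $(CC\:5)$ is also inverted: a retraction followed by an inclusion is already a normal factorisation (with identity middle term), whereas $(CC\:5)$ concerns the nontrivial order, inclusion followed by retraction --- a further sign that the shape you are trying to force is not what idempotency provides. Consequently your argument that every idempotent of $\widehat{\mathcal{C}}$ has all its components in $\overline{\mathcal{C}}$ does not go through; note that the paper does not argue this way either, but simply records fullness as the unproved observation that idempotent cones are normal. A genuine justification has to show directly that each component of an idempotent cone admits a normal factorisation (in $\mathbb{L}(S)$ this amounts to regularity of the elements $u_f$ occurring in the components), which is precisely the point your retraction--inclusion claim was meant to, but does not, settle.
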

The following lemma regarding the biorder relations in $\hat{\mathcal{C}}$ can be easily verified.
\begin{lem}\label{lemgc}
Let $\epsilon ^{(1)},\epsilon ^{(2)}$ be idempotents in the semigroup $\hat{\mathcal{C}}$. Then $$\epsilon ^{(1)}\mathrel{\omega}^l\epsilon ^{(2)}\text{ if and only if }c_{\epsilon ^{(1)}}\subseteq c_{\epsilon ^{(2)}};$$
$$\epsilon ^{(1)}\mathrel{\omega}^r\epsilon ^{(2)}\text{ if and only if }\epsilon ^{(1)}(c_{\epsilon ^{(2)}})\text{ is an epimorphism such that }\epsilon ^{(1)}=\epsilon ^{(2)}\ast\epsilon ^{(1)}(c_{\epsilon ^{(2)}}).$$ 
\end{lem}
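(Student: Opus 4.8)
The plan is to read off both biorder relations directly from the cone multiplication (\ref{eqnbin}) together with Lemma \ref{lemconssemi}. Recall that for idempotents of $\widehat{\mathcal{C}}$ one has $\epsilon^{(1)}\omega^l\epsilon^{(2)}$ exactly when $\epsilon^{(1)}\cdot\epsilon^{(2)}=\epsilon^{(1)}$, and $\epsilon^{(1)}\omega^r\epsilon^{(2)}$ exactly when $\epsilon^{(2)}\cdot\epsilon^{(1)}=\epsilon^{(1)}$, and that, being idempotent cones, $\epsilon^{(i)}(c_{\epsilon^{(i)}})=1_{c_{\epsilon^{(i)}}}$. The structural facts I will invoke are the standard ones about consistent factorisations from \cite[Section II.2]{cross}: the image $im f$ of a morphism $f$ is well defined and is a subobject of its codomain; $f$ is an epimorphism precisely when $f^\circ=f$; and the epimorphic component of an inclusion is an identity. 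I will also use that the vertex of the cone $\gamma\ast g^\circ$ equals $im\,g$ for any morphism $g\colon c_\gamma\to d$, and that the cone condition applied to an idempotent cone $\epsilon$ gives $\epsilon(c')=j_{c'}^{c_\epsilon}$ for every $c'\subseteq c_\epsilon$.

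For the $\omega^l$-statement I will expand, using (\ref{eqnbin}),
\begin{equation*}
\epsilon^{(1)}\cdot\epsilon^{(2)}=\epsilon^{(1)}\ast\bigl(\epsilon^{(2)}(c_{\epsilon^{(1)}})\bigr)^\circ,
\end{equation*}
a cone with vertex $im\bigl(\epsilon^{(2)}(c_{\epsilon^{(1)}})\bigr)$, which is a subobject of $c_{\epsilon^{(2)}}$. If this product equals $\epsilon^{(1)}$, comparing vertices forces $im\bigl(\epsilon^{(2)}(c_{\epsilon^{(1)}})\bigr)=c_{\epsilon^{(1)}}$, and hence $c_{\epsilon^{(1)}}\subseteq c_{\epsilon^{(2)}}$. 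Conversely, if $c_{\epsilon^{(1)}}\subseteq c_{\epsilon^{(2)}}$, the cone condition gives $\epsilon^{(2)}(c_{\epsilon^{(1)}})=j_{c_{\epsilon^{(1)}}}^{c_{\epsilon^{(2)}}}$, so its epimorphic component is $1_{c_{\epsilon^{(1)}}}$ and the product collapses to $\epsilon^{(1)}\ast 1_{c_{\epsilon^{(1)}}}=\epsilon^{(1)}$.

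The $\omega^r$-statement is handled symmetrically, now expanding
\begin{equation*}
\epsilon^{(2)}\cdot\epsilon^{(1)}=\epsilon^{(2)}\ast\bigl(\epsilon^{(1)}(c_{\epsilon^{(2)}})\bigr)^\circ,
\end{equation*}
a cone with vertex $im\bigl(\epsilon^{(1)}(c_{\epsilon^{(2)}})\bigr)$, a subobject of $c_{\epsilon^{(1)}}$. If this equals $\epsilon^{(1)}$, comparing vertices gives $im\bigl(\epsilon^{(1)}(c_{\epsilon^{(2)}})\bigr)=c_{\epsilon^{(1)}}$, that is, $\epsilon^{(1)}(c_{\epsilon^{(2)}})$ is an epimorphism, whence $\bigl(\epsilon^{(1)}(c_{\epsilon^{(2)}})\bigr)^\circ=\epsilon^{(1)}(c_{\epsilon^{(2)}})$ and the displayed equation becomes exactly $\epsilon^{(1)}=\epsilon^{(2)}\ast\epsilon^{(1)}(c_{\epsilon^{(2)}})$. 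Conversely, if $\epsilon^{(1)}(c_{\epsilon^{(2)}})$ is an epimorphism with $\epsilon^{(1)}=\epsilon^{(2)}\ast\epsilon^{(1)}(c_{\epsilon^{(2)}})$, then it coincides with its own epimorphic component and so $\epsilon^{(2)}\cdot\epsilon^{(1)}=\epsilon^{(1)}$.

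The whole argument is formal bookkeeping with the cone product, so I expect no genuine obstacle; the only points needing a little care are keeping track of which object each cone component maps into (so as to read off the vertex of a product correctly), and recalling that in the strict preorder $\mathcal{P}$ the inclusion between two comparable objects is unique, which is what forces $\epsilon^{(2)}(c_{\epsilon^{(1)}})=j_{c_{\epsilon^{(1)}}}^{c_{\epsilon^{(2)}}}$. These are the consistent-category analogues of the facts used for $\mathbb{L}(S)$ in Lemma \ref{lemmor} and of the parallel computations for normal categories in \cite{cross}, after which, as the statement says, the verification is easy.
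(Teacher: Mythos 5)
Your argument is correct and is essentially the verification the paper intends: the paper states Lemma \ref{lemgc} without proof as ``easily verified'' (with the analogues in \cite[Propositions III.5 and III.7]{cross}), and your direct expansion of the cone product (\ref{eqnbin}), using $\epsilon^{(2)}(c')=j_{c'}^{c_{\epsilon^{(2)}}}$ for $c'\subseteq c_{\epsilon^{(2)}}$ together with the uniqueness of images and epimorphic components from \cite[Section II.2]{cross}, is exactly that standard computation. I see no gaps; your side remarks (image equal to codomain characterising epimorphisms, epimorphic component of an inclusion being an identity) are the same facts the paper itself imports from \cite{cross} for consistent factorisations.
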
 
Observe that the set of idempotents $E(\widehat{\overline{\mathcal{C}}})=E(\widehat{{\mathcal{C}}})$. Also by \cite[Proposition III.5]{cross} and \cite[Proposition III.7]{cross}, we can see that the quasi orders coincide. Hence the biordered sets of $E(\widehat{\overline{\mathcal{C}}})$ and $E(\widehat{{\mathcal{C}}})$ are equal. In particular, $E(\widehat{{\mathcal{C}}})$ is a regular biordered set with quasi orders defined as above.

Now, we proceed to show that $ \widehat{\mathcal{C}}$ is an abundant semigroup.
\begin{lem}\label{lemab}
Let $\mathcal{C}$ be a consistent category and let $\gamma =\epsilon\ast u \in  \widehat{\mathcal{C}}$. If $\delta$ is an idempotent cone in $\mathcal{C}$ such that $c_\gamma=c_\delta$, then $\epsilon\mathrel{\mathscr{R}}^*\gamma\mathrel{\mathscr{L}}^*\delta$. 
\end{lem}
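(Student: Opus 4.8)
The plan is to verify directly that $\epsilon\,\mathscr{R}^*\gamma$ and $\gamma\,\mathscr{L}^*\delta$ hold in the semigroup $\widehat{\mathcal{C}}$. Since $\epsilon$ and $\delta$ are idempotents of $\widehat{\mathcal{C}}$ by Lemma \ref{lemconssemi}, Lemma \ref{lemconid} and its dual reduce each relation to an identity together with a cancellation condition, and I would establish these. Throughout I write $u\colon c_\epsilon\to c_\gamma$ for the bimorphism with $\gamma=\epsilon\ast u$, and I would use freely that $\eta_1\eta_2=\eta_1\ast(\eta_2(c_{\eta_1}))^\circ$ for $\eta_1,\eta_2\in\widehat{\mathcal{C}}$ by (\ref{eqnbin}); that $(\zeta\ast f)\ast g=\zeta\ast(fg)$ for composable epimorphisms $f,g$, which is immediate from the definition of $\ast$; and that an epimorphism equals its own epimorphic component (in a consistent factorisation $quj$ of an epimorphism the inclusion $j$ is then itself an epimorphism, hence an isomorphism, hence an identity).

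For $\gamma\,\mathscr{L}^*\delta$ I would first note $\gamma\delta=\gamma\ast(\delta(c_\delta))^\circ=\gamma\ast 1_{c_\delta}=\gamma$, using that $\delta$ is idempotent and $c_\gamma=c_\delta$. For the cancellation condition, given $x,y\in\widehat{\mathcal{C}}^1$ with $\gamma x=\gamma y$, I would rewrite $\gamma x=(\epsilon\ast u)\ast(x(c_\gamma))^\circ=\epsilon\ast(u\,(x(c_\gamma))^\circ)$, and similarly for $y$; evaluating these cones at $c_\epsilon$ (where $\epsilon(c_\epsilon)=1_{c_\epsilon}$) gives $u\,(x(c_\gamma))^\circ=u\,(y(c_\gamma))^\circ$, and left-cancelling the epimorphism $u$ yields $(x(c_\gamma))^\circ=(y(c_\gamma))^\circ$. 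Since $c_\gamma=c_\delta$, this gives $\delta x=\delta\ast(x(c_\delta))^\circ=\delta\ast(y(c_\delta))^\circ=\delta y$, and Lemma \ref{lemconid} then gives $\gamma\,\mathscr{L}^*\delta$.

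For $\gamma\,\mathscr{R}^*\epsilon$ I would first note $\epsilon\gamma=\epsilon\ast(\epsilon(c_\epsilon)u)^\circ=\epsilon\ast u^\circ=\epsilon\ast u=\gamma$. For the cancellation condition, suppose $x\gamma=y\gamma$; using $\gamma=\epsilon\gamma$ this becomes $(x\epsilon)\gamma=(y\epsilon)\gamma$, so it is enough to deduce $x\epsilon=y\epsilon$ from this. Put $x'=x\epsilon$, $y'=y\epsilon$; their vertices $c_{x'}=im(\epsilon(c_x))$ and $c_{y'}=im(\epsilon(c_y))$ are subobjects of $c_\epsilon$, and for every subobject $c'$ of $c_\epsilon$ the cone identity $j_{c'}^{c_\epsilon}\epsilon(c_\epsilon)=\epsilon(c')$ together with $\epsilon(c_\epsilon)=1_{c_\epsilon}$ gives $\gamma(c')=\epsilon(c')u=j_{c'}^{c_\epsilon}u$. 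This is a monomorphism (an inclusion composed with the mono $u$), so its epimorphic component $v_{c'}:=(j_{c'}^{c_\epsilon}u)^\circ$ is a bimorphism from $c'$ onto $im(j_{c'}^{c_\epsilon}u)=T^u(c')$. Hence $x'\gamma=x'\ast v_{c_{x'}}$ and $y'\gamma=y'\ast v_{c_{y'}}$, and comparing the vertices of these equal cones gives $T^u(c_{x'})=T^u(c_{y'})$. Since $u$ is consistent by $(CC\: 4)$, the functor $T^u$ extends to an isomorphism $\langle c_\epsilon\rangle\to\langle c_\gamma\rangle$ and so is injective on objects, whence $c_{x'}=c_{y'}=:d$ and $v_{c_{x'}}=v_{c_{y'}}=v_d$. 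Then $x'\ast v_d=y'\ast v_d$, and evaluating componentwise and right-cancelling the monomorphism $v_d$ gives $x'=y'$, i.e.\ $x\epsilon=y\epsilon$. The dual of Lemma \ref{lemconid} then gives $\gamma\,\mathscr{R}^*\epsilon$, and together with the previous paragraph, $\epsilon\,\mathscr{R}^*\gamma\,\mathscr{L}^*\delta$.

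The hard part will be the cancellation condition for $\mathscr{R}^*$. In a normal category $u$ would be an isomorphism and one would simply right-cancel $u^{-1}$ from $\gamma=\epsilon\ast u$; in the consistent setting the substitute is the family of residual bimorphisms $v_{c'}=(j_{c'}^{c_\epsilon}u)^\circ$, and the argument turns on the fact---supplied by $(CC\: 4)$---that $T^u$ is injective on the subobjects of $c_\epsilon$, which is exactly what forces $v_{c_{x'}}=v_{c_{y'}}$ once the vertices of $x'\gamma$ and $y'\gamma$ coincide, and so permits the final right cancellation. The only other point needing (routine, as in \cite[Section II.2]{cross}) care is that the epimorphic component of a monomorphism is a bimorphism.
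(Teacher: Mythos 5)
Your proof is correct and follows essentially the same route as the paper's: both halves reduce to Lemma \ref{lemconid} and its dual via the identities $\epsilon\cdot\gamma=\gamma$ and $\gamma\cdot\delta=\gamma$, followed by a cancellation argument on the components of the cones. The only real difference is at the $\mathscr{R}^*$-cancellation, where you work with the residuals $(j_{c'}^{c_\epsilon}u)^\circ$ and invoke axiom $(CC\:4)$ (injectivity of $T^u$ on subobjects) to identify the two vertices before right-cancelling -- a more careful rendering of the step the paper compresses into ``since $u$ is a bimorphism''.
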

\begin{proof}
Since $\gamma=\epsilon\ast u$, we have $\epsilon\cdot\gamma=\gamma$. Let $\gamma ^{(1)},\gamma ^{(2)}	\in \widehat{\mathcal{C}}$ be such that $\gamma ^{(1)}\gamma=\gamma ^{(2)}\gamma$. Then
\begin{align*}
\gamma ^{(1)}\cdot\epsilon\ast u&=\gamma ^{(2)}\cdot\epsilon\ast u \\
\gamma ^{(1)}\ast (\epsilon(c_{\gamma ^{(1)}}))^\circ u&=\gamma ^{(2)}\ast (\epsilon(c_{\gamma ^{(2)}}))^\circ u \\
\gamma ^{(1)}\ast (\epsilon(c_{\gamma ^{(1)}}))^\circ &=\gamma ^{(2)}\ast (\epsilon(c_{\gamma ^{(2)}}))^\circ \qquad \text{ (since $u$ is a bimorphism)}\\
\gamma ^{(1)}\cdot\epsilon&=\gamma ^{(2)}\cdot\epsilon.
\end{align*}
Hence by Lemma \ref{lemconid}, we have $\gamma\mathrel{\mathscr{R}}^*\epsilon$.

Now, since $\delta$ is an idempotent cone (such a $\delta$ exists in $\mathcal{C}$ by axiom $(CC\:6)$) and $c_\gamma=c_\delta$, we have $\delta(c_\gamma)=1_{c_\gamma}$. For any $c\in \mathcal{C}$, we have $\gamma\cdot\delta(c)= \gamma(c) (\delta(c_\gamma))^\circ=\gamma(c)$. Hence $\gamma\cdot\delta=\gamma$.
Now, if $\gamma ^{(1)},\gamma ^{(2)}\in \widehat{\mathcal{C}}$ with $\gamma\gamma ^{(1)}=\gamma\gamma ^{(2)}$, then for any $c\in v\mathcal{C}$, we have that $\gamma(c)(\gamma ^{(1)}(c_\gamma))^\circ= \gamma(c)(\gamma ^{(2)}(c_\gamma))^\circ$. In particular, since $\gamma$ is consistent, there exists $d\in v\mathcal{C}$ such that $\gamma(d)$ is a bimorphism. By cancellation, we obtain  $(\gamma ^{(1)}(c_\gamma))^\circ= (\gamma ^{(2)}(c_\gamma))^\circ$. Further, since $c_\gamma=c_\delta$, we have $\delta\ast(\gamma ^{(1)}(c_\delta))^\circ= \delta\ast (\gamma ^{(2)}(c_\delta))^\circ$, i.e., $\delta\cdot\gamma ^{(1)}=\delta\cdot\gamma ^{(2)}$. Thus by Lemma \ref{lemconid}, we obtain $\delta\mathrel{\mathscr{L}}^*\gamma$. Hence the lemma.
\end{proof}

\begin{lem}\label{lemic}
$\widehat{\mathcal{C}}$ is idempotent-connected.
\end{lem}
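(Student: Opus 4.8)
The plan is to verify the IC condition for $\widehat{\mathcal{C}}$ directly, using the simplified form given in Lemma~\ref{lemico}: for an arbitrary cone $\gamma = \epsilon \ast u \in \widehat{\mathcal{C}}$, I need to exhibit idempotent cones $\gamma^\dagger \in R_\gamma^*(\widehat{\mathcal{C}})$, $\gamma^\ast \in L_\gamma^*(\widehat{\mathcal{C}})$ and a unique bijection $\alpha\colon \omega(\gamma^\dagger)\to\omega(\gamma^\ast)$ with $x\gamma = \gamma(x\alpha)$ for all $x\in\omega(\gamma^\dagger)$. By Lemma~\ref{lemab}, $\epsilon\mathscr{R}^*\gamma$ and, if $\delta$ is the idempotent cone with $c_\delta = c_\gamma$ (which exists by (CC 6)), then $\delta\mathscr{L}^*\gamma$; so I take $\gamma^\dagger = \epsilon$ and $\gamma^\ast = \delta$. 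Recall $c_\epsilon = c_\gamma^{\mathrm{dom}}$, i.e.\ $u\colon c_\epsilon\to c_\gamma = c_\delta$ is the bimorphism exhibited in the factorisation, and by Lemma~\ref{lemgc} the idempotent cones in $\omega(\epsilon)$ are exactly those $\eta$ with $c_\eta\subseteq c_\epsilon$ and $\eta = \epsilon\ast\eta(c_\epsilon)$ where $\eta(c_\epsilon)$ is a retraction with $\eta(c_\epsilon)\eta(c_\epsilon)^{-1}$\dots\ more carefully, $\eta\omega\epsilon$ means $\eta\omega^l\epsilon$ and $\eta\omega^r\epsilon$, so $c_\eta\subseteq c_\epsilon$ and $\eta = \epsilon\ast\eta(c_\epsilon)$ with $\eta(c_\epsilon)$ an epimorphism (necessarily a retraction onto $c_\eta$, since $\eta$ is idempotent).

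Next I would write down the candidate bijection. Since $u$ is a bimorphism and $\mathcal{C}$ is consistent, axiom (CC 4) gives the isomorphism $T = T^u\colon \langle c_\epsilon\rangle \to \langle c_\gamma\rangle = \langle c_\delta\rangle$ in $cor\,\mathcal{C}$ extending the subobject functor, together with the natural isomorphism $c'\mapsto T(c')$. For an idempotent cone $\eta\in\omega(\epsilon)$ with retraction $r_\eta := \eta(c_\epsilon)\colon c_\epsilon\to c_\eta$, I define $\eta\alpha$ to be the idempotent cone $\delta\ast\big(T(c_\eta\subseteq c_\epsilon)\,\text{-retraction}\big)$ — concretely, $\eta\alpha = \delta\ast q$ where $q\colon c_\delta\to T(c_\eta)$ is the retraction in $\langle c_\delta\rangle$ corresponding under $T$ to $r_\eta$; this is idempotent because its component at $c_\delta$ is $\delta(c_\delta)q = 1_{c_\delta}q = q$, which composes with the matching inclusion to $1$, exactly as in the proof that $\epsilon\ast q$ is idempotent in the preceding proposition. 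That $\alpha$ is a bijection follows since $T$ is an isomorphism of $cor$-categories and, by Lemma~\ref{lemgc}, idempotent cones below $\epsilon$ (resp.\ below $\delta$) correspond bijectively to retractions out of $c_\epsilon$ (resp.\ $c_\delta$), hence to objects of $\langle c_\epsilon\rangle$ (resp.\ $\langle c_\delta\rangle$).

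The main work is the identity $\eta\cdot\gamma = \gamma\cdot(\eta\alpha)$ for all $\eta\in\omega(\epsilon)$. Unravelling the multiplication~(\ref{eqnbin}): $\eta\cdot\gamma = \eta\ast(\gamma(c_\eta))^\circ$ and $\gamma = \epsilon\ast u$, so one side is $\epsilon\ast r_\eta\ast(\cdots)$; the other side is $\gamma\ast((\eta\alpha)(c_\gamma))^\circ = (\epsilon\ast u)\ast q^\circ$. The equality should reduce, after cancelling the common idempotent cone $\epsilon$ on the left and using that $u$ is a bimorphism, to the commuting square $r_\eta\, u = u\, q$ with $q$ the $T$-image of $r_\eta$ — but this is precisely the naturality square for the natural isomorphism $c'\mapsto T(c')$ associated with the consistent bimorphism $u$ (the version with vertical arrows the retractions, noted explicitly in Section~\ref{secconc}). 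So the defining IC-equation is exactly encoded in the naturality of $\bar{u}$; this is where I expect the real content to sit, and it will require being careful that the epimorphic-component operations $(\cdot)^\circ$ on both sides match up, using that composing a retraction with $u$ (a bimorphism) and then taking the epimorphic component behaves well — essentially \cite[Section II.2]{cross}.

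Finally, uniqueness of $\alpha$: if $\alpha'$ were another bijection $\omega(\epsilon)\to\omega(\delta)$ with $x\gamma = \gamma(x\alpha')$ for all $x\in\omega(\epsilon)$, then for each $\eta$ both $\eta\alpha$ and $\eta\alpha'$ are idempotent cones with $\gamma\cdot(\eta\alpha) = \eta\cdot\gamma = \gamma\cdot(\eta\alpha')$; left-cancelling using that $\gamma$ has a bimorphic component (Lemma~\ref{lemab}'s argument, $\gamma\mathscr{L}^*\delta$) forces $\delta\ast((\eta\alpha)(c_\delta))^\circ = \delta\ast((\eta\alpha')(c_\delta))^\circ$, and since both $\eta\alpha,\eta\alpha'$ lie in $\omega(\delta)$ and are idempotent, Lemma~\ref{lemgc} gives $\eta\alpha = \eta\alpha'$. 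Hence $\widehat{\mathcal{C}}$ is idempotent-connected. (By \cite{elqallali} one may further note $\alpha$ is in fact an isomorphism of the relevant subsemigroups, but Lemma~\ref{lemico} makes checking the bijection on $\omega(\epsilon)$ sufficient.)
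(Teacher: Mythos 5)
Your proposal is correct and follows essentially the same route as the paper's proof: reduce to Lemma \ref{lemico} with $\gamma^\dagger=\epsilon$, $\gamma^\ast=\delta$ from Lemma \ref{lemab}, use axiom (CC 4) to extend $T^u$ to the isomorphism $T\colon\langle c_\epsilon\rangle\to\langle c_\delta\rangle$, define the bijection $\epsilon^i\mapsto\delta\ast k_i$ via the corresponding retractions, and verify $\epsilon^i\gamma=\gamma(\epsilon^i\beta)$ exactly through the naturality square $h_i(j_{c_{\epsilon^i}}^{c_\epsilon}u)^\circ=u\,k_i$, so that both sides equal $\gamma\ast k_i$. Your additional uniqueness argument via $\mathscr{L}^*$-cancellation is sound, though the paper does not spell it out.
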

\begin{proof}
Let $\gamma=\epsilon\ast u$ be an arbitrary element in 	the semigroup $\widehat{\mathcal{C}}$. By Lemma \ref{lemab}, there exist idempotents $\epsilon,\delta$ such that $\epsilon\mathrel{\mathscr{R}}^*\gamma\mathrel{\mathscr{L}}^*\delta$. Then since $u$ is a bimorphism, the axiom $(CC\: 4)$ implies that $u$ is consistent. Hence $T^u\colon \sigma (c_\epsilon)\to\sigma(c_\delta)$ has a unique  extension $T\colon \langle c_\epsilon \rangle\to  \langle c_\delta \rangle$ which is an isomorphism such that for each $c_{\epsilon^i}\subseteq c_{\epsilon}$, the map $c_{\epsilon^i}\mapsto T(c_{\epsilon^i})$  is a natural isomorphism. In particular, the inclusions $j_{c_{\epsilon^i}}^{c_\epsilon}$ and $j_{T(c_{\epsilon^i})}^{c_\delta}$ split in $\langle c_\epsilon\rangle$ and $\langle c_\delta\rangle$, respectively. This is illustrated by the following commutative diagrams in the categories $\sigma(\mathcal{C})$ and $\langle\mathcal{C}\rangle$, respectively.
\begin{equation*}\label{}
\xymatrixcolsep{2pc}\xymatrixrowsep{3pc}\xymatrix
{
	c_\epsilon \ar[rr]^{u}   	&& c_\delta  &&c_\epsilon\ar@{->}[d]_{h_i}\ar@{->}[rr]^{T(c_{\epsilon})}&&c_\delta\ar@{->}[d]^{k_i}\\       
	c_{\epsilon^i}\ar[u]^{j_{c_{\epsilon^i}}^{c_{\epsilon}}}\ar[rr]^{(j_{c_{\epsilon^i}}^{c_{\epsilon}}u)^\circ} && T^u(c_{\epsilon^i})\ar[u]_{j_{T^u(c_{\epsilon^i})}^{c_\delta}}&&c_{\epsilon^i}\ar@{->}[rr]^{T(c_{\epsilon^i})}&&T(c_{\epsilon^i}) 
}
\end{equation*}
Now, to show that $\widehat{\mathcal{C}}$ is idempotent-connected, by Lemma \ref{lemico}, it suffices to build a bijection $\beta\colon \omega(\epsilon)\to \omega(\delta)$ satisfying $\epsilon^i\gamma=\gamma(\epsilon^i\beta)$ for all $\epsilon^i\in \omega(\epsilon)$. So, define a function $\beta\colon\omega(\epsilon)\to \omega(\delta)$ as follows:
\begin{equation}
\beta\colon \epsilon^i\mapsto\delta\ast k_i \text{ for all }\epsilon^i\in \omega(\epsilon)
\end{equation}  
where $k_i$ is the retraction in $\langle c_\delta \rangle$ such that $j_{T(c_{\epsilon^i})}^{c_\delta} k_i= 1_{T(c_{\epsilon^i})}$. Since $T$ is an isomorphism, $\beta$ is well-defined and is a bijection. Now, for $\epsilon^i\in \omega(\epsilon)$,  
\begin{align*}
\epsilon^i\cdot\gamma &= \epsilon^i\cdot(\epsilon\ast u)\\  
&= \epsilon^i\ast(\epsilon(c_{\epsilon^i})u)^\circ&(\text{by }(\ref{eqnbin}))\\
&= (\epsilon\cdot\epsilon^i)\ast(j_{c_{\epsilon^i}}^{c_\epsilon}u)^\circ &(\text{since }\epsilon^i\mathrel{\omega}\epsilon)\\ 
&=\epsilon\ast\epsilon^i(c_\epsilon) (j_{c_{\epsilon^i}}^{c_\epsilon}u)^\circ&(\text{by }(\ref{eqnbin}))\\
&=\epsilon\ast h_i (j_{c_{\epsilon^i}}^{c_\epsilon}u)^\circ\\
&=\epsilon\ast u k_i & (\text{from the above commutative diagram})\\
&=\gamma\ast k_i.
\end{align*}
Also $$\gamma(\epsilon^i\beta)=\gamma\cdot(\delta\ast k_i)=\gamma\ast(\delta(c_\gamma)k_i)^\circ=\gamma\ast k_i.$$
Hence the lemma.
\end{proof}

\begin{thm}\label{thmconc}
The semigroup $\widehat{\mathcal{C}}$ is concordant.
\end{thm}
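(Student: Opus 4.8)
The plan is to assemble the three defining conditions of a concordant semigroup for $\widehat{\mathcal{C}}$ from the lemmas already established, so the proof of Theorem \ref{thmconc} should be short. Recall that a concordant semigroup must be (a) abundant, (b) idempotent-connected, and (c) such that its idempotents generate a regular subsemigroup.

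First I would verify abundance. By Lemma \ref{lemconssemi}, every idempotent cone $\epsilon$ satisfies $\epsilon(c_\epsilon)=1_{c_\epsilon}$, and axiom $(CC\,6)$ guarantees that for each object $c\in v\mathcal{C}$ there is such an idempotent cone with vertex $c$. Now take an arbitrary $\gamma=\epsilon\ast u\in\widehat{\mathcal{C}}$, where $u\colon c_\epsilon\to d$ is a bimorphism, so $c_\gamma=d$. Choose, using $(CC\,6)$, an idempotent cone $\delta$ with $c_\delta=d=c_\gamma$. Then Lemma \ref{lemab} gives $\epsilon\,\mathscr{R}^*\,\gamma\,\mathscr{L}^*\,\delta$, and $\epsilon,\delta$ are idempotents of $\widehat{\mathcal{C}}$. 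Hence every $\mathscr{R}^*$-class and every $\mathscr{L}^*$-class of $\widehat{\mathcal{C}}$ contains an idempotent, so $\widehat{\mathcal{C}}$ is abundant.

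Next, idempotent-connectedness is exactly Lemma \ref{lemic}, so nothing further is needed there. Finally, for the regular-subsemigroup condition, I would invoke Proposition \ref{pronormal}: the semigroup $\widehat{\overline{\mathcal{C}}}$ of normal cones in $\overline{\mathcal{C}}$ is a full regular subsemigroup of $\widehat{\mathcal{C}}$. Since it is full, $E(\widehat{\overline{\mathcal{C}}})=E(\widehat{\mathcal{C}})$, and by the remarks following Lemma \ref{lemgc} the quasi-orders $\omega^l,\omega^r$ (and hence the biorder structure) of $E(\widehat{\mathcal{C}})$ and $E(\widehat{\overline{\mathcal{C}}})$ coincide. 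Because $\widehat{\overline{\mathcal{C}}}$ is regular, its biordered set $E(\widehat{\overline{\mathcal{C}}})=E(\widehat{\mathcal{C}})$ is a regular biordered set, which by \cite{mem} means precisely that the idempotents of $\widehat{\mathcal{C}}$ generate a regular subsemigroup. Combining the three points, $\widehat{\mathcal{C}}$ is an idempotent-connected abundant semigroup whose idempotents generate a regular subsemigroup, i.e. concordant.

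I do not anticipate a serious obstacle, since all the substantive work has been done in Lemmas \ref{lemconssemi}--\ref{lemic} and Proposition \ref{pronormal}; the only care needed is to make sure the idempotent $\delta$ used for abundance is supplied by $(CC\,6)$ with the correct vertex $c_\gamma$, and to cite the equality $E(\widehat{\overline{\mathcal{C}}})=E(\widehat{\mathcal{C}})$ together with the coincidence of quasi-orders so that the regularity of the biordered set transfers from the subsemigroup back to $\widehat{\mathcal{C}}$. If one wanted to be fully self-contained one could instead recall that a full subsemigroup sharing the biordered set of a concordant-type semigroup forces the ambient idempotent-generated subsemigroup to equal that of the regular full subsemigroup, but citing Proposition \ref{pronormal} and the biorder identification is the cleanest route.
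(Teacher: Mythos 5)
Your proposal is correct and follows essentially the same route as the paper, which simply cites Proposition \ref{pronormal}, Lemma \ref{lemab} and Lemma \ref{lemic} to assemble the three defining conditions; your extra care in using $(CC\,6)$ to supply $\delta$ with vertex $c_\gamma$ and in transferring regularity of the biordered set via fullness is just a more explicit spelling-out of the same argument.
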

\begin{proof}
By Proposition \ref{pronormal}, Lemma \ref{lemab} and Lemma \ref{lemic}, the semigroup $\widehat{\mathcal{C}}$ is an idempotent-connected abundant semigroup whose idempotents generate a regular subsemigroup. Hence the theorem.
\end{proof}

As in \cite{cross}, a functor $F\colon\mathcal{C}\to \mathcal{D}$ is said to be $v$-\emph{surjective}, $v$-\emph{injective} or $v$-\emph{bijective} if the object map $vF$ has the corresponding property. A functor $F\colon\mathcal{C}\to \mathcal{D}$ is said to be an \emph{isomorphism} if it is $v$-bijective and fully-faithful. Two consistent (normal) categories are said to be isomorphic if there is an inclusion preserving isomorphism between them. The following theorem follows from the similar result \cite[Theorem III.19]{cross} for normal categories.

\begin{thm}
Let $\mathcal{C}$ be a consistent category and $\widehat{\mathcal{C}}$ be its associated concordant semigroup of consistent cones. Define $F\colon\mathcal{C}\to \mathbb{L}(\widehat{\mathcal{C}})$ as follows:
\begin{align*}
vF(c)= \widehat{\mathcal{C}}\epsilon \text{ and }
F(f)= \rho(\epsilon,\epsilon\ast f^\circ, \epsilon')
\end{align*}
where $\epsilon,\epsilon'\in E(\widehat{\mathcal{C}})$ such that $c_\epsilon=c, c_{\epsilon'}=d$ and  $f\colon c\to d$. Then $F$ is an isomorphism of consistent categories.
\end{thm}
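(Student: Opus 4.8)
The plan is to verify that the assignment $F\colon\mathcal{C}\to\mathbb{L}(\widehat{\mathcal{C}})$ is a well-defined, inclusion-preserving isomorphism of consistent categories, closely paralleling the proof of \cite[Theorem III.19]{cross}, and pointing out the places where the \emph{consistent} setting (as opposed to the normal one) requires genuinely new input, namely where the epimorphic component $f^\circ$ and consistent factorisations replace the normal factorisation. First I would check that $F$ is well-defined on objects and morphisms. For objects, distinct objects $c\in v\mathcal{C}$ determine distinct idempotent cones up to the relation $c_\epsilon=c_{\epsilon'}$; by Lemma \ref{lemgc} and Lemma \ref{lemab}, $c_\epsilon=c_{\epsilon'}$ forces $\epsilon\mathscr{L}^*\epsilon'$, so $\widehat{\mathcal{C}}\epsilon=\widehat{\mathcal{C}}\epsilon'$ and $vF$ does not depend on the chosen idempotent cone with vertex $c$. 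For morphisms, given $f\colon c\to d$ with consistent factorisation $f=quj$, write $f^\circ=qu$; I need $\epsilon\ast f^\circ$ to be a well-defined element of $\widehat{\mathcal{C}}(\widehat{\mathcal{C}}\epsilon,\widehat{\mathcal{C}}\epsilon')$, i.e. that $\epsilon\ast f^\circ$ lies in $\epsilon\,\widehat{\mathcal{C}}\,\epsilon'$ and that $\rho(\epsilon,\epsilon\ast f^\circ,\epsilon')$ is independent of the choice of $\epsilon,\epsilon'$ and of the consistent factorisation. Independence of the factorisation is exactly the uniqueness of the epimorphic component $f^\circ$ and $im\,f$ noted after Definition \ref{consfact} (\cite[Section II.2]{cross}); independence of $\epsilon,\epsilon'$ follows from the $\mathscr{L}^*$-class description above together with the morphism identification $\rho(e,u,f)=\rho(g,v,h)$ iff $e\mathscr{L}g$, $f\mathscr{L}h$, $v=gu$ recorded in Section \ref{secconc}.

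Next I would verify functoriality: $F(1_c)=\rho(\epsilon,\epsilon\ast 1_c^\circ,\epsilon)=\rho(\epsilon,\epsilon,\epsilon)=1_{\widehat{\mathcal{C}}\epsilon}$ since $1_c^\circ=1_c$ and $\epsilon\ast 1_{c}=\epsilon$; and for composable $f\colon c\to d$, $g\colon d\to e$ one must show $F(f)F(g)=F(fg)$, i.e. $\rho(\epsilon,\epsilon\ast f^\circ,\epsilon')\rho(\epsilon',\epsilon'\ast g^\circ,\epsilon'')=\rho(\epsilon,(\epsilon\ast f^\circ)(\epsilon'\ast g^\circ),\epsilon'')$ equals $\rho(\epsilon,\epsilon\ast(fg)^\circ,\epsilon'')$. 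Unwinding the binary composition (\ref{eqnbin}), $(\epsilon\ast f^\circ)\cdot(\epsilon'\ast g^\circ)=\epsilon\ast(f^\circ\,\epsilon'(c_{\epsilon\ast f^\circ})\,g^\circ)^\circ$, and since $c_{\epsilon\ast f^\circ}=im\,f\subseteq d=c_{\epsilon'}$ we have $\epsilon'(im\,f)=j_{im\,f}^{d}$; so the bracket becomes $(f^\circ\,j_{im\,f}^{d}\,g^\circ)^\circ=(f^\circ\,g^\circ)^\circ$. Then one checks $(f^\circ g^\circ)^\circ=(fg)^\circ$ as epimorphic components, using that $f=f^\circ j$ with $j$ an inclusion and $f^\circ g^\circ=f^\circ j\,g^\circ$ after absorbing the inclusion — here one uses that a morphism and its epimorphic component have the same epimorphic component, and the uniqueness of $im$. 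This is the computational heart; it is a direct translation of the corresponding step in \cite{cross} with ``isomorphic component'' replaced by ``epimorphic component''.

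After functoriality, I would establish that $F$ is inclusion-preserving and bijective on objects and fully faithful. Inclusion preservation: if $c\subseteq d$ in $\mathcal{C}$ then $j_c^d$ is a monomorphism with $(j_c^d)^\circ=j_c^d$ and $im\,j_c^d=c$, so $F(j_c^d)=\rho(\epsilon,\epsilon\ast j_c^d,\epsilon')$; by Lemma \ref{lemgc}, $c\subseteq d$ gives $\epsilon\,\omega^l\,\epsilon'$, hence $\widehat{\mathcal{C}}\epsilon\subseteq\widehat{\mathcal{C}}\epsilon'$ and one identifies $\epsilon\ast j_c^d$ with $\epsilon$ itself (both components at $c_\epsilon$ equal $j_c^d$ composed appropriately), so $F(j_c^d)=\rho(\epsilon,\epsilon,\epsilon')=j_{\widehat{\mathcal{C}}\epsilon}^{\widehat{\mathcal{C}}\epsilon'}$, the inclusion in $\mathbb{L}(\widehat{\mathcal{C}})$. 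Surjectivity of $vF$: every object of $\mathbb{L}(\widehat{\mathcal{C}})$ is $\widehat{\mathcal{C}}\epsilon$ for an idempotent $\epsilon\in E(\widehat{\mathcal{C}})$, and by Lemma \ref{lemconssemi} $\epsilon(c_\epsilon)=1_{c_\epsilon}$, so $\widehat{\mathcal{C}}\epsilon=vF(c_\epsilon)$; injectivity up to the inclusion-preserving-isomorphism convention follows from the $\mathscr{L}^*$-description of $c_\epsilon=c_{\epsilon'}$. Full faithfulness: a morphism $\widehat{\mathcal{C}}\epsilon\to\widehat{\mathcal{C}}\epsilon'$ in $\mathbb{L}(\widehat{\mathcal{C}})$ has the form $\rho(\epsilon,\gamma,\epsilon')$ with $\gamma\in\epsilon\widehat{\mathcal{C}}\epsilon'$; writing $\gamma=\epsilon\cdot\gamma\cdot\epsilon'$ and using $\gamma=\epsilon''\ast w$ one produces a morphism $g\colon c_\epsilon\to c_{\epsilon'}$ in $\mathcal{C}$ whose epimorphic component realises $\gamma$, giving surjectivity of $F$ on hom-sets; injectivity follows because $\epsilon\ast f^\circ=\epsilon\ast(f')^\circ$ with the cones having the same vertices forces $f^\circ=(f')^\circ$ hence $f=f'$ once the inclusion tails are recovered from the (co)domains.

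The main obstacle I anticipate is the bookkeeping in full faithfulness: recovering from an arbitrary element $\gamma=\epsilon''\ast w\in\epsilon\widehat{\mathcal{C}}\epsilon'$ an honest morphism $g$ of $\mathcal{C}$ with $F(g)=\rho(\epsilon,\gamma,\epsilon')$, one must check that $c_\epsilon\to c_{\gamma}\hookrightarrow c_{\epsilon'}$ assembles correctly, that the chosen retraction-bimorphism-inclusion pieces give a consistent factorisation whose epimorphic component is exactly $\gamma$, and that all of this is independent of the representation $\gamma=\epsilon''\ast w$. Everything else is a faithful transcription of \cite[Theorem III.19]{cross}; the one structural point to keep stressing is that \cite[Corollary II.8]{cross} (bimorphism $=$ isomorphism) is \emph{not} available here, so wherever the normal-category argument silently upgrades a bimorphism to an isomorphism, one instead uses axiom $(CC\,4)$ and the uniqueness of the epimorphic component to stay within the consistent framework.
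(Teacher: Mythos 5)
Your proposal is correct and follows essentially the same route as the paper, which proves this theorem simply by appealing to \cite[Theorem III.19]{cross}: your outline is precisely that transcription, with epimorphic components of consistent factorisations replacing the isomorphism parts and axiom (CC 4) plus uniqueness of $f^\circ$ standing in where the normal-category argument silently uses ``bimorphism $=$ isomorphism''. Two small touch-ups: $(j_c^d)^\circ=1_c$ rather than $j_c^d$ (your conclusion $F(j_c^d)=\rho(\epsilon,\epsilon,\epsilon')$ is unaffected), and the hom-set surjectivity you flag as the main obstacle is settled by the explicit assignment $\rho(\epsilon,\gamma,\epsilon')\mapsto\gamma(c_\epsilon)\,j_{c_\gamma}^{c_{\epsilon'}}$, the same bijection the paper records in (\ref{eqngam}) for the dual category.
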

The above theorem and the discussion in Section \ref{secconc} gives the following corollary.
\begin{cor}
A category is consistent if and only if it is isomorphic to the category $\mathbb{L}(S)$ for some concordant semigroup $S$.
\end{cor}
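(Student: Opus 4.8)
The plan is to obtain the corollary as an immediate consequence of the two results immediately preceding it together with the analysis of Section~\ref{secconc}; essentially no new computation is required, only a routine invariance check.

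For the forward implication, suppose $\mathcal{C}$ is a consistent category. By Theorem~\ref{thmconc}, its semigroup $\widehat{\mathcal{C}}$ of consistent cones is concordant, and by the immediately preceding theorem the functor $F\colon\mathcal{C}\to\mathbb{L}(\widehat{\mathcal{C}})$ is an isomorphism of consistent categories. Hence $\mathcal{C}$ is isomorphic to $\mathbb{L}(S)$ with $S=\widehat{\mathcal{C}}$ a concordant semigroup, and this direction is done with nothing further to prove.

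For the converse, let $S$ be a concordant semigroup. The discussion of Section~\ref{secconc}, culminating in the verification of axioms $(CC\ 1)$--$(CC\ 6)$ for $\mathbb{L}(S)$, shows that $\mathbb{L}(S)$ is a consistent category. Thus it suffices to observe that being a consistent category is invariant under inclusion-preserving isomorphism: if $\Phi\colon\mathcal{D}\to\mathbb{L}(S)$ is such an isomorphism, then $\mathcal{D}$ is consistent. Indeed, $\Phi$ is bijective on objects and arrows and preserves inclusions, so it carries the distinguished preorder of $\mathcal{D}$ onto $\mathcal{P}_{\mathbb{L}(S)}$, giving $(CC\ 1)$; since monomorphism, epimorphism, bimorphism, isomorphism and one-sided inverse are purely categorical notions, $\Phi$ sends retractions to retractions, images and epimorphic components to images and epimorphic components, and therefore consistent and normal factorisations to consistent and normal factorisations, which yields $(CC\ 2)$, $(CC\ 3)$ and $(CC\ 5)$; the functor $T^u$, its extension $T\colon\langle c\rangle\to\langle d\rangle$, and the natural-isomorphism condition defining consistency of a bimorphism are all phrased solely in terms of inclusions, retractions, images and isomorphisms, so they transfer across $\Phi$, giving $(CC\ 4)$; and $\Phi$ transports consistent cones to consistent cones, since the cone identities $j_a^b\gamma(b)=\gamma(a)$ and the existence of a bimorphic component are preserved, giving $(CC\ 6)$. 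Hence $\mathcal{D}$ is consistent, and combining with the forward implication proves the corollary.

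The only point requiring a little care --- and it is mild --- is spelling out that every ingredient appearing in the axioms (the subobject preorder, the splitting of inclusions, the image and epimorphic component, the canonical extension $T$ of $T^u$, and the notion of consistent cone) is an isomorphism invariant of a category with subobjects. Once this is recorded, both implications follow at once, the substantive content having already been carried by Theorem~\ref{thmconc}, the preceding isomorphism theorem, and Section~\ref{secconc}; I do not expect any genuine obstacle.
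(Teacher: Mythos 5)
Your proposal is correct and follows the paper's own route: the forward direction is exactly the combination of Theorem \ref{thmconc} with the preceding isomorphism theorem $F\colon\mathcal{C}\to\mathbb{L}(\widehat{\mathcal{C}})$, and the converse is the Section \ref{secconc} verification that $\mathbb{L}(S)$ satisfies $(CC\,1)$--$(CC\,6)$, which is precisely what the paper invokes when it says the corollary follows from ``the above theorem and the discussion in Section \ref{secconc}.'' The only addition is your explicit check that consistency is invariant under inclusion-preserving isomorphism, which the paper leaves implicit and which you handle correctly.
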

Recall from \cite{clif} that a right regular representation of a semigroup $S$ is a homomorphism $\rho\colon a\mapsto\rho_a$ of $S$ into the full transformation semigroup $\mathscr{T}_S$. Then $\rho\colon S\to S_\rho$ is a surjective homomorphism where $S_\rho$ is the image of $\rho$. The following proposition is a direct generalisation of \cite[Theorem III.16]{cross}.
\begin{pro}\label{prosr}
Let $S$ be a concordant semigroup. Then the map $a\mapsto\rho^a$ (where $\rho^a$ is the principal cone determined by $a$) defines a homomorphism $\tilde{\rho}\colon S\to \widehat{\mathbb{L}(S)}$. Also the map $\rho_a\mapsto\rho^a$ defines an injective homomorphism $\phi\colon S_\rho\to \widehat{\mathbb{L}(S)}$ such that the diagram below commutes.
\begin{equation*}\label{}
\xymatrixcolsep{2pc}\xymatrixrowsep{3pc}\xymatrix
{
	&S \ar[rd]^{\tilde{\rho}}\ar[ld]_{{\rho}} 	& \\       
	S_\rho\ar[rr]_{{\phi}}&& \widehat{\mathbb{L}(S)} }
\end{equation*}
In particular $S$ is isomorphic to a subsemigroup of $\widehat{\mathbb{L}(S)}$ via $\tilde{\rho}$ if and only if $\rho$ is injective.
\end{pro}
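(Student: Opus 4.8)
The plan is to verify the homomorphism property of $\tilde\rho$ first, then deduce everything else. First I would show that for arbitrary $a,b\in S$ and each object $Se\in v\mathbb{L}(S)$, the composite cone $\rho^a\cdot\rho^b$ agrees componentwise with $\rho^{ab}$. Fix $f\in E(L^*_a)$ so that $c_{\rho^a}=Sf$, and fix $g\in E(L^*_b)$. By definition (\ref{eqnbin}), $(\rho^a\cdot\rho^b)(Se)=\rho^a(Se)\,(\rho^b(Sf))^\circ=\rho(e,ea,f)\,(\rho(f,fb,g))^\circ$. The key computation is to identify the epimorphic component of $\rho(f,fb,g)$: since $S$ is abundant there is $h\in E(R^*_{fb})$ with $h\,\omega\, f$ (as in the construction of the principal cone before Definition of consistent cone), and then the consistent factorisation of $\rho(f,fb,g)$ gives $\rho(f,fb,g)=\rho(f,h,h)\,\rho(h,fb,g')\,j$, whence $(\rho(f,fb,g))^\circ=\rho(f,h,h)\rho(h,fb,g')=\rho(f,fb,k)$ for the appropriate $k\in E(L^*_{fb})$; composing, $\rho(e,ea,f)\rho(f,fb,k)=\rho(e,eafb,k)=\rho(e,eab,k)$ (using $af=a$, so $afb=ab$). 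Since $fb\,\mathscr{L}^*\,ab$ in $S$ because $a\,\mathscr{L}^*\,f$ is a right congruence-compatible relation and $f\in E(L^*_a)$ forces $fb\,\mathscr{L}^*\,ab$, we get $k\in E(L^*_{ab})$, so $\rho(e,eab,k)=\rho^{ab}(Se)$. This proves $\rho^a\cdot\rho^b=\rho^{ab}$, i.e. $\tilde\rho$ is a homomorphism.

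Next I would handle the factorisation $\tilde\rho=\rho\circ\phi$ and the injectivity of $\phi$. The map $\rho\colon S\to S_\rho$ is the right regular representation, $a\mapsto\rho_a$, which is surjective by definition, and $\rho_a=\rho_b$ means $xa=xb$ for all $x\in S^1$, in particular $a=b$ when... no: $\rho$ need not be injective in general, but one checks directly that $\rho_a=\rho_b$ implies $\rho^a=\rho^b$: indeed $\rho_a=\rho_b$ gives $ea=eb$ for every idempotent $e$ (taking $x=e$), and also $L^*_a=L^*_b$ since $\mathscr{L}^*$ is determined by right multiplication (Lemma 2.1/Lemma X.1.6 of Lyapin), so the same idempotent $f$ can be chosen and $\rho^a(Se)=\rho(e,ea,f)=\rho(e,eb,f)=\rho^b(Se)$ for every $Se$; hence $\phi\colon\rho_a\mapsto\rho^a$ is a well-defined map $S_\rho\to\widehat{\mathbb{L}(S)}$, and it is a homomorphism because $\rho$ and $\tilde\rho$ are and $\rho$ is surjective. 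For injectivity of $\phi$: if $\rho^a=\rho^b$ then in particular their vertices coincide, $Sf=Sg$ where $f\in E(L^*_a),g\in E(L^*_b)$, so $a\,\mathscr{L}^*\,b$; moreover comparing components at $Sf$, $\rho(f,fa,f)=\rho(f,fb,f)$ forces $fa=fb$, and since $f$ is a left identity modulo $\mathscr{L}^*$ within the $\mathscr{L}^*$-class of $a$ (Lemma \ref{lemconid} and its dual), $fa=a$ and $fb=b$, giving $a=b$. Thus $\phi$ is injective, and the triangle $\tilde\rho=\rho\phi$ commutes by construction of $\phi$.

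Finally, the last sentence follows formally: $\tilde\rho=\rho\circ\phi$ with $\phi$ injective, so $\tilde\rho$ is injective if and only if $\rho$ is injective, and in that case $\tilde\rho$ is an isomorphism of $S$ onto its image $S\tilde\rho\subseteq\widehat{\mathbb{L}(S)}$.

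The main obstacle I anticipate is the component-wise computation of $\rho^a\cdot\rho^b$: one must correctly extract the epimorphic component of $\rho(f,fb,g)$ using the abundance of $S$ and verify that the resulting idempotent lies in $E(L^*_{ab})$, which rests on the compatibility of $\mathscr{L}^*$ with right multiplication together with the identity $af=a$. Everything after that — the factorisation through $S_\rho$, well-definedness and injectivity of $\phi$, and the final equivalence — is routine diagram-chasing and an appeal to Lemma \ref{lemconid} and its dual, essentially parallel to \cite[Theorem III.16]{cross}.
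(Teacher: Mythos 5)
Your first paragraph (the componentwise computation showing $\rho^a\cdot\rho^b=\rho^{ab}$, via the epimorphic component $(\rho(f,fb,g))^\circ=\rho(f,fb,k)$ with $k\in E(L^*_{fb})$, the identity $af=a$, and the fact that $\mathscr{L}^*$ is a right congruence) is correct, and it is exactly the computation behind \cite[Theorem III.16]{cross}, which is all the paper itself invokes; it also tacitly needs the (easy) remark that $\rho^a=\rho^{a^\dagger}\ast\rho(a^\dagger,a,f)$ lies in $\widehat{\mathbb{L}(S)}$, but that is contained in the discussion preceding the proposition. The first genuine problem is in the well-definedness of $\phi$: from $\rho_a=\rho_b$ you deduce $L^*_a=L^*_b$ ``since $\mathscr{L}^*$ is determined by right multiplication,'' but the cited lemma characterises $a\,\mathscr{L}^*\,b$ through the equalities $ax=ay$, whereas $\rho_a=\rho_b$ only gives information about left translates $xa=xb$; the implication is true but needs abundance: taking $x=a^\dagger\in E(R^*_a)$ and $x=b^\dagger\in E(R^*_b)$ yields $a=a^\dagger a=a^\dagger b$ and $b=b^\dagger b=b^\dagger a$, so $a\,\mathscr{L}\,b$ and hence $a\,\mathscr{L}^*\,b$, after which your componentwise identification of $\rho^a$ with $\rho^b$ is fine.

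The second problem is a genuine error: your proof of injectivity of $\phi$ purports to show $\rho^a=\rho^b\Rightarrow a=b$, and the key step ``$fa=a$ and $fb=b$'' is false --- an idempotent $f\in E(L^*_a)$ is a \emph{right} identity for $a$ ($af=a$, Lemma \ref{lemconid}), not a left identity. Moreover the conclusion $a=b$ is too strong: it would make $\tilde{\rho}$ injective for every concordant semigroup, contradicting the final clause of the proposition (and it fails already for a two-element left zero semigroup, which is concordant and in which all the cones $\rho^a$ coincide). Injectivity of $\phi$ only requires $\rho^a=\rho^b\Rightarrow\rho_a=\rho_b$, and this is what must be proved: comparing components gives $\rho(e,ea,f)=\rho(e,eb,f)$, hence $ea=eb$ for every idempotent $e$, and then for arbitrary $x\in S$ one uses abundance again, choosing $x^*\in E(L^*_x)$ so that $xa=(xx^*)a=x(x^*a)=x(x^*b)=xb$. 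With these two repairs (both appeals to abundance), the remainder of your argument --- the commuting triangle, the homomorphism property of $\phi$, and the formal equivalence in the last sentence --- goes through and parallels the proof of \cite[Theorem III.16]{cross} to which the paper defers.
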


\begin{rmk}\label{rmkdual}
Dually, we can define the consistent category $\mathbb{R}(S)$ of principal right ideals generated by the idempotents of a concordant semigroup $S$ as follows:
\begin{eqnarray*}
	v\mathbb{R}(S) =& \{ eS : e \in E(S) \}\\
	\mathbb{R}(S)(eS,fS) =& \{ \lambda(e,u,f) : u\in fSe \}.
\end{eqnarray*}
It can be easily shown that the dual properties regarding the category $\mathbb{L}(S)$ hold for the category $\mathbb{R}(S)$.
\end{rmk}

\section{Consistent dual and cross-connections of a concordant semigroup}\label{secdual}

We have seen in the previous sections that given a concordant semigroup $S$, the categories $\mathbb{L}(S)$ and $\mathbb{R}(S)$ are consistent categories. So a natural converse question arises: given two consistent categories $\mathcal{C}$ and $\mathcal{D}$, under what conditions can we assert the existence of a concordant semigroup $S$ such that $\mathcal{C}$ and $\mathcal{D}$ are isomorphic to $\mathbb{L}(S)$ and $\mathbb{R}(S)$, respectively. To answer this question, we first need to understand the relationship the consistent categories $\mathbb{L}(S)$ and $\mathbb{R}(S)$. This relationship will be described in this section using a pair of functors $\Gamma_S$ and $\Delta_S$, which shall be called a {cross-connection}. 

To this end, we need to introduce the notion of a {dual category} associated with a consistent category. This will generalise the notion of a normal dual of a normal category \cite{cross} and also help us characterise the consistent category $\mathbb{R}(\widehat{\mathcal{C}})$ associated with the concordant semigroup $\widehat{\mathcal{C}}$. Recall that (see \cite{mac}) given a category $\mathcal{C}$, the class of all functors from $\mathcal{C}$ to the category $\mathbf{Set}$ with natural transformations as morphisms forms a category $[\mathcal{C},\mathbf{Set}]$. 

The {consistent dual} $\mathcal{C}^*$ of a consistent category $\mathcal{C}$ is defined as a subcategory of the category $[\mathcal{C},\mathbf{Set}] $ such that the objects of $\mathcal{C}^*$ are certain special set-valued functors called $H$-functors.

\subsection{$H$-functor}
Let $\epsilon$ be an idempotent consistent cone in a consistent category. Then for each $c\in v\mathcal{C}$ and $g\colon c\to c'$, we define an $H$-functor $H(\epsilon;-)\colon \mathcal{C}\to \mathbf{Set}$ as follows:
\begin{equation} \label{eqnH}
	\begin{split}
	H({\epsilon};{c})&= \{\epsilon\ast f^\circ : f \in \mathcal{C}(c_{\epsilon},c)\} \text{ and }\\
	H({\epsilon};{g}) \colon H({\epsilon};{c}) &\to H({\epsilon};{d}) \text{ given by }\epsilon\ast f^\circ \mapsto \epsilon\ast (fg)^\circ
	\end{split}
\end{equation}
It can be shown (as in \cite[Lemma III.6]{cross}) that  given an $H$-functor $H(\epsilon;-)$ in a consistent category $\mathcal{C}$, for every pair $(d,\delta)$ such that $\delta\in H(\epsilon;d)$, there exists a unique morphism $f\colon c_\epsilon\to d$ such that $H(\epsilon;f)\colon\epsilon\mapsto\delta$.
Hence the consistent cone $\epsilon$ (or the pair $(c_\epsilon,\epsilon)$, to be precise) will be a universal element for the functor $H(\epsilon;-)$ in $H(\epsilon;c_\epsilon)$. This implies that the functor $H(\epsilon;-)$ is a representable functor, i.e., there exists a natural isomorphism $\eta_\epsilon\colon H(\epsilon;-) \to \mathcal{C}(c_\epsilon,-)$ where $\mathcal{C}(c_\epsilon,-)$ is the covariant hom-functor determined by the object $c_\epsilon$. Observe that the natural isomorphism may be explicitly defined by $\eta_\epsilon\colon c\mapsto (\epsilon\ast f^\circ \mapsto f^\circ)$ for an arbitrary object $c\in v\mathcal{C}$. Now, using Lemma \ref{lemgc} and \cite[Proposition III.7]{cross}, we have the following proposition.
\begin{pro}\label{progreen}
Let $\epsilon,\epsilon'$ be idempotent consistent cones in the semigroup $\widehat{\mathcal{C}}$. Then 
\begin{enumerate}
	\item $\epsilon\mathrel{\mathscr{L}}\epsilon'$ if and only if $c_\epsilon=c_{\epsilon'}$.
	\item $\epsilon\mathrel{\mathscr{R}}\epsilon'$ if and only if $H(\epsilon;-)=H({\epsilon'};-)$.
\end{enumerate}
\end{pro}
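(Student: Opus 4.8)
The plan is to prove the two statements of Proposition~\ref{progreen} separately, working in the semigroup $\widehat{\mathcal{C}}$ and translating the Green relations $\mathscr{L}$ and $\mathscr{R}$ into conditions about vertices and $H$-functors. For $(1)$, I would first recall from Lemma~\ref{lemconssemi} that an idempotent cone $\epsilon$ satisfies $\epsilon(c_\epsilon)=1_{c_\epsilon}$, and from Lemma~\ref{lemgc} that $\epsilon\,\omega^l\,\epsilon'$ iff $c_\epsilon\subseteq c_{\epsilon'}$. Since $\mathscr{L}$ restricted to idempotents equals $\omega^l\cap(\omega^l)^{-1}$, we get $\epsilon\mathscr{L}\epsilon'$ iff $c_\epsilon\subseteq c_{\epsilon'}$ and $c_{\epsilon'}\subseteq c_\epsilon$; because $\sigma\mathcal{C}$ is a strict preorder (antisymmetric), this forces $c_\epsilon=c_{\epsilon'}$, and conversely if the vertices coincide then each of $\epsilon,\epsilon'$ is obtained from the other by composing with the identity $1_{c_\epsilon}$, giving $\omega^l$ both ways. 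This part is essentially immediate from the cited lemma.

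For $(2)$, the idea is to use the $\omega^r$-characterisation from Lemma~\ref{lemgc}: $\epsilon\,\omega^r\,\epsilon'$ iff $\epsilon(c_{\epsilon'})$ is an epimorphism with $\epsilon=\epsilon'\ast\epsilon(c_{\epsilon'})$. So $\epsilon\mathscr{R}\epsilon'$ means $\epsilon=\epsilon'\ast\epsilon(c_{\epsilon'})^\circ$ and $\epsilon'=\epsilon\ast\epsilon'(c_\epsilon)^\circ$ (writing the epimorphic components for safety, though under $\omega^r$ the component at the other vertex is already epic). Assume first $\epsilon\mathscr{R}\epsilon'$. I would show $H(\epsilon;c)=H(\epsilon';c)$ for every $c\in v\mathcal{C}$ by exhibiting mutually inverse maps: given $\epsilon\ast f^\circ\in H(\epsilon;c)$ with $f\colon c_\epsilon\to c$, form $g=\epsilon'(c_\epsilon)^\circ f\colon c_{\epsilon'}\to c$; then $\epsilon'\ast g^\circ=\epsilon'\ast(\epsilon'(c_\epsilon)f)^\circ=(\epsilon'\ast\epsilon'(c_\epsilon)^\circ)\ast f^\circ=\epsilon\ast f^\circ$, so $H(\epsilon;c)\subseteq H(\epsilon';c)$, and symmetrically the reverse inclusion. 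Compatibility with the functor action $H(\epsilon;g)$ is routine since both are defined by the same formula $\zeta\mapsto\zeta\ast(\cdot\,g)^\circ$ on cones. Hence $H(\epsilon;-)=H(\epsilon';-)$ as set-valued functors.

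For the converse, suppose $H(\epsilon;-)=H(\epsilon';-)$. Evaluating at $c=c_{\epsilon'}$: the identity cone component $1_{c_{\epsilon'}}=\epsilon'(c_{\epsilon'})\in H(\epsilon';c_{\epsilon'})=H(\epsilon;c_{\epsilon'})$, so there is $f\colon c_\epsilon\to c_{\epsilon'}$ with $\epsilon\ast f^\circ=\epsilon'$; by the universal-element property of the $H$-functor (quoted from \cite[Lemma III.6]{cross}, via the natural isomorphism $\eta_\epsilon$), $f^\circ$ is in fact an epimorphism and $\epsilon'=\epsilon\ast f^\circ$, whence $\epsilon'\,\omega^r\,\epsilon$ by Lemma~\ref{lemgc}. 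Symmetrically, evaluating the equality of functors at $c=c_\epsilon$ gives $\epsilon\,\omega^r\,\epsilon'$, so $\epsilon\mathscr{R}\epsilon'$. The main obstacle I anticipate is the bookkeeping in the converse: one must be careful that the morphism $f$ furnished by the set-theoretic equality of $H$-sets really has epimorphic component (not merely that $\epsilon\ast f^\circ=\epsilon'$ as cones) so that Lemma~\ref{lemgc}'s $\omega^r$-criterion applies cleanly — this is exactly where the representability of $H(\epsilon;-)$ and the uniqueness clause in \cite[Lemma III.6]{cross} do the work, and it parallels the argument in \cite[Proposition III.7]{cross} for normal categories, so invoking that result (as the statement already suggests) should discharge it.
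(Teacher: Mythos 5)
Your overall route is the same as the paper's: both parts are read off from Lemma \ref{lemgc} (the paper simply cites that lemma together with its normal-category analogue \cite[Proposition III.7]{cross}), and your part (1), your converse direction in part (2), and the functoriality remark are exactly that argument written out; these are fine.

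There is, however, a concrete error in the key display of the forward direction of (2). For the cone $\epsilon'$ with vertex $c_{\epsilon'}$, the component $\epsilon'(c_\epsilon)$ is a morphism $c_\epsilon\to c_{\epsilon'}$, so the composite $\epsilon'(c_\epsilon)f$ with $f\colon c_\epsilon\to c$ is undefined (and its domain could not be $c_{\epsilon'}$ as you claim); likewise $\epsilon'\ast\epsilon'(c_\epsilon)^\circ$ is not defined, and the identity you invoke there is not what Lemma \ref{lemgc} gives: the lemma yields $\epsilon=\epsilon'\ast\epsilon(c_{\epsilon'})$ and $\epsilon'=\epsilon\ast\epsilon'(c_\epsilon)$ (you stated these correctly in your setup and then swapped the cones in the computation). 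The transfer morphism must be $g=\epsilon(c_{\epsilon'})f\colon c_{\epsilon'}\to c$, and the computation then reads $\epsilon'\ast g^\circ=\epsilon'\ast\bigl(\epsilon(c_{\epsilon'})f\bigr)^\circ=\bigl(\epsilon'\ast\epsilon(c_{\epsilon'})\bigr)\ast f^\circ=\epsilon\ast f^\circ$, where one uses that $\epsilon(c_{\epsilon'})$ is an epimorphism, so that $\bigl(\epsilon(c_{\epsilon'})f\bigr)^\circ=\epsilon(c_{\epsilon'})f^\circ$ by uniqueness of epimorphic components. With this repair, the inclusion $H(\epsilon;c)\subseteq H(\epsilon';c)$ follows, the reverse inclusion is symmetric, and the rest of your argument goes through as intended.
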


Hence given a consistent category $\mathcal{C}$, we define the \emph{consistent dual} $\mathcal{C}^*$ (often referred to as just \emph{dual} in the sequel) as the full subcategory of $[\mathcal{C},\mathbf{Set}] $ such that
$$v\mathcal{C}^*=\{H(\epsilon;-):\epsilon\in E(\widehat{\mathcal{C}})\}.$$
Then  $\mathcal{C}^*$ is a category with subobjects in which the inclusion relation among the functors is defined as follows. Let $H(\epsilon;-),H(\epsilon';-)\colon\mathcal{C} \to \mathbf{Set}$, we say $H(\epsilon;-)$ is a sub functor of $H(\epsilon';-)$ (and write $H(\epsilon;-)\subseteq H(\epsilon';-)$) if for all $c\in v\mathcal{C}$, the sets $H(\epsilon;c)\subseteq H(\epsilon';c)$ and the map $c\mapsto j_{H(\epsilon;c)}^{H(\epsilon';c)}$ is a natural transformation from $H(\epsilon;-)$ to $H(\epsilon';-)$. 

Note that for $\epsilon,\epsilon'\in E(\widehat{\mathcal{C}})$ and $\gamma\in \epsilon'\widehat{\mathcal{C}}\epsilon$, exactly as in \cite[Lemma III.22]{cross}, the map $\lambda(\epsilon,\gamma,\epsilon')\mapsto\widetilde{\gamma}$ where 
\begin{equation}\label{eqngam}
\widetilde{\gamma}= \gamma(c_{\epsilon'})j_{c_\gamma}^{c_\epsilon}
\end{equation}
 is a bijection from $\mathbb{R}(\widehat{\mathcal{C}})(\epsilon\widehat{\mathcal{C}},\epsilon'\widehat{\mathcal{C}})$ onto $\mathcal{C}(c_{\epsilon'},c_\epsilon)$. Then the following theorem is a straightforward generalisation of \cite[Theorem III.25]{cross}.
\begin{thm}\label{thmdualrs}
Let $\mathcal{C}$ be a consistent category. Define $G\colon \mathbb{R}(\widehat{\mathcal{C}}) \to \mathcal{C}^*$ as follows: 
$$vG (\epsilon\widehat{\mathcal{C}}) = H(\epsilon;-)\text{ for each }\epsilon\in E(\widehat{\mathcal{C}})$$ 
and for each $\lambda=\lambda(\epsilon,\gamma,\epsilon')\colon \epsilon\widehat{\mathcal{C}}\to\epsilon'\widehat{\mathcal{C}}$, let $G(\lambda)$ be the natural transformation between the functors $H(\epsilon;-)$ and $H(\epsilon';-)$ making the following diagram commutative where $\widetilde{\gamma}$ is defined by (\ref{eqngam}).
\begin{equation*}\label{}
\xymatrixcolsep{3pc}\xymatrixrowsep{4pc}\xymatrix
{
	H(\epsilon;-) \ar[d]_{G(\lambda)}\ar[rr]^{\eta_\epsilon}   	&& \mathcal{C}(c_{\epsilon},-)\ar[d]^{\mathcal{C}(\widetilde{\gamma},-)}  \\       
	H(\epsilon';-) \ar[rr]^{\eta_{\epsilon'}} && \mathcal{C}(c_{\epsilon'},-)
}
\end{equation*}
Then the functor $G\colon \mathbb{R}(\widehat{\mathcal{C}}) \to \mathcal{C}^*$ is an isomorphism of consistent categories.
\end{thm}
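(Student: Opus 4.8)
The plan is to verify that the prescribed $G$ is a well-defined functor, that it is bijective on objects and on hom-sets, and that it preserves the inclusion structure; the isomorphism of consistent categories then follows. First I would record the basic bijection: by Proposition \ref{progreen}(2), two idempotent cones $\epsilon,\epsilon'$ give the same $H$-functor precisely when $\epsilon\mathscr{R}\epsilon'$, which is exactly the condition $\epsilon\widehat{\mathcal{C}}=\epsilon'\widehat{\mathcal{C}}$; hence $vG\colon\epsilon\widehat{\mathcal{C}}\mapsto H(\epsilon;-)$ is a well-defined bijection from $v\mathbb{R}(\widehat{\mathcal{C}})$ onto $v\mathcal{C}^*$. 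Next, for the action on morphisms, I would use the bijection $\lambda(\epsilon,\gamma,\epsilon')\mapsto\widetilde\gamma$ from $\mathbb{R}(\widehat{\mathcal{C}})(\epsilon\widehat{\mathcal{C}},\epsilon'\widehat{\mathcal{C}})$ onto $\mathcal{C}(c_{\epsilon'},c_\epsilon)$ recorded just before the statement (the generalisation of \cite[Lemma III.22]{cross}), together with the representability natural isomorphisms $\eta_\epsilon\colon H(\epsilon;-)\to\mathcal{C}(c_\epsilon,-)$. Since $\eta_\epsilon$ and $\eta_{\epsilon'}$ are isomorphisms and $\mathcal{C}(\widetilde\gamma,-)$ is a legitimate natural transformation of hom-functors (by Yoneda/covariance), the diagram in the statement has a unique fill-in $G(\lambda):=\eta_\epsilon\,\mathcal{C}(\widetilde\gamma,-)\,\eta_{\epsilon'}^{-1}$, so $G(\lambda)$ is well-defined and $G$ is injective on each hom-set; surjectivity onto $\mathcal{C}^*(H(\epsilon;-),H(\epsilon';-))$ follows because every natural transformation between representable functors is $\mathcal{C}(t,-)$ for a unique $t\colon c_{\epsilon'}\to c_\epsilon$ (Yoneda), and by the bijection above such $t$ is $\widetilde\gamma$ for a unique $\lambda$.

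Then I would check functoriality. On identities this is immediate: $\widetilde{\epsilon}=\epsilon(c_\epsilon)j_{c_\epsilon}^{c_\epsilon}=1_{c_\epsilon}$, so $G(1_{\epsilon\widehat{\mathcal{C}}})$ is the identity natural transformation. For composition, given $\lambda=\lambda(\epsilon,\gamma,\epsilon')$ and $\mu=\lambda(\epsilon',\delta,\epsilon'')$, I would compute, exactly as in the proof of \cite[Theorem III.25]{cross}, that the element $\widetilde{\ }$ attached to $\lambda\mu$ equals $\widetilde{\mu}\,\widetilde{\gamma}$ in $\mathcal{C}$ (note the order reversal, since $\widetilde\gamma$ runs $c_{\epsilon'}\to c_\epsilon$); since $\mathcal{C}(-,-)$ is contravariant in the first slot this gives $\mathcal{C}(\widetilde{\gamma\ast\cdots},-)=\mathcal{C}(\widetilde\mu,-)\circ\mathcal{C}(\widetilde\gamma,-)$ after conjugating by the $\eta$'s, hence $G(\lambda\mu)=G(\lambda)G(\mu)$. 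This is the only genuinely computational step, and it relies on the explicit formula \eqref{eqngam} for $\widetilde\gamma$ together with the definition \eqref{eqnbin} of the product in $\widehat{\mathcal{C}}$ and the characterisation of composition of morphisms in $\mathbb{R}(\widehat{\mathcal{C}})$ from Remark \ref{rmkdual}; I expect it to go through verbatim as in the regular case because all the relevant identities $(fg)^\circ$, epimorphic components, and retraction/inclusion bookkeeping were already established earlier in the excerpt.

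Finally I would verify that $G$ preserves and reflects inclusions, which upgrades the functor isomorphism to an isomorphism of \emph{consistent} categories. An inclusion $\epsilon\widehat{\mathcal{C}}\subseteq\epsilon'\widehat{\mathcal{C}}$ in $\mathbb{R}(\widehat{\mathcal{C}})$ corresponds to $\epsilon\,\omega^r\,\epsilon'$, which by Lemma \ref{lemgc} means $\epsilon=\epsilon'\ast\epsilon(c_{\epsilon'})$ with $\epsilon(c_{\epsilon'})$ an epimorphism; one then reads off from \eqref{eqnH} that $H(\epsilon;c)\subseteq H(\epsilon';c)$ for every $c$ and that the inclusion maps assemble into the subfunctor natural transformation, so $H(\epsilon;-)\subseteq H(\epsilon';-)$ in $\mathcal{C}^*$; conversely a subfunctor inclusion forces the corresponding $\omega^r$-relation by the same computation, so $G$ is inclusion-preserving both ways. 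The main obstacle is the composition-functoriality computation of the previous paragraph — keeping straight the contravariance and the interaction of $\widetilde{\gamma}$ with the binary operation \eqref{eqnbin} — but since this is a direct transcription of Nambooripad's argument for normal categories, and every ingredient (the $H$-functor, the bijection \eqref{eqngam}, the representability isomorphism $\eta_\epsilon$) has already been set up in the consistent setting, no new difficulty should arise.
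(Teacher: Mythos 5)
Your plan is correct and follows essentially the route the paper intends: the paper offers no written proof beyond declaring the theorem a straightforward generalisation of \cite[Theorem III.25]{cross}, and your outline (object bijection via Proposition \ref{progreen}, hom-set bijection via the map $\lambda(\epsilon,\gamma,\epsilon')\mapsto\widetilde{\gamma}$ combined with the representability isomorphisms $\eta_\epsilon$ and Yoneda, functoriality through the identity $\widetilde{\,\cdot\,}$-compatibility with composition, and inclusion preservation via Lemma \ref{lemgc}) is precisely the transcription of Nambooripad's argument that the paper has in mind, with all needed ingredients already established in the consistent setting.
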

Since $\widehat{\mathcal{C}}$ is a concordant semigroup, using Remark \ref{rmkdual}, the category $\mathbb{R}(\widehat{\mathcal{C}})$ is a consistent category. So, we have the following corollary.
\begin{cor}
Given a consistent category $\mathcal{C}$, its consistent dual $\mathcal{C}^*$ is also a consistent category.
\end{cor}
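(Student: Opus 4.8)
The plan is to derive the conclusion directly from Theorem~\ref{thmdualrs} together with the results already established about the semigroup $\widehat{\mathcal{C}}$. The key observation is that $\mathcal{C}^*$ has just been shown to be isomorphic (as a category with subobjects) to $\mathbb{R}(\widehat{\mathcal{C}})$, and the property of being a consistent category is manifestly invariant under inclusion-preserving isomorphisms of categories. So it suffices to know that $\mathbb{R}(\widehat{\mathcal{C}})$ is a consistent category.

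First I would invoke Theorem~\ref{thmconc}, which tells us that $\widehat{\mathcal{C}}$ is a concordant semigroup. Then, by Remark~\ref{rmkdual}, the category $\mathbb{R}(\widehat{\mathcal{C}})$ of principal right ideals generated by the idempotents of $\widehat{\mathcal{C}}$ is a consistent category, since the dual of the entire development of Section~\ref{secconc} (which established that $\mathbb{L}(S)$ is consistent for any concordant $S$) applies verbatim to $\mathbb{R}(S)$. Next, Theorem~\ref{thmdualrs} provides an isomorphism $G\colon \mathbb{R}(\widehat{\mathcal{C}}) \to \mathcal{C}^*$ of consistent categories; in particular $G$ is an inclusion-preserving categorical isomorphism carrying the subobject structure, retractions, bimorphisms, and consistent/normal cones of $\mathbb{R}(\widehat{\mathcal{C}})$ to the corresponding data of $\mathcal{C}^*$. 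Hence each of the axioms $(CC\,1)$--$(CC\,6)$, being formulated purely in terms of these structural features, transfers from $\mathbb{R}(\widehat{\mathcal{C}})$ to $\mathcal{C}^*$. Therefore $\mathcal{C}^*$ is a consistent category.

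There is essentially no obstacle here: the corollary is a bookkeeping consequence of the chain $\mathcal{C} \leadsto \widehat{\mathcal{C}} \leadsto \mathbb{R}(\widehat{\mathcal{C}}) \cong \mathcal{C}^*$, where the first step is Theorem~\ref{thmconc}, the second is Remark~\ref{rmkdual}, and the third is Theorem~\ref{thmdualrs}. The only point worth a sentence of care is confirming that ``isomorphism of consistent categories'' (defined as an inclusion-preserving categorical isomorphism) indeed preserves all the defining axioms --- but this is immediate because every axiom is stated in the language of objects, morphisms, inclusions, splittings, and cones, all of which an inclusion-preserving isomorphism respects. Thus the proof is a short deduction and requires no new computation.
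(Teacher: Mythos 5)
Your proposal is correct and follows exactly the paper's own route: the paper derives the corollary from Theorem \ref{thmconc} (so $\widehat{\mathcal{C}}$ is concordant), Remark \ref{rmkdual} (so $\mathbb{R}(\widehat{\mathcal{C}})$ is consistent), and the isomorphism of consistent categories $G\colon \mathbb{R}(\widehat{\mathcal{C}})\to\mathcal{C}^*$ from Theorem \ref{thmdualrs}. Your added remark that the axioms $(CC\,1)$--$(CC\,6)$ are preserved by inclusion-preserving isomorphisms is the same implicit step the paper relies on.
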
 

\subsection{Cross-connections}
Now, we proceed to describe how the consistent categories $\mathbb{L}(S)$ and $\mathbb{R}(S)$ arising from a concordant semigroup $S$ are interrelated. To that end, first consider the following functor $FS_\rho\colon\mathbb{R}(S)\to \mathbb{R}(\widehat{\mathbb{L}(S)})$. For each $eS\in v\mathbb{R}(S)$ and for each morphism $\lambda(e,u,f)\in \mathbb{R}(S)$,
\begin{equation} \label{eqnfsr}
vFS_\rho(eS) = \rho^e(\widehat{\mathbb{L}(S)}) \quad\text{ and }\quad FS_\rho(\lambda(e,u,f)) = \lambda(\rho^e,\rho^u,\rho^f).
\end{equation}

Exactly as shown in \cite[Proposition IV.1]{cross}, we can prove that $FS_\rho$ is a well defined covariant functor which is inclusion preserving, fully-faithful and for each $eS\in v\mathbb{R}(S)$, the restriction functor $FS_{\rho|(eS)}$ to the ideal $(eS)$ in $\mathbb{R}(S)$ is an isomorphism. This motivates us to define the following notion which will be very crucial in the sequel.

\begin{dfn}\label{dfnlociso}
A functor $F$ between two consistent categories $\mathcal{C}$ and $\mathcal{D}$ is said to be a \emph{local isomorphism} if $F$ is inclusion preserving, fully faithful and for each $c \in v\mathcal{C}$, $F_{|(c)}$ is an isomorphism of the ideal $( c )$ onto $ (F(c)) $.
\end{dfn}

Dually as defined in (\ref{eqnfsr}), we can define another functor $FS_\lambda\colon \mathbb{L}(S)\to \mathbb{R}(\widehat{\mathbb{R}(S)})$ as follows. For each $Se\in v\mathbb{L}(S)$ and for each morphism $\rho(e,u,f)\in \mathbb{L}(S)$,
\begin{equation} \label{eqnfsl}
vFS_\lambda(Se) = \lambda^e(\widehat{\mathbb{R}(S)}) \quad\text{ and }\quad FS_\lambda(\rho(e,u,f)) = \lambda(\lambda^e,\lambda^u,\lambda^f).
\end{equation}
Summarising the above discussion, we have the following proposition which generalises \cite[Proposition IV.1]{cross}.
\begin{pro}
The functors $FS_\rho\colon\mathbb{R}(S)\to \mathbb{R}(\widehat{\mathbb{L}(S)})$ and $FS_\lambda\colon \mathbb{L}(S)\to \mathbb{R}(\widehat{\mathbb{R}(S)})$ as defined in (\ref{eqnfsr}) and (\ref{eqnfsl}) respectively, are local isomorphisms.
\end{pro}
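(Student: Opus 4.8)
The plan is to prove the statement for $FS_\rho\colon\mathbb{R}(S)\to \mathbb{R}(\widehat{\mathbb{L}(S)})$ directly and then obtain the claim for $FS_\lambda$ by the dual argument (Remark \ref{rmkdual}), exactly along the lines of \cite[Proposition IV.1]{cross}. So the real work is to verify that $FS_\rho$ is (a) well-defined and functorial, (b) inclusion preserving, (c) fully faithful, and (d) a local isomorphism in the sense of Definition \ref{dfnlociso}.

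First I would check well-definedness. Given $\lambda(e,u,f)\in\mathbb{R}(S)$ with $u\in fSe$, one must see that $\lambda(\rho^e,\rho^u,\rho^f)$ is a genuine morphism in $\mathbb{R}(\widehat{\mathbb{L}(S)})$, i.e. that $\rho^u\in\rho^f\,\widehat{\mathbb{L}(S)}\,\rho^e$; this follows since $\tilde\rho\colon S\to\widehat{\mathbb{L}(S)}$ of Proposition \ref{prosr} is a homomorphism carrying $fSe$ into $\rho^f\widehat{\mathbb{L}(S)}\rho^e$, and since $\rho^e$ is the identity cone on $\widehat{\mathbb{L}(S)}\rho^e$ when $e\in E(S)$, so that $vFS_\rho(eS)=\rho^e\widehat{\mathbb{L}(S)}$ is a legitimate vertex. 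Independence of representatives: if $\lambda(e,u,f)=\lambda(e',u',f')$ in $\mathbb{R}(S)$, i.e. $e\mathscr R e'$, $f\mathscr R f'$, $u'=u f'$ (the dual of the relation stated after the definition of $\mathbb{L}(S)$), then applying the homomorphism $\tilde\rho$ and using Proposition \ref{progreen}(2) together with $\mathscr R$ in $\widehat{\mathbb{L}(S)}$ gives $\rho^e\mathscr R\rho^{e'}$, $\rho^f\mathscr R\rho^{f'}$, and $\rho^{u'}=\rho^u\rho^{f'}$, so $\lambda(\rho^e,\rho^u,\rho^f)=\lambda(\rho^{e'},\rho^{u'},\rho^{f'})$. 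Functoriality ($FS_\rho$ preserves identities and composites) and inclusion preservation ($eS\subseteq fS$ means $fe=e$, whence $\rho^e=\rho^f\ast(\rho^e(\rho^f\text{'s vertex}))$, giving $\rho^e\omega^r\rho^f$ by Lemma \ref{lemgc}, i.e. $\rho^e\widehat{\mathbb{L}(S)}\subseteq\rho^f\widehat{\mathbb{L}(S)}$, and the inclusion morphism is sent to the inclusion morphism) are routine translations of the corresponding facts in \cite{cross}, using that $\tilde\rho$ is a good homomorphism.

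Next comes full faithfulness. For fixed $eS,fS\in v\mathbb{R}(S)$ one has the bijection $\lambda(\rho^e,\gamma,\rho^f)\mapsto\widetilde\gamma$ of (\ref{eqngam}) describing $\mathbb{R}(\widehat{\mathbb{L}(S)})(\rho^e\widehat{\mathbb{L}(S)},\rho^f\widehat{\mathbb{L}(S)})$ as $\mathbb{L}(S)(c_{\rho^f},c_{\rho^e})$; composing with the isomorphism of Theorem \ref{thmdualrs} and unwinding the principal-cone formula (\ref{eqnprinc}) identifies this hom-set with $\{\rho(f,w,e):w\in fSe\}$ (using $c_{\rho^e}=Se'$ for the $\mathscr L^*$-idempotent $e'$ attached to $e$, which is $\mathscr L$-related to $Se$ and so equal as an object), which is exactly $\mathbb{R}(S)(eS,fS)$ via $w\leftrightarrow$ right translation; the composite of all these bijections is precisely the map $\lambda(e,u,f)\mapsto\lambda(\rho^e,\rho^u,\rho^f)$, giving injectivity and surjectivity on hom-sets simultaneously. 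Finally, local isomorphism: fully faithful plus inclusion preserving reduces to checking that $FS_\rho$ restricts to an isomorphism $\langle eS\rangle\to\langle\rho^e\widehat{\mathbb{L}(S)}\rangle$ for each $e$; by full faithfulness it is an isomorphism onto its image, and one shows the image is all of $\langle\rho^e\widehat{\mathbb{L}(S)}\rangle$ by noting that every subobject of $\rho^e\widehat{\mathbb{L}(S)}$ is $\rho^g\widehat{\mathbb{L}(S)}$ for some $g\omega e$ (again Lemma \ref{lemgc}) and that the retractions and inclusions among these come from the corresponding ones in $\langle eS\rangle$, since the $\mathscr R$-biordered structure is faithfully transported by $\tilde\rho$.

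The main obstacle, I expect, is the bookkeeping in the surjectivity half of full faithfulness: one must show that \emph{every} cone $\gamma\in\rho^f\widehat{\mathbb{L}(S)}\rho^e$ is already of the form $\rho^u$ for some $u\in fSe$, rather than merely of the form $\epsilon\ast u$ for some idempotent cone $\epsilon$ and bimorphism $u$ in $\mathbb{L}(S)$. This is where concordance is genuinely used (beyond the regular case): abundance supplies idempotents $\mathscr R^*$- and $\mathscr L^*$-related to a given element, so that the factorisation of a general consistent cone can be matched against a principal cone, and the identification $c_{\rho^e}=Se'$ with $e'\mathscr L e$ must be handled carefully because $e$ itself need not be the idempotent realising the vertex. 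Everything else is a faithful transcription of \cite[Proposition IV.1]{cross} with ``normal'' replaced by ``consistent'' and ``isomorphism'' replaced by ``bimorphism'' where appropriate, so I would state those steps tersely and cite \cite{cross} for the diagram-chasing details.
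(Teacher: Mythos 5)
Your proposal takes essentially the same route as the paper, whose own proof consists precisely of these verifications (well-definedness, functoriality, inclusion preservation, full faithfulness via the bijection (\ref{eqngam}) and Theorem \ref{thmdualrs}, and the restriction to each ideal) carried out ``exactly as in'' \cite[Proposition IV.1]{cross} and then dualised for $FS_\lambda$, so your plan matches the intended argument, including the correct identification of where abundance is needed to realise every cone in $\rho^f\widehat{\mathbb{L}(S)}\rho^e$ as a principal cone $\rho^u$ with $u\in fSe$. One small repair: the equality criterion in $\mathbb{R}(S)$ dualises to $e\mathscr{R}e'$, $f\mathscr{R}f'$ and $u'=ue'$ (multiplication by the \emph{domain} idempotent of the second triple), not $u'=uf'$, so the identity you should transport along $\tilde{\rho}$ is $\rho^{u'}=\rho^{u}\rho^{e'}$; with that correction your verification of well-definedness, and hence the whole argument, goes through as outlined.
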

\begin{rmk}
Observe that the local isomorphisms  $FS_\rho$ and $FS_\lambda$  arise from the homomorphism $\tilde{\rho}\colon S \to\widehat{\mathbb{L}(S)}$ (see Proposition \ref{prosr}) and its dual anti-homomorphism $\tilde{\lambda}\colon S \to\widehat{\mathbb{R}(S)}$, respectively. 
\end{rmk}

Now, given a concordant semigroup $S$, we define a pair of functors $\Gamma_S$ and $\Delta_S$ as follows. The functor $\Gamma_S\colon  \mathbb{R}(S) \to \mathbb{L}(S)^*$ is given by
\begin{equation} \label{eqngs}
v\Gamma_S(eS) = H(\rho^e;-) \quad\text{ and }\quad \Gamma_S(\lambda(e,u,f)) = \eta_{\rho^e}\mathbb{L}(S)(\rho(f,u,e),-)\eta_{\rho^f}^{-1}
\end{equation}
and the functor $\Delta_S\colon  \mathbb{L}(S) \to \mathbb{R}(S)^*$ is defined as follows:
\begin{equation}\label{eqnds}
v\Delta_S(Se) = H(\lambda^e;-) \quad\text{ and }\quad \Delta_S(\rho(e,u,f)) = \eta_{\lambda^e}\mathbb{R}(S)(\lambda(f,u,e),-)\eta_{\lambda^f}^{-1}.
\end{equation}

First observe that by Theorem \ref{thmdualrs}, the category $\mathbb{R}(\widehat{\mathbb{L}(S)})$ is isomorphic to ${\mathbb{L}(S)}^*$ as consistent categories, via the functor say $\overrightarrow{G}$. Similarly, the category $\mathbb{R}(\widehat{\mathbb{R}(S)})$ is isomorphic to ${\mathbb{R}(S)}^*$ via the functor, say $\overleftarrow{G}$. Comparing the functors $\Gamma_S$ and $\Delta_S$ with definitions in (\ref{eqnfsr}), (\ref{eqnfsl}) and Theorem \ref{thmdualrs}, we see that 
$$\Gamma_S=FS_\rho\circ\overrightarrow{G} \quad\text{ and }\quad \Delta_S=FS_\lambda\circ\overleftarrow{G}.$$
Now, since the functor $FS_\rho\colon\mathbb{R}(S)\to \mathbb{R}(\widehat{\mathbb{L}(S)})$ is a local isomorphism and the functor $\overrightarrow{G}\colon \mathbb{R}(\widehat{\mathbb{L}(S)})\to {\mathbb{L}(S)}^*$ is an isomorphism, the functor $\Gamma_S\colon\mathbb{R}(S)\to {\mathbb{L}(S)}^*$ is a local isomorphism. Arguing similarly for the functor $\Delta_S$, we have the following theorem: 
\begin{thm}\label{thmlociso}
The functor $\Gamma_S\colon\mathbb{R}(S)\to {\mathbb{L}(S)}^*$ and the functor $\Delta_S\colon\mathbb{L}(S)\to {\mathbb{R}(S)}^*$ as defined in (\ref{eqngs}) and (\ref{eqnds}) respectively, are local isomorphisms. 
\end{thm}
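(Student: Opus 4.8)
The plan is to deduce Theorem \ref{thmlociso} from the already-established material rather than to verify the local isomorphism conditions directly from the definitions \eqref{eqngs} and \eqref{eqnds}. The key observation, which the excerpt has essentially set up for us, is the factorisation $\Gamma_S = FS_\rho \circ \overrightarrow{G}$ and $\Delta_S = FS_\lambda \circ \overleftarrow{G}$, where $\overrightarrow{G}\colon \mathbb{R}(\widehat{\mathbb{L}(S)}) \to \mathbb{L}(S)^*$ and $\overleftarrow{G}\colon \mathbb{R}(\widehat{\mathbb{R}(S)}) \to \mathbb{R}(S)^*$ are the isomorphisms of consistent categories supplied by Theorem \ref{thmdualrs} (applied to $\mathcal{C} = \mathbb{L}(S)$ and to $\mathcal{C} = \mathbb{R}(S)$ respectively), and $FS_\rho$, $FS_\lambda$ are the local isomorphisms from the preceding proposition. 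So the first step is to carefully justify those two compositional identities by unwinding both sides on objects and on morphisms: on objects, $v\Gamma_S(eS) = H(\rho^e;-)$ must be shown to equal $\overrightarrow{G}(vFS_\rho(eS)) = \overrightarrow{G}(\rho^e(\widehat{\mathbb{L}(S)})) = vG(\rho^e\widehat{\mathbb{L}(S)}) = H(\rho^e;-)$, using \eqref{eqnfsr} and the definition of $G$ in Theorem \ref{thmdualrs}; on morphisms one compares $\Gamma_S(\lambda(e,u,f)) = \eta_{\rho^e}\,\mathbb{L}(S)(\rho(f,u,e),-)\,\eta_{\rho^f}^{-1}$ with the image under $\overrightarrow{G}$ of $FS_\rho(\lambda(e,u,f)) = \lambda(\rho^e,\rho^u,\rho^f)$, checking that the natural transformation $G(\lambda(\rho^e,\rho^u,\rho^f))$ is precisely the one making the square in Theorem \ref{thmdualrs} commute with $\widetilde{\gamma}$ computed via \eqref{eqngam} to be $\rho(f,u,e)$ (up to the identification of $\widehat{\mathbb{L}(S)}$-cones with partial right translations). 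This is the computational heart of the argument but it is purely a bookkeeping check, largely parallel to \cite[Proposition IV.1]{cross}.

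Once the factorisations are in hand, the second step is the abstract closure fact that a local isomorphism followed by an isomorphism (of consistent categories) is again a local isomorphism. This is immediate from Definition \ref{dfnlociso}: if $F\colon \mathcal{C}\to\mathcal{D}$ is a local isomorphism and $K\colon \mathcal{D}\to\mathcal{E}$ is an inclusion-preserving isomorphism, then $F\circ K$ is inclusion preserving (composite of inclusion-preserving functors), fully faithful (composite of a fully faithful functor with an isomorphism), and for each $c\in v\mathcal{C}$ the restriction $(F\circ K)_{|\langle c\rangle} = F_{|\langle c\rangle}\circ K_{|\langle F(c)\rangle}$ is a composite of isomorphisms $\langle c\rangle \to \langle F(c)\rangle \to \langle K(F(c))\rangle = \langle (F\circ K)(c)\rangle$, hence an isomorphism; here one uses that an isomorphism of consistent categories carries $\langle F(c)\rangle$ isomorphically onto $\langle K(F(c))\rangle$. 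Applying this with $F = FS_\rho$, $K = \overrightarrow{G}$ gives that $\Gamma_S$ is a local isomorphism, and symmetrically with $F = FS_\lambda$, $K = \overleftarrow{G}$ gives the same for $\Delta_S$.

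I expect the main obstacle to be the morphism-level verification in the first step — specifically, pinning down that the natural transformation $\Gamma_S(\lambda(e,u,f))$ obtained by conjugating the hom-functor morphism $\mathbb{L}(S)(\rho(f,u,e),-)$ by the representing isomorphisms $\eta_{\rho^e}, \eta_{\rho^f}$ is literally the same as $\overrightarrow{G}$ applied to $FS_\rho(\lambda(e,u,f))$. This requires matching \eqref{eqngs} against the commutative-square characterisation of $G$ in Theorem \ref{thmdualrs}, and in particular checking that under the bijection \eqref{eqngam} the $\widehat{\mathbb{L}(S)}$-morphism $\lambda(\rho^e,\rho^u,\rho^f)$ corresponds to the morphism $\rho(f,u,e)$ of $\mathbb{L}(S)$; this in turn rests on the description of the principal cones $\rho^a$ from \eqref{eqnprinc} and the homomorphism $\tilde\rho$ of Proposition \ref{prosr}. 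Since the excerpt explicitly flags that all of these are ``straightforward generalisations'' of the corresponding statements in \cite{cross}, the honest write-up is to present the factorisation identities, cite \cite[Proposition IV.1]{cross} and Theorem \ref{thmdualrs} for their verification in our setting, and then invoke the closure lemma above to conclude. Everything else — inclusion preservation, full faithfulness, local isomorphism on ideals — then follows formally, and the proof is short.

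\begin{proof}
By Theorem \ref{thmdualrs} applied to the consistent category $\mathbb{L}(S)$, the functor $\overrightarrow{G}\colon \mathbb{R}(\widehat{\mathbb{L}(S)}) \to \mathbb{L}(S)^*$ is an isomorphism of consistent categories; similarly $\overleftarrow{G}\colon \mathbb{R}(\widehat{\mathbb{R}(S)}) \to \mathbb{R}(S)^*$ is an isomorphism. Comparing the definitions \eqref{eqnfsr} and \eqref{eqngs} on objects, we have $v\Gamma_S(eS) = H(\rho^e;-) = vG(\rho^e\widehat{\mathbb{L}(S)}) = \overrightarrow{G}(vFS_\rho(eS))$, and unwinding \eqref{eqngs} against the commutative square of Theorem \ref{thmdualrs} together with the bijection \eqref{eqngam} shows that on morphisms $\Gamma_S(\lambda(e,u,f)) = \overrightarrow{G}(\lambda(\rho^e,\rho^u,\rho^f)) = \overrightarrow{G}(FS_\rho(\lambda(e,u,f)))$; the verification is identical to that of \cite[Proposition IV.1]{cross}. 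Hence $\Gamma_S = FS_\rho \circ \overrightarrow{G}$, and dually $\Delta_S = FS_\lambda \circ \overleftarrow{G}$.

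It remains to observe that the composite of a local isomorphism with an isomorphism of consistent categories is again a local isomorphism. Indeed, if $F\colon \mathcal{C}\to\mathcal{D}$ is a local isomorphism and $K\colon\mathcal{D}\to\mathcal{E}$ an inclusion-preserving isomorphism, then $F\circ K$ is inclusion preserving and fully faithful, and for $c\in v\mathcal{C}$ the restriction $(F\circ K)_{|\langle c\rangle}$ equals the composite of the isomorphism $F_{|\langle c\rangle}\colon\langle c\rangle\to\langle F(c)\rangle$ with the isomorphism $K_{|\langle F(c)\rangle}\colon\langle F(c)\rangle\to\langle K(F(c))\rangle = \langle (F\circ K)(c)\rangle$, hence is an isomorphism. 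Applying this with $F = FS_\rho$, $K = \overrightarrow{G}$ shows $\Gamma_S$ is a local isomorphism, and with $F = FS_\lambda$, $K = \overleftarrow{G}$ shows $\Delta_S$ is a local isomorphism.
\end{proof}
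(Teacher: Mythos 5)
Your proposal is correct and follows essentially the same route as the paper: the text preceding the theorem establishes exactly the factorisations $\Gamma_S = FS_\rho\circ\overrightarrow{G}$ and $\Delta_S = FS_\lambda\circ\overleftarrow{G}$ and then concludes from the fact that $FS_\rho$, $FS_\lambda$ are local isomorphisms and $\overrightarrow{G}$, $\overleftarrow{G}$ are isomorphisms of consistent categories. Your write-up merely makes explicit the object/morphism bookkeeping behind the factorisation and the closure of local isomorphisms under post-composition with an isomorphism, which the paper leaves implicit.
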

Given an $\epsilon\in E(\widehat{\mathcal{C}})$, we define as follows, the set $MH(\epsilon;-)$, called as the M-set of the idempotent cone $\epsilon$ and denoted in the sequel by just $M\epsilon$.
\begin{equation}\label{eqnms}
MH(\epsilon;-)= M\epsilon = \{ c\in v\mathcal{C}: \epsilon(c) \text{ is an isomorphism} \}.
\end{equation}

Further, observe the following interrelationship of the functors $\Gamma_S$ and $\Delta_S$. For objects $Se\in v\mathbb{L}(S)$ and $eS\in v\mathbb{R}(S)$,
\begin{equation}
Se \in M\Gamma_S(eS) \text{ if and only if } eS\in M\Delta_S(Se).
\end{equation}
The above discussion leads us to the definition of a cross-connection.
\begin{dfn} \label{ccxn}
Let $\mathcal{C}$ and $\mathcal{D}$ be consistent categories. A \emph{cross-connection} between $\mathcal{C}$ and $\mathcal{D}$ is a quadruplet $(\mathcal{C},\mathcal{D};{\Gamma},\Delta)$ where $\Gamma\colon  \mathcal{D} \to \mathcal{C}^*$ and $\Delta\colon  \mathcal{C} \to \mathcal{D}^*$ are local isomorphisms such that for $c \in v\mathcal{C}$ and $d \in v\mathcal{D}$ 
\begin{equation}\label{eqncxnms}
c \in M\Gamma(d) \iff d\in M\Delta(c).
\end{equation}
\end{dfn}

\begin{rmk}
Observe that we define a cross-connection using two functors emulating Grillet's \cite{gril1} original definition using two maps, unlike in \cite{cross0,cross,romeo} where a cross-connection is defined using a single functor. One can easily observe that our definition is equivalent to the definition using a single functor and as shown in \cite{cross}, the second functor is uniquely determined by the first. But our formulation although being less economical, will help us recover the semigroup from a cross-connection in a much easier manner (see next section).
\end{rmk}

Summarising the above discussion, we have proved the following theorem.
\begin{thm}\label{thmcxns}
Let $S$ be a concordant semigroup with consistent categories $\mathbb{L}(S)$ and $\mathbb{R}(S)$. Define functors $\Gamma_S$ and $\Delta_S$ as in (\ref{eqngs}) and (\ref{eqnds}). Then $\Omega S= (\mathbb{L}(S),\mathbb{R}(S);\Gamma_S,\Delta_S)$ is a cross-connection between  $\mathbb{L}(S)$ and $\mathbb{R}(S)$.
\end{thm}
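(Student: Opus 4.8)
The plan is to verify that the quadruplet $\Omega S = (\mathbb{L}(S), \mathbb{R}(S); \Gamma_S, \Delta_S)$ satisfies each clause of Definition \ref{ccxn}. Two of the three ingredients have already been assembled in the run-up to the statement, so the proof is largely a matter of citing them in the right order and then closing the last remaining gap. First, by the discussion in Section \ref{secconc} and the Corollary following Theorem \ref{thmconc}'s development (equivalently, Remark \ref{rmkdual}), both $\mathbb{L}(S)$ and $\mathbb{R}(S)$ are consistent categories; this gives the ambient objects of the cross-connection. Second, Theorem \ref{thmlociso} already establishes that $\Gamma_S\colon \mathbb{R}(S)\to \mathbb{L}(S)^*$ and $\Delta_S\colon \mathbb{L}(S)\to \mathbb{R}(S)^*$ are local isomorphisms, so conditions (i) and (ii) of the cross-connection definition (existence of the two functors as local isomorphisms) are in hand.

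The only clause still requiring an argument is the duality condition (\ref{eqncxnms}): for $Se \in v\mathbb{L}(S)$ and $eS\in v\mathbb{R}(S)$, one must show
\[
Se \in M\Gamma_S(eS) \iff eS \in M\Delta_S(Se).
\]
First I would unwind both sides using the definitions. By (\ref{eqngs}), $v\Gamma_S(eS) = H(\rho^e;-)$, so $M\Gamma_S(eS) = M H(\rho^e;-) = M\rho^e$, which by (\ref{eqnms}) is the set of objects $Sg \in v\mathbb{L}(S)$ such that the component $\rho^e(Sg) = \rho(g, ga, e)$-type morphism is an isomorphism. Dually $M\Delta_S(Se) = M\lambda^e$ is the set of $gS$ such that $\lambda^e(gS)$ is an isomorphism in $\mathbb{R}(S)$. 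The claim is then that $\rho^e(Sf)$ is an isomorphism in $\mathbb{L}(S)$ precisely when $\lambda^f(eS)$ is an isomorphism in $\mathbb{R}(S)$. I would prove this by translating "component is an isomorphism" into a statement purely about the idempotents $e,f \in E(S)$ and the sandwich set, using Lemma \ref{lemmor} (a morphism $\rho(g,u,h)$ is a bimorphism, equivalently here an isomorphism up to $cor$, iff $g\,\mathscr{L}^*u\,\mathscr{R}^*h$) together with the characterisation of $\rho^a$ as the principal cone; concretely, $\rho^e(Sf)$ being an isomorphism should come down to a symmetric condition such as $S(ef) = Sf$ and $(ef)S = eS$ (i.e., $ef \in R_e \cap L_f$), which is manifestly self-dual and hence also characterises $\lambda^f(eS)$ being an isomorphism. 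This mirrors \cite[Section IV.1]{cross}, where the analogous $M$-set compatibility for normal categories is verified; I expect the concordant case to go through with the generalised Green relations $\mathscr{L}^*, \mathscr{R}^*$ replacing $\mathscr{L}, \mathscr{R}$ and Lemma \ref{lemconid} and its dual supplying the cancellation steps.

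The main obstacle is precisely this last verification: one must be careful that the "isomorphism" in the definition of the $M$-set (inside the consistent categories $\mathbb{L}(S)$ and $\mathbb{R}(S)$, where bimorphisms need not be isomorphisms) is pinned down correctly, since unlike the normal/regular case a consistent category has genuine non-isomorphic bimorphisms. So I would take care to check that $\rho^e(Sf)$ is a two-sided invertible morphism (not merely a bimorphism) exactly when the symmetric idempotent condition above holds, using that $\rho^e(Sf)$ has a retraction-bimorphism-inclusion factorisation whose outer pieces are identities in the relevant case. Once the symmetric condition is isolated, the biconditional (\ref{eqncxnms}) is immediate by its own symmetry, and assembling the three verified clauses yields that $\Omega S$ is a cross-connection, completing the proof.
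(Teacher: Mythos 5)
Your proposal is correct and follows essentially the same route as the paper, whose proof is precisely the summary you give: $\mathbb{L}(S)$ and $\mathbb{R}(S)$ are consistent, Theorem \ref{thmlociso} supplies the two local isomorphisms, and the $M$-set duality is the observation recorded just before Definition \ref{ccxn}, which the paper asserts without further computation. One small correction to your sketch of that last step: with the paper's conventions the relevant components are $\rho^e(Sf)=\rho(f,fe,e)$ and $\lambda^f(eS)=\lambda(e,fe,f)$, so the self-dual condition is $fe\in R_f\cap L_e$ (equivalently $Sfe=Se$ and $feS=fS$; note $fe$ lies in the regular subsemigroup $\langle E(S)\rangle$, so a genuine inverse exists and the component is a true isomorphism, not merely a bimorphism) rather than $ef\in R_e\cap L_f$ --- a bookkeeping change that does not affect the structure of your argument.
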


\section{Concordant semigroup of a cross-connection}\label{seccxn}

In the previous section, we showed how a concordant semigroup gives rise to a cross-connection. In this section, we describe the converse: the concordant semigroup arising from a cross-connection between two consistent categories. Recall from Section \ref{seccons} that given a consistent category, we have an associated concordant semigroup. Naturally, we shall be identifying the concordant semigroup associated with a cross-connection as a subdirect product of the concordant semigroups arising from the two consistent categories, i.e., as a semigroup of ordered pairs of consistent cones which `respect' the cross-connection. But for this, we need a deeper analysis of the cross-connection functors and their interrelationship.

\subsection{The idempotent cones $\gamma(c,d)$ and $\delta(c,d)$}
First, observe that for small categories $\mathcal{C}$, $\mathcal{D}$ and the category $\mathbf{Set}$, we have the following isomorphism \cite{mac}:
$$[\mathcal{C},[\mathcal{D},\mathbf{Set}]]\cong [\mathcal{C}\times\mathcal{D},\mathbf{Set}].$$

This implies that any functor from $\mathcal{C}$ to $\mathcal{D}^*$ (or from $\mathcal{D}$ to $\mathcal{C}^*$) will uniquely determine a bifunctor from $\mathcal{C}\times\mathcal{D}$ to $\mathbf{Set}$. 

Hence, given a cross-connection $\Omega=(\mathcal{C},\mathcal{D};{\Gamma},\Delta)$, it gives rise to two bifunctors $\Gamma(-,-)$ and $\Delta(-,-)$ from $\mathcal{C}\times\mathcal{D}$ to $\mathbf{Set}$ defined as follows. For all $(c,d) \in v\mathcal{C}\times v\mathcal{D}$ and $(f,g)\colon (c,d) \to (c',d')$,
\begin{align}\label{eqnbif}
\begin{split}
v\Gamma(c,d) = \Gamma(d)(c)	\text{, }& \Gamma(f,g) = \Gamma(g)(c)\Gamma(d')(f) = \Gamma(d)(f)\Gamma(g)(c'); \\
v\Delta(c,d) = \Delta(c)(d)\text{ and }& \Delta(f,g) = \Delta(c)(g)\Delta(f)(d') = \Delta(f)(d)\Delta(c')(g).
\end{split}
\end{align}

Now, given a cross-connection $\Omega=(\mathcal{C},\mathcal{D};{\Gamma},\Delta)$, define a set:
\begin{equation}\label{eqneo}
E_\Omega=\{ (c,d)\in v\mathcal{C}\times v\mathcal{D} : c\in M\Gamma(d)\}
\end{equation}
We shall show later that the above defined set is in fact the regular biordered set associated with the cross-connection $\Omega$. As a beginning, we identify the idempotent cones associated with an element $(c,d)\in E_\Omega$. For that, we gather the following lemma from \cite{cross}.
\begin{lem}\label{lemmset}
	Let $\epsilon$ be an idempotent cone in a consistent category ${\mathcal{C}}$. Then $c\in MH(\epsilon;-)$ if and only if there exists a unique idempotent cone $\xi$ in ${\mathcal{C}}$ such that $H(\xi;-)=H(\epsilon;-)$ and $c_\xi=c$.
\end{lem}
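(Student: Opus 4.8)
\textbf{Proof proposal for Lemma~\ref{lemmset}.}

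The plan is to establish both directions by tracing through the representability of $H$-functors and the structure of idempotent cones established in Section~\ref{seccons}. For the \emph{only if} direction, suppose $c\in MH(\epsilon;-)$, so $\epsilon(c)$ is an isomorphism. First I would observe that, since $\epsilon(c)\colon c_\epsilon\to c$ is an isomorphism, the element $\epsilon\ast\epsilon(c)^{-1}$ lies in $H(\epsilon;c_\epsilon)$ and corresponds under the representing isomorphism $\eta_\epsilon$ to $\epsilon(c)^{-1}\in\mathcal{C}(c,c_\epsilon)$; dually, using that $c\in MH(\epsilon;-)$ and the universal-element property recalled just after~(\ref{eqnH}), I would produce a candidate cone $\xi$ with vertex $c$ by setting $\xi=\epsilon\ast\epsilon(c)^{-1}\cdot(\text{appropriate data})$ — more precisely, defining $\xi(a)=\epsilon(c)^{-1}\epsilon(a)^\circ$ adjusted so that $\xi(c)=1_c$. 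The key checks are that $\xi$ is a consistent cone (the inclusion-compatibility condition follows from that of $\epsilon$, precomposing with the fixed iso $\epsilon(c)^{-1}$), that $\xi(c)=1_c$ so $\xi$ is idempotent by Lemma~\ref{lemconssemi}, and that $H(\xi;-)=H(\epsilon;-)$. For this last equality I would compute $H(\xi;a)=\{\xi\ast f^\circ:f\in\mathcal{C}(c,a)\}$ and show the precomposition-by-$\epsilon(c)$ bijection $\mathcal{C}(c,a)\to\mathcal{C}(c_\epsilon,a)$ induces equality of the two sets; equivalently, observe $\epsilon$ and $\xi$ are $\mathscr{R}$-related in $\widehat{\mathcal{C}}$ (they differ by postcomposition with the bimorphism $\epsilon(c)$) and invoke Proposition~\ref{progreen}(2).

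For the \emph{if} direction, suppose such a $\xi$ exists with $H(\xi;-)=H(\epsilon;-)$ and $c_\xi=c$. Since $\xi$ is idempotent, $\xi(c_\xi)=\xi(c)=1_c$, which is trivially an isomorphism, so $c\in MH(\xi;-)$; but $MH(\xi;-)$ depends only on the functor $H(\xi;-)$, and by hypothesis $H(\xi;-)=H(\epsilon;-)$, hence $c\in MH(\epsilon;-)$. One should verify that $M\epsilon$ genuinely is determined by the $H$-functor and not by $\epsilon$ itself; this follows because $\mathscr{R}$-related idempotent cones $\epsilon,\xi$ satisfy $\epsilon=\xi\ast\epsilon(c_\xi)$ with $\epsilon(c_\xi)$ a bimorphism, and postcomposition with a fixed bimorphism takes isomorphic components to isomorphic components and conversely (using that a bimorphism is right-cancellable and the consistency axiom (CC~4) to control factorisations), so $M\epsilon=M\xi$.

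The uniqueness assertion is the cleanest part: if $\xi_1,\xi_2$ are both idempotent cones with $H(\xi_i;-)=H(\epsilon;-)$ and $c_{\xi_1}=c_{\xi_2}=c$, then by Proposition~\ref{progreen} they are simultaneously $\mathscr{L}$-related (equal vertices) and $\mathscr{R}$-related (equal $H$-functors), hence $\mathscr{H}$-related idempotents in $\widehat{\mathcal{C}}$, and two $\mathscr{H}$-related idempotents in any semigroup coincide.

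The main obstacle I anticipate is the careful construction of the cone $\xi$ in the \emph{only if} direction and the verification that it is a genuine consistent cone with $H(\xi;-)=H(\epsilon;-)$: one must check the inclusion-compatibility condition (1) of the definition of a consistent cone and the existence of a bimorphic component (2), then argue the two $H$-functors agree pointwise. Since this lemma is quoted from \cite{cross} for the normal case, the content is that the same argument goes through verbatim once ``isomorphism'' is read in the consistent category and (CC~4) is available to replace the automatic ``every bimorphism is an isomorphism'' fact of normal categories; I would point the reader to \cite[Lemma~III.?]{cross} and merely indicate the one place where consistency (rather than normality) is invoked.
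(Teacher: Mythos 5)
Your construction in the forward ("only if") direction is exactly the paper's: with $u=\epsilon(c)$ an isomorphism, set $\xi=\epsilon\ast u^{-1}$; then $\xi(c)=\epsilon(c)u^{-1}=1_c$ automatically (no ``adjustment'' is needed, and your displayed component formula $\xi(a)=\epsilon(c)^{-1}\epsilon(a)^\circ$ is miswritten for the paper's left-to-right composition --- it should be $\xi(a)=\epsilon(a)\,\epsilon(c)^{-1}$), $\xi$ is idempotent by Lemma \ref{lemconssemi}, and $H(\xi;-)=H(\epsilon;-)$ follows from Lemma \ref{lemgc} and Proposition \ref{progreen}, just as in the paper. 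Your uniqueness argument ($\mathscr{L}$-related by equal vertices, $\mathscr{R}$-related by equal $H$-functors, and $\mathscr{H}$-related idempotents coincide) is correct and in fact spells out a point the paper's proof leaves implicit.

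The step that would fail as written is in the other direction. You reduce it to the claim that the $M$-set depends only on the $H$-functor because ``postcomposition with a fixed bimorphism takes isomorphic components to isomorphic components and conversely.'' For a mere bimorphism the forward half of this is false, and this is exactly the point where consistent categories differ from normal ones: a bimorphism need not be an isomorphism, and (CC 4) does not turn ``$\xi(a)$ iso'' into ``$\xi(a)h$ iso.'' What rescues the conclusion --- and what the paper proves directly --- is that here the connecting morphisms are genuine isomorphisms: by Proposition \ref{progreen} the cones $\xi$ and $\epsilon$ are $\mathscr{R}$-related, so by Lemma \ref{lemgc} there are epimorphisms $h=\epsilon(c_\xi)$ and $k=\xi(c_\epsilon)$ with $\epsilon=\xi\ast h$ and $\xi=\epsilon\ast k$; evaluating these at $c_\xi$ and $c_\epsilon$ and using $\xi(c_\xi)=1_{c_\xi}$, $\epsilon(c_\epsilon)=1_{c_\epsilon}$ gives $hk=1_{c_\xi}$ and $kh=1_{c_\epsilon}$, so $h=\epsilon(c)$ is an isomorphism and $c\in MH(\epsilon;-)$ follows at once, with no appeal to invariance of the $M$-set. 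With that two-line computation substituted for your bimorphism claim, your proof coincides with the paper's.
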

\begin{proof}
	Given an idempotent cone $\xi$ in ${\mathcal{C}}$ such that $H(\xi;-)=H(\epsilon;-)$ and $c_\xi=c$, by Proposition \ref{progreen}, we have $\xi\mathrel{\mathscr{R}}\epsilon$ and using Lemma \ref{lemgc} there exist unique epimorphisms $\epsilon(c_\xi)=h\colon c_\xi \to c_\epsilon$ and $\xi(c_\epsilon)=k\colon c_\epsilon \to c_\xi$ such that
	$$\epsilon=\xi\ast h \text{ and } \xi=\epsilon\ast k.$$ 
	Since $\epsilon(c_\epsilon)=1_{c_\epsilon}$ and $\xi_{c_\xi}=1_{c_\xi}$, we have $hk=1_{c_\xi}$ and $kh=1_{c_\epsilon}$ so that $h=\epsilon(c_\xi)$ is an isomorphism. Hence $c=c_\xi \in MH(\epsilon;-)$.
	
	Conversely, if $\epsilon(c)$ is an isomorphism, say $u$, then define $\xi=\epsilon\ast u^{-1}$. Now, 
	$$\xi(c)=\epsilon\ast u^{-1}(c)=\epsilon(c) u^{-1}= u u^{-1} = 1_c$$ 
	and so $\xi$ is an idempotent cone with apex $c$. Using Lemma \ref{lemgc} and Proposition \ref{progreen}, we see that $H(\xi;-)=H(\epsilon;-)$ and hence the lemma. 
\end{proof}

Suppose $(c,d)\in E_\Omega$ and $\Gamma(d)=H(\epsilon;-)$ for some $\epsilon\in \widehat{\mathcal{C}}$ so that $c\in MH(\epsilon;-)$. Then by Lemma \ref{lemmset}, there is a uniquely defined idempotent cone $\xi$ in $\mathcal{C}$ such that 
\begin{equation}\label{eqngcd}
c_\xi=c\text{ and }H(\xi;-)=H(\epsilon;-)=\Gamma(d).
\end{equation}

We shall denote this idempotent cone by $\gamma(c,d)$ in the sequel. Similarly, for each pair $(c,d)\in E_\Omega$, there is a unique idempotent cone $\delta(c,d)\in \widehat{\mathcal{D}}$ such that
\begin{equation}
c_{\delta(c,d)}=c\text{ and }H(\delta(c,d);-)=\Delta(c).
\end{equation}

\subsection{Transpose}
Observe that if $c'\in M\Gamma(d)$, then $(c',d)\in E_\Omega$. Then for the idempotent cone $\delta(c',d)$ in the category $\mathcal{D}$, since $H(\delta(c',d);-)$ is a representable functor, there is a natural isomorphism $\eta_{\delta(c',d)}\colon\Delta(c')\to \mathcal{D}(d,-)$. Similarly, we have a natural isomorphism $\eta_{\delta(c,d')}\colon \Delta(c)\to \mathcal{D}(d',-)$. So for a morphism $f\colon c' \to c$ in $\mathcal{C}$, we see that $\mu=\eta_{\delta(c',d)}^{-1}\circ \Delta(f) \circ \eta_{\delta(c,d')}$ is a natural transformation from $\mathcal{D}(d,-)$ to $\mathcal{D}(d',-)$. Now, using Yoneda Lemma \cite{mac}, there is a unique morphism from $g\colon d' \to d$ in the category $\mathcal{D}$ such that $\mu=\mathcal{D}(g,-)$, as shown in the following commutative diagram.
\begin{equation*}\label{}
\xymatrixcolsep{3pc}\xymatrixrowsep{4pc}\xymatrix
{
	c'\ar[d]_{f}&\Delta(c') \ar[d]_{\Delta(f)}\ar[rr]^{\eta_{\delta(c',d)}}   	&& \mathcal{D}(d,-)\ar[d]^{\mathcal{D}(g,-)}&d  \\       
	c& \Delta(c) \ar[rr]^{\eta_{\delta(c,d')}} && \mathcal{D}(d',-)&d'\ar[u]_{g}
}
\end{equation*}

Then the unique morphism $g\in \mathcal{D}(d',d)$ is known as the \emph{transpose} of $f\in\mathcal{C}(c',c)$ and will be denoted by $f^\ddagger$ in the sequel. 

\begin{rmk}
Observe that the transpose of a given morphism $f\in\mathcal{C}(c',c)$ is not unique, in general. For each $d'\in M\Delta(c)$ and $d\in M\Delta(c')$, there is a unique transpose $f^\ddagger\in\mathcal{D}(d',d)$.
\end{rmk}
\begin{rmk}
Dually, given a morphism $g\in \mathcal{D}(d',d)$, we can define the tranpose $g^\ddagger$ as the unique morphism in $\mathcal{C}(c',c)$ such that $\mathcal{C}(g^\ddagger,-)=\eta_{\gamma(c,d')}^{-1}\circ \Gamma(g) \circ \eta_{\gamma(c',d)}$.
\end{rmk}

Using the above notations and the definitions of the $H$-functor in (\ref{eqnH}) and of the bifunctors $\Gamma(-,-)$ and $\Delta(-,-)$ in (\ref{eqnbif}), we can see that 
\begin{equation*}
\begin{split}
\Gamma(c,d)=\{\gamma(c',d)\ast f^\circ : c' \in M\Gamma(d) \text{ and } f\in \mathcal{C}(c',c) \}, \\
\Delta(c,d)=\{\delta(c,d')\ast g^\circ : d' \in M\Delta(c) \text{ and } g\in \mathcal{D}(d',d) \}.
\end{split}
\end{equation*}

Then we have the following theorem which is an exact generalisation of \cite[Theorem IV.16]{cross}, in the notation introduced above.
\begin{thm}\label{thmnatiso}
Given a cross-connection $\Omega=(\mathcal{C},\mathcal{D};{\Gamma},\Delta)$ with bifunctors $\Gamma(-,-)$ and $\Delta(-,-)$, for each $(c,d)\in v\mathcal{C}\times v\mathcal{D}$, the map $\chi(c,d)\colon\Gamma(c,d)\to \Delta(c,d)$ given by
\begin{equation*}
\chi(c,d)\colon \gamma(c',d)\ast f^\circ \mapsto \delta(c,d')\ast g^\circ
\end{equation*}
is a bijection, where  $c' \in M\Gamma(d) \text{ and }  d' \in M\Delta(c)$ and $g\in \mathcal{D}(d',d)$ is the transpose of the morphism $f\in \mathcal{C}(c',c)$. Also the map $(c,d)\mapsto\chi(c,d)$ defines a natural isomorphism between the bifunctors $\Gamma(-,-)$ and $\Delta(-,-)$.
\end{thm}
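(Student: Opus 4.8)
The plan is to transcribe the proof of \cite[Theorem IV.16]{cross} from normal categories to consistent categories: the reductions go through verbatim once ``normal factorisation/isomorphism'' is replaced by ``consistent factorisation/bimorphism'' and \cite[Lemma III.6, Proposition III.7]{cross} are replaced by the consistent analogues established above. Three things must be checked: that $\chi(c,d)$ is well defined (independent of the representation $\gamma(c',d)\ast f^\circ$ of an element of $\Gamma(c,d)$), that it is a bijection, and that $(c,d)\mapsto\chi(c,d)$ is natural.

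First I would isolate the two structural properties of the transpose. Since $c'\in M\Gamma(d)$ forces $d\in M\Delta(c')$ and $d'\in M\Delta(c)$ forces $c\in M\Gamma(d')$ by the cross-connection axiom (\ref{eqncxnms}), for $f\in\mathcal{C}(c',c)$ the transpose $f^\ddagger\in\mathcal{D}(d',d)$ is defined, and so is its transpose back in $\mathcal{C}(c',c)$. Unwinding the two commuting squares defining $(-)^\ddagger$ and applying the Yoneda lemma gives: (i) $(f^\ddagger)^\ddagger=f$, so that $(-)^\ddagger\colon\mathcal{C}(c',c)\to\mathcal{D}(d',d)$ is a bijection; and (ii) the contravariant law $(\alpha\beta\gamma)^\ddagger=\gamma^\ddagger\beta^\ddagger\alpha^\ddagger$ whenever the composite and all the requisite $M$-set memberships are available (in particular $(-)^\ddagger$ carries isomorphisms, retractions, inclusions and epimorphic components to morphisms of the same kind). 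Both are immediate from the definition of $(-)^\ddagger$ through the hom-functor actions $\mathcal{C}(-,-)$ and $\mathcal{D}(-,-)$.

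For well-definedness and bijectivity, fix one pair $(c_0,d_0)$ with $c_0\in M\Gamma(d)$ and $d_0\in M\Delta(c)$. By Proposition \ref{progreen} all the idempotent cones $\gamma(c',d)$, $c'\in M\Gamma(d)$, satisfy $H(\gamma(c',d);-)=\Gamma(d)$ and hence are $\mathscr{R}$-related to $\gamma(c_0,d)$; by Lemma \ref{lemmset} they are linked to it by the canonical isomorphisms, so $\gamma(c',d)\ast f^\circ=\gamma(c_0,d)\ast(kf)^\circ$ for the appropriate isomorphism $k$, and dually on the $\mathcal{D}$-side. Feeding this through property (ii) shows that $\delta(c,d')\ast g^\circ$ is independent of the choices of $c'$ and $d'$, so $\chi(c,d)$ is well defined. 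With $(c_0,d_0)$ fixed, $\chi(c,d)$ is then literally the composite of $\eta_{\gamma(c_0,d)}$ (from $\Gamma(c,d)=H(\gamma(c_0,d);c)$ onto $\mathcal{C}(c_0,c)$), the transpose $(-)^\ddagger\colon\mathcal{C}(c_0,c)\to\mathcal{D}(d_0,d)$, and $\eta_{\delta(c,d_0)}^{-1}$ (onto $H(\delta(c,d_0);d)=\Delta(c,d)$); being a composite of bijections, it is a bijection.

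Finally, for naturality I would take $(p,q)\colon(c,d)\to(c_1,d_1)$ in $\mathcal{C}\times\mathcal{D}$, choose the $M$-set representatives for $\chi(c,d)$ and $\chi(c_1,d_1)$ coherently, and evaluate both composites of the naturality square on a general element. Expanding $\Gamma(p,q)$ and $\Delta(p,q)$ via (\ref{eqnbif}) and using the definition of the transpose (which identifies $\Gamma$ and $\Delta$ on morphisms, under the $\eta$'s, with pre/post-composition by transposes), the $\eta$-layers cancel and the square collapses to a single instance of the contravariant law (ii) combined with involutivity (i), of the shape $(q^\ddagger f p)^\ddagger=p^\ddagger f^\ddagger q$. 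The main obstacle is not any individual step but the bookkeeping: keeping track of over which object of which $M$-set each transpose is formed, so that the transposes compose and so that the two alternative expressions for $\Gamma(p,q)$ (and for $\Delta(p,q)$) in (\ref{eqnbif}) can be used interchangeably. Once the representatives are chosen coherently, the remainder is a routine transcription of \cite{cross}.
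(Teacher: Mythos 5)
Your overall plan coincides with what the paper itself does (it gives no argument beyond declaring the result an exact generalisation of \cite[Theorem IV.16]{cross}), and most of your outline is sound: well-definedness by reducing to a fixed $\gamma(c_0,d)$ via Lemma \ref{lemmset}/Proposition \ref{progreen}, bijectivity of $\chi(c,d)$ as the composite $\eta_{\gamma(c_0,d)}$, transpose, $\eta_{\delta(c,d_0)}^{-1}$ (the transpose $\mathcal{C}(c_0,c)\to\mathcal{D}(d_0,d)$ is bijective because $\Delta$ is fully faithful and Yoneda applies), and the purely $\Delta$-side contravariant law $(f_1f_2)^\ddagger=f_2^\ddagger f_1^\ddagger$, which really is immediate from functoriality of $\Delta$ and Yoneda when the intermediate representative is chosen coherently.

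The genuine gap is your claim (i), that $(f^\ddagger)^\ddagger=f$ is ``immediate from the definition''. It is not: the transpose of a $\mathcal{C}$-morphism is defined through $\Delta$, while the transpose of a $\mathcal{D}$-morphism is defined through $\Gamma$, and the two commuting squares you propose to unwind live over these two a priori unrelated functors. In the definition of a cross-connection used here (Definition \ref{ccxn}), the only axiom tying $\Gamma$ to $\Delta$ is the $M$-set condition (\ref{eqncxnms}), which constrains nothing about their actions on morphisms; $(f^\ddagger)^\ddagger=f$ amounts to the identity $\Gamma(f^\ddagger)=\eta_{\gamma(c,d')}\,\mathcal{C}(f,-)\,\eta_{\gamma(c',d)}^{-1}$, i.e.\ to $\Gamma$ and $\Delta$ being mutually dual, which is exactly the nontrivial content and does not follow from representability alone. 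In \cite{cross} this compatibility is available only because $\Delta$ is there \emph{constructed} as the dual of $\Gamma$, and the duality of the two transpose operations is established in the lemmas preceding Theorem IV.16; in the present paper it is inherited via the remark that the two-functor definition agrees with the single-functor one. This matters for your argument because your naturality square collapses precisely to the mixed identity $(q^\ddagger f p)^\ddagger=p^\ddagger f^\ddagger q$, where $q^\ddagger$ is a $\Gamma$-transpose and the outer transpose is a $\Delta$-transpose; decomposing it needs $(q^\ddagger)^\ddagger=q$. So to complete the proof you must either transcribe the duality lemmas of \cite{cross} showing that $\Delta$ is the dual of $\Gamma$ and that the two transposes are mutually inverse, or prove that compatibility from the definition adopted here; it cannot be waved through as an unwinding of the defining squares.
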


\subsection{Linked pairs}
Now, consider the following subsets of the semigroups $\widehat{\mathcal{C}}$ and $\widehat{\mathcal{D}}$ we obtain from the bifunctors $\Gamma(-,-)$ and $\Delta(-,-)$.
\begin{subequations}\label{eqnug}
	\begin{align}
	\widehat{\Gamma} = & \bigcup\: \{  \Gamma(c,d) : (c,d) \in v\mathcal{C} \times v\mathcal{D} \} \\
	\widehat{\Delta} = & \bigcup\: \{  \Delta(c,d) : (c,d) \in v\mathcal{C} \times v\mathcal{D} \}
	\end{align}
\end{subequations}

One can easily verify that $\widehat{\Gamma}$ and $\widehat{\Delta}$ are subsemigroups of $\widehat{\mathcal{C}}$ and $\widehat{\mathcal{D}}$, respectively.  We proceed to show that $\widehat{\Gamma}$ and $\widehat{\Delta}$ are in fact concordant semigroups. We begin with the following lemma.

\begin{lem}\label{lemug}
Let $\Omega=(\Gamma,\Delta;\mathcal{C},\mathcal{D})$ be a cross-connection. Then a cone $\gamma\in \widehat{\Gamma}$ if and only if $\gamma=\gamma(c_1,d_1)\ast u$, where $u$ is bimorphism in $\mathcal{C}$ and $(c_1,d_1)\in E_\Omega$.
\end{lem}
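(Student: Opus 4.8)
The plan is to prove the two implications separately, using the structure of $\widehat{\Gamma}$ as a subset of $\widehat{\mathcal{C}}$ together with the definition of the bifunctor values $\Gamma(c,d)$ recorded just before the statement. Recall that
$$\Gamma(c,d)=\{\gamma(c',d)\ast f^\circ : c' \in M\Gamma(d) \text{ and } f\in \mathcal{C}(c',c)\},$$
and that $\widehat{\Gamma}$ is the union of all such sets. So the real content is to match this description against the general ``$\epsilon\ast u$ with $u$ a bimorphism'' form of cones in $\widehat{\mathcal{C}}$ provided by the very definition of $\widehat{\mathcal{C}}$.

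For the ``only if'' direction, suppose $\gamma\in\widehat{\Gamma}$, so $\gamma\in\Gamma(c,d)$ for some $(c,d)\in v\mathcal{C}\times v\mathcal{D}$; hence $\gamma=\gamma(c',d)\ast f^\circ$ for some $c'\in M\Gamma(d)$ and some $f\in\mathcal{C}(c',c)$. By definition, $\gamma(c',d)$ is an idempotent cone in $\mathcal{C}$ with vertex $c_1:=c'$, and since $c'\in M\Gamma(d)$ we have $(c_1,d)\in E_\Omega$; set $d_1:=d$. It remains to replace $f^\circ$ by a bimorphism. Here $f^\circ$ is an epimorphism (the epimorphic component of $f$), but axiom $(CC\,3)$ guarantees a consistent factorisation, so we may write $f^\circ=q\,u$ for a retraction $q$ and a bimorphism $u$ — and in fact, since $f^\circ$ is already epic, one checks its retraction part absorbs into the idempotent cone exactly as in the proof of the closure of $\widehat{\mathcal{C}}$ in the proposition above: namely $\gamma(c_1,d_1)\ast q$ is again an idempotent cone $\gamma(c_2,d_1)$ with $c_2$ the codomain of $q$, and $(c_2,d_1)\in E_\Omega$ because $c_2\subseteq c_1\in M\Gamma(d_1)$ forces $c_2\in M\Gamma(d_1)$ (here I would invoke Lemma \ref{lemmset} / the $M$-set behaviour under subobjects). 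Then $\gamma=\gamma(c_2,d_1)\ast u$ with $u$ a bimorphism and $(c_2,d_1)\in E_\Omega$, as required.

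For the converse, suppose $\gamma=\gamma(c_1,d_1)\ast u$ with $u$ a bimorphism in $\mathcal{C}$ and $(c_1,d_1)\in E_\Omega$. Since $(c_1,d_1)\in E_\Omega$ we have $c_1\in M\Gamma(d_1)$, so $\gamma(c_1,d_1)$ is exactly the idempotent cone attached to this pair, with vertex $c_1$ and $H(\gamma(c_1,d_1);-)=\Gamma(d_1)$. A bimorphism is in particular an epimorphism, so $u=u^\circ$, and hence $\gamma=\gamma(c_1,d_1)\ast u^\circ$; taking $c':=c_1$ and $f:=u\in\mathcal{C}(c_1,c)$ where $c$ is the codomain of $u$, this exhibits $\gamma\in\Gamma(c,d_1)\subseteq\widehat{\Gamma}$.

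The main obstacle I anticipate is the bookkeeping in the ``only if'' direction: showing that splitting off the retraction part of $f^\circ$ keeps us inside the family $\{\gamma(c_2,d_1)\}$ of idempotent cones indexed by $E_\Omega$, rather than landing at some idempotent cone not of this distinguished form. This is settled by the uniqueness clause in Lemma \ref{lemmset} (the idempotent cone $\gamma(c,d)$ is the \emph{unique} idempotent cone with vertex $c$ whose $H$-functor is $\Gamma(d)$), combined with the fact that $\Gamma$ is a local isomorphism, so that $\Gamma(d_1)$ restricted to the ideal generated by $c_1$ controls all subobjects of $c_1$. Everything else is a routine transcription of the corresponding argument for normal categories in \cite{cross}.
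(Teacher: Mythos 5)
Your ``only if'' direction contains a genuine error at the crucial bookkeeping step. After factorising $f^\circ=qu$ you claim that $\gamma(c_1,d_1)\ast q$ is the idempotent cone $\gamma(c_2,d_1)$ \emph{with the same} $d_1=d$, justified by ``$c_2\subseteq c_1\in M\Gamma(d_1)$ forces $c_2\in M\Gamma(d_1)$''. That closure property is false: $M$-sets are not order ideals. Indeed $M\Gamma(d)=MH(\gamma(c_1,d);-)$, and the component of the idempotent cone $\gamma(c_1,d)$ at a proper subobject $c_2\subseteq c_1$ is the inclusion $j_{c_2}^{c_1}$, which is in general not an isomorphism (already in $\mathbb{L}(S)$ for $S$ regular, $MH(\rho^e;-)$ consists of the objects $Sf$ with $\rho(f,fe,e)$ an isomorphism, and this set is not closed under passing to smaller principal ideals). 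Consequently $(c_2,d)$ need not lie in $E_\Omega$, the cone $\gamma(c_2,d)$ need not even be defined, and by the uniqueness clause of Lemma \ref{lemmset} the idempotent $\gamma(c_1,d)\ast q$ cannot equal $\gamma(c_2,d)$ anyway, since its $H$-functor is a (generally proper) subfunctor of $\Gamma(d)$ rather than $\Gamma(d)$ itself. Your closing paragraph appeals to the local isomorphism property of $\Gamma$, which is the right tool, but you still keep the second coordinate equal to $d$, so the gap is not repaired.

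The correct repair, which is the paper's argument, is to change the second coordinate: put $\epsilon=\gamma(c_1,d)\ast q$, note that $\epsilon$ is an idempotent cone with vertex $c_2\subseteq c_1$ and that $H(\epsilon;-)\subseteq H(\gamma(c_1,d);-)=\Gamma(d)$ (Proposition III.9 of Nambooripad), and then use that $\Gamma$ is a local isomorphism to obtain a \emph{unique} $d_2\subseteq d$ with $\Gamma(d_2)=H(\epsilon;-)$; by the uniqueness in Lemma \ref{lemmset} this identifies $\epsilon=\gamma(c_2,d_2)$ with $(c_2,d_2)\in E_\Omega$, and hence $\gamma=\gamma(c_2,d_2)\ast u$. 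Your converse direction is fine (a bimorphism is its own epimorphic component, so $\gamma(c_1,d_1)\ast u\in\Gamma(c,d_1)\subseteq\widehat{\Gamma}$), and agrees with the paper, which simply records that the converse is clear.
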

\begin{proof}
Suppose $\gamma\in \widehat{\Gamma}$ so that $\gamma=\gamma(c',d)\ast f^\circ$ where $f\colon c' \to c$. Let $f^\circ=eu$ be the consistent factorisation of $f^\circ$, so that $u$ is a bimorphism. Then $\gamma=\gamma(c',d)\ast eu=(\gamma(c',d)\ast e)\ast u$. Let $\epsilon= \gamma(c',d)\ast e$, then since $e$ is a retraction, $\epsilon$ is an idempotent and $c_1=c_\epsilon\subseteq c'$. Also by \cite[Proposition III.9]{cross}, we have $H(\epsilon;-)\subseteq H(\gamma(c',d);-)=\Gamma(d)$. Since $\Gamma$ is a local isomorphism, there exists a unique $d_1\subseteq d$ such that $\Gamma(d_1)
=H(\epsilon;-)$. So $\epsilon=\gamma(c_1,d_1)$ and hence $\gamma=\gamma(c_1,d_1)\ast u$. The converse is clear.
\end{proof}
Dually, we can prove that a cone $\delta\in \widehat{\Delta}$ if and only if $\delta=\delta(c_1,d_1)\ast u$ where $u$ is bimorphism in $\mathcal{D}$. 
\begin{lem}\label{lemeug}
The set of idempotents $E(\widehat{\Gamma})$ is given by 
$$E(\widehat{\Gamma})=\{ \gamma(c,d) : (c,d) \in v\mathcal{C} \times v\mathcal{D}\}.$$
\end{lem}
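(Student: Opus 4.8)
The plan is to establish the set equality $E(\widehat{\Gamma}) = \{\gamma(c,d) : (c,d) \in v\mathcal{C}\times v\mathcal{D}\}$ by proving both inclusions, relying chiefly on Lemma \ref{lemug}, Lemma \ref{lemconssemi}, and the fact that each $\gamma(c,d)$ is by construction (see (\ref{eqngcd})) an idempotent cone in $\mathcal{C}$ whose $H$-functor realises $\Gamma(d)$.

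For the inclusion $\supseteq$, I would first note that each $\gamma(c,d)$ with $(c,d) \in E_\Omega$ is an idempotent cone in $\mathcal{C}$, hence by Lemma \ref{lemconssemi} it is an idempotent in the semigroup $\widehat{\mathcal{C}}$; it remains to check membership in $\widehat{\Gamma}$. But $\gamma(c,d) = \gamma(c,d)\ast 1_c$ and $1_c$ is a bimorphism, so by Lemma \ref{lemug} (with $c_1 = c$, $d_1 = d$, $u = 1_c$) we get $\gamma(c,d)\in\widehat{\Gamma}$. (If $(c,d)\notin E_\Omega$ then $\gamma(c,d)$ is not defined, so the index set on the right should be read as ranging over $E_\Omega$, exactly as in the definition of $\gamma(c,d)$ following (\ref{eqngcd}).) Since $\gamma(c,d)$ is an idempotent and lies in $\widehat{\Gamma}$, it belongs to $E(\widehat{\Gamma})$.

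For the inclusion $\subseteq$, take an idempotent $\epsilon \in E(\widehat{\Gamma})$. By Lemma \ref{lemug}, $\epsilon = \gamma(c_1,d_1)\ast u$ for some bimorphism $u\colon c_1 \to c$ in $\mathcal{C}$ and $(c_1,d_1)\in E_\Omega$. By Lemma \ref{lemconssemi}, $\epsilon$ being idempotent means $\epsilon(c_\epsilon) = 1_{c_\epsilon}$. Here $c_\epsilon$ is the codomain $c$ of $u$, and $\epsilon(c) = \gamma(c_1,d_1)\ast u\,(c) = \gamma(c_1,d_1)(c)\, u$. Since $\gamma(c_1,d_1)$ is an idempotent cone with vertex $c_1$, its component at its own vertex is $1_{c_1}$, so $\epsilon(c_1) = \gamma(c_1,d_1)(c_1)\,u = 1_{c_1} u = u$ (up to composing with the relevant inclusion; I would spell this out via the cone axiom $j_{c_1}^{c}\gamma(c_1,d_1)(c) = \gamma(c_1,d_1)(c_1)$ together with the splitting of inclusions). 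Combining $u = \epsilon(c_1)$ being a bimorphism with the idempotency condition forces $u$ to be an isomorphism: indeed $\epsilon(c) = 1_c$ gives that $u$ has a one-sided inverse which, with $u$ a bimorphism, makes $u$ invertible. Then $\epsilon = \gamma(c_1,d_1)\ast u$ with $u$ an isomorphism, and I would show this cone equals $\gamma(c,d)$ for an appropriate $d$: by \cite[Proposition III.9]{cross} (as used in the proof of Lemma \ref{lemug}) $H(\epsilon;-)$ is comparable to $H(\gamma(c_1,d_1);-) = \Gamma(d_1)$, and since $u$ is an isomorphism the $M$-sets and $H$-functors match up so that, using that $\Gamma$ is a local isomorphism, there is a unique $d$ with $\Gamma(d) = H(\epsilon;-)$ and $c_\epsilon = c$; uniqueness in Lemma \ref{lemmset} then identifies $\epsilon$ with $\gamma(c,d)$.

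The main obstacle I anticipate is the bookkeeping in the $\subseteq$ direction: carefully tracking the vertex of $\epsilon$, the behaviour of the component $\epsilon(c_1)$ versus $\epsilon(c_\epsilon)$, and converting ``bimorphism that is split-epi or split-mono'' into ``isomorphism'' cleanly — together with invoking the uniqueness clause of Lemma \ref{lemmset} to pin down that $\epsilon$ is genuinely one of the canonical idempotent cones $\gamma(c,d)$ rather than merely $\mathscr{R}$-equivalent to one. Everything else is a direct application of the already-established lemmas, so I expect the proof to be short.
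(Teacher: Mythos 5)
Your proposal is correct and takes essentially the same route as the paper's proof: decompose the idempotent via Lemma \ref{lemug}, use idempotency at the vertex plus cancellation against the bimorphism $u$ to show that $\gamma(c_1,d_1)(c)$ is an isomorphism inverse to $u$, and then identify the cone as a canonical $\gamma(c,d)$ via the $H$-functor equality and the uniqueness clause of Lemma \ref{lemmset} together with (\ref{eqngcd}). The only cosmetic difference is that the paper concludes directly with $d=d_1$ (since $H(\epsilon;-)=H(\gamma(c_1,d_1);-)=\Gamma(d_1)$ once $u$ is invertible), so your extra appeal to $\Gamma$ being a local isomorphism, while harmless, is not needed.
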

\begin{proof}
If $\gamma=\gamma(c,d)$, then clearly $\gamma\in E(\widehat{\Gamma})$. Conversely, let $\gamma\in E(\widehat{\Gamma})$, then by the above lemma $\gamma=\gamma(c',d)\ast u$, for a bimorphism $u\colon c'\to c$. Now, since $\gamma$ is an idempotent cone, $\gamma(c)=\gamma(c',d)\ast u \ (c)=\gamma(c',d)(c) \ u =1_c$. Also, $$1_{c'} \ u = u = u\ 1_c = u (\gamma(c',d)(c) \ u)=(u \gamma(c',d)(c))u.$$
From cancellation, we get $1_{c'} =u \gamma(c',d)(c)$. Therefore $\gamma(c',d)(c)$ is an isomorphism with the morphism $u$ as the inverse. So, $c\in M\Gamma(d)=H(\gamma(c',d);-)$ and $\gamma\mathrel{\mathscr{R}}\gamma(c',d)$. So by Proposition \ref{progreen}, the functor $H(\gamma;-)= H(\gamma(c',d);-)$. Now, using the uniqueness in Lemma \ref{lemmset} and (\ref{eqngcd}), we have $\gamma=\gamma(c,d)$.
\end{proof}

To show that $\widehat{\Gamma}$ is concordant, we need to first show that the regular elements in $\widehat{\Gamma}$ form a subsemigroup. Or equivalently as in Proposition \ref{pronormal},  we need to identify a full regular subsemigroup of $\widehat{\Gamma}$ such that their biordered sets are isomorphic.

Recall that every idempotent cone in a consistent category is normal and the cross-connection definition depends only on the idempotents. Also observe that the inclusion functor $J(\overline{\mathcal{C}},\mathcal{C})$ is v-surjective for any consistent category $\mathcal{C}$ and its corresponding normal category $\overline{\mathcal{C}}$ as defined in Lemma \ref{lemnorm}. Further, the biorder quasi orders in the sets $E(\widehat{\overline{\mathcal{C}}})$ and $E(\widehat{\mathcal{C}})$ of idempotents, coincide by the discussion following Lemma \ref{lemgc}.  Hence the following lemma can be easily verified.

\begin{lem}\label{lemugrb}
Let $\Omega=(\Gamma,\Delta;\mathcal{C},\mathcal{D})$ be a cross-connection between consistent categories $\mathcal{C}$ and $\mathcal{D}$. If $\overline{\mathcal{C}}$
and $\overline{\mathcal{D}}$ be normal categories as defined in Lemma \ref{lemnorm}, then $\overline{\Omega}=(\overline{\Gamma},\overline{\Delta};\overline{\mathcal{C}},\overline{\mathcal{D}})$ is a cross-connection between normal categories where $\overline{\Gamma}=\Gamma_{|\overline{\mathcal{D}}}$ and $\overline{\Delta}=\Delta_{|\overline{\mathcal{C}}}$. Further, if $\widehat{\Gamma}$ is the semigroup as defined above in (\ref{eqnug}) and the set $\widehat{\overline{\Gamma}}= \bigcup\: \{  \overline{\Gamma}(c,d) : (c,d) \in v\overline{\mathcal{C}} \times v\overline{\mathcal{D}} \}$, then
\begin{equation*}
\widehat{\overline{\Gamma}}=\{ \gamma\in \widehat{\Gamma} : \gamma\text{ is a normal cone in }\mathcal{C} \}
\end{equation*} 
and $\widehat{\overline{\Gamma}}$ is a full regular subsemigroup of $\widehat{\Gamma}$ such that their biordered sets coincide.
\end{lem}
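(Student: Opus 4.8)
The plan is to leverage the already-established machinery for consistent categories and reduce everything to the normal-category case treated in \cite{cross}. The statement has four parts bundled together: that $\overline{\Omega}$ is a cross-connection between normal categories, the explicit description of $\widehat{\overline{\Gamma}}$, that it is a full regular subsemigroup of $\widehat{\Gamma}$, and that the biordered sets coincide. I would address them in that order.

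First, for $\overline{\Omega}=(\overline{\Gamma},\overline{\Delta};\overline{\mathcal{C}},\overline{\mathcal{D}})$ being a cross-connection between normal categories: by Lemma \ref{lemnorm}, $\overline{\mathcal{C}}$ and $\overline{\mathcal{D}}$ are normal categories. I need to check that $\overline{\Gamma}=\Gamma_{|\overline{\mathcal{D}}}$ lands in $\overline{\mathcal{C}}^*$ (the normal dual of $\overline{\mathcal{C}}$) and is a local isomorphism, and similarly for $\overline{\Delta}$, and that the $M$-set condition (\ref{eqncxnms}) restricts correctly. The key observations to invoke are: every idempotent cone in a consistent category is normal (stated in the text just before the lemma), so $v\mathcal{C}^*=\{H(\epsilon;-):\epsilon\in E(\widehat{\mathcal{C}})\}=\{H(\epsilon;-):\epsilon\in E(\widehat{\overline{\mathcal{C}}})\}$ depends only on idempotents; that $E(\widehat{\overline{\mathcal{C}}})=E(\widehat{\mathcal{C}})$ with coinciding quasi-orders (the discussion following Lemma \ref{lemgc}); and that $J(\overline{\mathcal{C}},\mathcal{C})$ is v-surjective. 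Since $\Gamma$ is a local isomorphism and $\overline{\mathcal{D}}$ is an ideal-closed (full) subcategory with $v\overline{\mathcal{D}}=v\mathcal{D}$, the restriction $\Gamma_{|\overline{\mathcal{D}}}$ is still inclusion-preserving, fully faithful on $\overline{\mathcal{D}}$, and a local isomorphism onto its image inside $\overline{\mathcal{C}}^*$; the $M$-set equivalence is inherited because $M\Gamma(d)$ and $M\Delta(c)$ are defined via isomorphism components, which is a property preserved under the identification $\overline{\mathcal{C}}\hookrightarrow\mathcal{C}$. This is largely bookkeeping once one notes that nothing in the cross-connection axioms sees anything beyond idempotent cones and the preorder/cor structure, all of which agree on $\overline{\mathcal{C}}$ and $\mathcal{C}$.

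Second, for the description $\widehat{\overline{\Gamma}}=\{\gamma\in\widehat{\Gamma}:\gamma\text{ is a normal cone in }\mathcal{C}\}$: by definition $\widehat{\overline{\Gamma}}=\bigcup\{\overline{\Gamma}(c,d):(c,d)\in v\overline{\mathcal{C}}\times v\overline{\mathcal{D}}\}$, and by Lemma \ref{lemug} applied to $\overline{\Omega}$, a cone $\gamma\in\widehat{\overline{\Gamma}}$ iff $\gamma=\gamma(c_1,d_1)\ast u$ with $u$ a bimorphism in $\overline{\mathcal{C}}$ and $(c_1,d_1)\in E_{\overline{\Omega}}$. In a normal category every bimorphism is an isomorphism (the Corollary II.8 of \cite{cross} cited in the text), so $u$ is an isomorphism, whence $\gamma=\gamma(c_1,d_1)\ast u$ is a normal cone (the vertex component of $\gamma(c_1,d_1)$ at $c_1$ is $1_{c_1}$, post-composed with the isomorphism $u$). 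Conversely, if $\gamma\in\widehat{\Gamma}$ is normal, write $\gamma=\gamma(c',d)\ast f^\circ$ and take a consistent factorisation $f^\circ=eu$; since $\gamma$ is normal there is some object where its component is an isomorphism, and tracing through, $u$ must in fact be an isomorphism (equivalently $f^\circ$ has a normal factorisation $eu'$), so $\gamma=\gamma(c_1,d_1)\ast u'$ with $c_1,d_1$ in the normal categories, giving $\gamma\in\widehat{\overline{\Gamma}}$. I expect the forward direction here to need a small argument matching the image/epimorphic-component of a normal cone with membership in $\overline{\mathcal{C}}$, which is the main technical point of this part.

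Third and fourth, that $\widehat{\overline{\Gamma}}$ is a full regular subsemigroup of $\widehat{\Gamma}$ with coinciding biordered sets: this is precisely the content of Proposition \ref{pronormal} applied to the consistent category $\mathcal{C}$, restricted along the subsemigroup inclusion $\widehat{\Gamma}\hookrightarrow\widehat{\mathcal{C}}$. Indeed $\widehat{\overline{\Gamma}}=\widehat{\Gamma}\cap\widehat{\overline{\mathcal{C}}}$ (a cone in $\widehat{\Gamma}$ that is normal in $\mathcal{C}$ is exactly a normal cone in $\overline{\mathcal{C}}$ lying in $\widehat{\Gamma}$), $\widehat{\overline{\mathcal{C}}}$ is a full regular subsemigroup of $\widehat{\mathcal{C}}$ by Proposition \ref{pronormal}, and regularity passes to the intersection with the subsemigroup $\widehat{\Gamma}$; fullness follows since $E(\widehat{\overline{\Gamma}})=E(\widehat{\Gamma})$ by Lemma \ref{lemeug} (both equal $\{\gamma(c,d):(c,d)\in v\mathcal{C}\times v\mathcal{D}\}$, and every such $\gamma(c,d)$ is an idempotent normal cone). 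Finally, the biordered sets coincide because the quasi-orders $\omega^l,\omega^r$ on $E(\widehat{\Gamma})$ are computed by Lemma \ref{lemgc} purely in terms of vertices and epimorphic components among idempotent cones, and these agree whether computed in $\widehat{\Gamma}$, $\widehat{\overline{\Gamma}}$, or $\widehat{\mathcal{C}}$ — exactly as recorded in the discussion following Lemma \ref{lemgc}. The hard part overall is really just organising the reductions cleanly; each individual step is routine given the earlier results, and the genuine work was already done in establishing Lemma \ref{lemug}, Lemma \ref{lemeug} and Proposition \ref{pronormal}.
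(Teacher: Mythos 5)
Your overall route is the same as the paper's: the paper gives no detailed proof here, only the observations you invoke (every idempotent cone is normal, the cross-connection data depend only on idempotents, $J(\overline{\mathcal{C}},\mathcal{C})$ is v-surjective, and the quasi-orders on $E(\widehat{\overline{\mathcal{C}}})=E(\widehat{\mathcal{C}})$ coincide), together with a pointer to the regular-case treatment of $\widehat{\overline{\Gamma}}$ in \cite{cross}. Your use of Lemma \ref{lemug} for $\overline{\Omega}$ plus the fact that bimorphisms in a normal category are isomorphisms, and of Lemma \ref{lemeug} and Lemma \ref{lemgc} for fullness and the biorder, matches that sketch.

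The one step that does not stand as written is the claim that ``regularity passes to the intersection with the subsemigroup $\widehat{\Gamma}$.'' The intersection of a regular subsemigroup with an arbitrary subsemigroup need not be regular (take the regular subsemigroup to be the whole semigroup and intersect with any non-regular subsemigroup), so Proposition \ref{pronormal} for $\widehat{\overline{\mathcal{C}}}\subseteq\widehat{\mathcal{C}}$ does not by itself yield regularity of $\widehat{\overline{\Gamma}}=\widehat{\Gamma}\cap\widehat{\overline{\mathcal{C}}}$. Fortunately your second part already supplies the repair: every $\gamma\in\widehat{\overline{\Gamma}}$ has the form $\gamma=\gamma(c_1,d_1)\ast u$ with $u\colon c_1\to c$ an isomorphism, and choosing $d'\in M\Delta(c)$ (nonempty, since $\Delta(c)=H(\delta;-)$ for an idempotent cone $\delta$ with vertex in $M\Delta(c)$) one checks directly that $\gamma'=\gamma(c,d')\ast u^{-1}\in\widehat{\overline{\Gamma}}$ satisfies $\gamma\cdot\gamma'=\gamma(c_1,d_1)$ and hence $\gamma\gamma'\gamma=\gamma$; alternatively, once your first part is in place, regularity of $\widehat{\overline{\Gamma}}$ is exactly the regular cross-connection theory applied to $\overline{\Omega}$, which is what the paper's reference to \cite[Section IV.5.1]{cross} delivers. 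A small terminological slip: $\overline{\mathcal{D}}$ is a wide (v-surjective) subcategory of $\mathcal{D}$, not a full one, though only v-surjectivity and the idempotent data are actually used in your first part.
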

To see the details of the regular semigroup $\widehat{\overline{\Gamma}}$ (denoted in \cite{cross} as $U\Gamma$), please refer to \cite[Section IV.5.1]{cross}. 
\begin{rmk}
The above lemma reflects the fact that the cross-connection definitions of consistent categories of this article, normal categories in \cite{cross} and even regular partially ordered sets in \cite{gril1} are all equivalent. This is because in all the three cases, we are building the same underlying object: a regular biordered set.
\end{rmk}
Now, we proceed to show that the semigroup $\widehat{\Gamma}$ is concordant.
\begin{pro}\label{prog}
Let $\Omega=(\Gamma,\Delta;\mathcal{C},\mathcal{D})$ be a cross-connection and  $\widehat{\Gamma}$  be the semigroup defined in (\ref{eqnug}). Then $\widehat{\Gamma}$ is a concordant semigroup.
\end{pro}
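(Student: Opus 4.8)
The plan is to verify the three defining properties of a concordant semigroup for $\widehat{\Gamma}$: that it is abundant, idempotent-connected, and its idempotents generate a regular subsemigroup. The strategy throughout is to mirror the arguments already carried out for $\widehat{\mathcal{C}}$ in Theorem \ref{thmconc}, now restricted to the subsemigroup $\widehat{\Gamma}$, using the structural description of $\widehat{\Gamma}$ and $E(\widehat{\Gamma})$ furnished by Lemma \ref{lemug} and Lemma \ref{lemeug}, together with the cross-connection machinery.

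First I would establish abundance. By Lemma \ref{lemug}, an arbitrary element $\gamma\in\widehat{\Gamma}$ has the form $\gamma=\gamma(c_1,d_1)\ast u$ with $u$ a bimorphism in $\mathcal{C}$ and $(c_1,d_1)\in E_\Omega$; say $u\colon c_1\to c$. Then $\gamma(c_1,d_1)$ is an idempotent cone in $\widehat{\Gamma}$ with vertex $c_1$, so it plays the role of $\epsilon$ in Lemma \ref{lemab}. For the $\mathscr{L}^*$-side, I need an idempotent of $\widehat{\Gamma}$ with vertex $c$: since $(c_1,d_1)\in E_\Omega$ and $u\colon c_1\to c$ is a bimorphism, I would use the consistency of $u$ (axiom CC 4) — $u$ induces an isomorphism $\langle c_1\rangle\to\langle c\rangle$, which transports the relevant data so that $c\in M\Gamma(d_1')$ for a suitable subobject, hence $(c,d')\in E_\Omega$ for an appropriate $d'$, giving the idempotent cone $\delta=\gamma(c,d')\in E(\widehat{\Gamma})$. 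Then Lemma \ref{lemab} applied inside $\mathcal{C}$ (noting $c_\gamma=c=c_\delta$) yields $\gamma(c_1,d_1)\mathscr{R}^*\gamma\mathscr{L}^*\delta$ in $\widehat{\mathcal{C}}$; since $\mathscr{R}^*$ and $\mathscr{L}^*$ are computed in oversemigroups, these relations persist in the subsemigroup $\widehat{\Gamma}$, proving abundance.

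Next I would handle idempotent-connectedness. Here the argument of Lemma \ref{lemic} carries over almost verbatim: given $\gamma=\gamma(c_1,d_1)\ast u\in\widehat{\Gamma}$ with $\mathscr{R}^*$- and $\mathscr{L}^*$-idempotents $\epsilon=\gamma(c_1,d_1)$ and $\delta$ as above, the bimorphism $u$ is consistent (CC 4), so $T^u$ extends uniquely to an isomorphism $T\colon\langle c_\epsilon\rangle\to\langle c_\delta\rangle$; this gives the splitting data $h_i,k_i$ for subobjects $c_{\epsilon^i}\subseteq c_\epsilon$, and one defines $\beta\colon\omega(\epsilon)\to\omega(\delta)$ by $\epsilon^i\mapsto\delta\ast k_i$ exactly as before. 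The point requiring a line of justification is that $\beta$ maps into $\omega(\delta)$ \emph{within} $\widehat{\Gamma}$ — i.e. that $\delta\ast k_i$ again lies in $\widehat{\Gamma}$; this follows from Lemma \ref{lemug} (it is of the form idempotent-cone $\ast$ retraction, hence in $\widehat{\Gamma}$ by closure, since $\epsilon^i\omega\epsilon$ and the $\epsilon^i$ are idempotents of $\widehat{\Gamma}$, which by Lemma \ref{lemeug} are precisely the $\gamma(c',d')$). The computation $\epsilon^i\cdot\gamma=\gamma\ast k_i=\gamma\cdot(\epsilon^i\beta)$ is then identical to that in Lemma \ref{lemic}, and Lemma \ref{lemico} finishes it.

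Finally, for the regular-subsemigroup condition I would invoke Lemma \ref{lemugrb}: it identifies $\widehat{\overline{\Gamma}}$ — the normal cones in $\widehat{\Gamma}$ — as a full regular subsemigroup of $\widehat{\Gamma}$ whose biordered set coincides with $E(\widehat{\Gamma})$. Since regularity of the subsemigroup generated by the idempotents is a property of the biordered set alone (the biordered set is regular iff all sandwich sets are nonempty, by \cite{mem}), and $E(\widehat{\overline{\Gamma}})=E(\widehat{\Gamma})$ is already known to be regular from the normal-category side, the idempotents of $\widehat{\Gamma}$ generate a regular subsemigroup. Assembling the three parts gives that $\widehat{\Gamma}$ is concordant. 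The main obstacle I anticipate is the bookkeeping in the abundance step — producing the correct $\mathscr{L}^*$-idempotent $\gamma(c,d')\in E(\widehat{\Gamma})$ (rather than merely in $E(\widehat{\mathcal{C}})$) by tracking a subobject $d'$ of $d_1$ through the local isomorphism $\Gamma$ and the consistency isomorphism of $u$; everything else is a direct transcription of the arguments for $\widehat{\mathcal{C}}$ restricted to $\widehat{\Gamma}$.
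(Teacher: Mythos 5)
Your proposal follows essentially the same route as the paper's proof: decompose $\gamma=\gamma(c_1,d_1)\ast u$ via Lemma \ref{lemug}, take $\gamma(c_1,d_1)$ and an idempotent $\gamma(c,d')\in E(\widehat{\Gamma})$ at the codomain of $u$ as the $\mathscr{R}^*$- and $\mathscr{L}^*$-idempotents (the paper gets $d'$ simply by choosing $d'\in M\Delta(c)$ and invoking the cross-connection condition (\ref{eqncxnms}), with no need to transport subobjects through the consistency isomorphism of $u$), verify the starred relations as in Lemma \ref{lemab}, deduce idempotent-connectedness from consistency of $u$ as in Lemma \ref{lemic}, and obtain the regular subsemigroup generated by the idempotents from Lemma \ref{lemugrb}. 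Your additional observations---that $\mathscr{R}^*$/$\mathscr{L}^*$ relations holding in $\widehat{\mathcal{C}}$ restrict to the subsemigroup $\widehat{\Gamma}$, and that $\delta\ast k_i$ lands back in $E(\widehat{\Gamma})$ (properly justified by the idempotent-cone-times-retraction argument inside the proof of Lemma \ref{lemug}, i.e.\ via the local isomorphism $\Gamma$, rather than by the statement of that lemma)---are sound and merely make explicit what the paper leaves to ``arguing similarly as in Lemma \ref{lemab} and Lemma \ref{lemic}''.
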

\begin{proof}
By Lemma \ref{lemugrb}, the regular elements in $\widehat{\Gamma}$ form a subsemigroup $\widehat{\overline{\Gamma}}$.

Let $\gamma\in \widehat{\Gamma}$ then $\gamma=\gamma(c,d)\ast u$ where $u\colon c \to c'$ is a bimorphism in $\mathcal{C}$. Now, define $\gamma^\dagger=\gamma(c,d)$ and  $\gamma^*=\gamma(c',d')$ where $d'\in M\Delta(c')$. Then similar to the proof of Lemma \ref{lemab}, we can verify that $\gamma^\dagger\: \mathrel{\mathscr{R}}^*\: \gamma\: \mathrel{\mathscr{L}}^*\: \gamma^*$. Hence $\widehat{\Gamma}$ is an abundant semigroup.  

Finally, since $\mathcal{C}$ is consistent, the bimorphism $u$ is consistent. So, arguing similarly as in the proof of Lemma \ref{lemic}, we can show that $\widehat{\Gamma}$ is idempotent-connected. Hence $\widehat{\Gamma}$ is a concordant semigroup.
\end{proof}

Further, we have the following exact generalisation of \cite[Proposition IV.31]{cross}. 
\begin{pro}
Let $\Omega=(\Gamma,\Delta;\mathcal{C},\mathcal{D})$ be a cross-connection and  $\widehat{\Gamma}$ be the semigroup defined in (\ref{eqnug}) with the set of idempotents $E(\widehat{\Gamma})$ as defined in Lemma \ref{lemeug}. Then $\widehat{F}\colon \mathcal{C}\to \mathbb{L}(\widehat{\Gamma})$ defined by:
\begin{equation*}
v\widehat{F}(c) \ = \widehat{\Gamma}\gamma(c,d) \text{ and } \widehat{F}(f)=\rho(\gamma(c,d), \gamma(c,d)\ast f^\circ,\gamma(c',d'))_{|\widehat{\Gamma}}
\end{equation*}
for all $c\in v\mathcal{C}$ and $f\in \mathcal{C}(c,c')$, is an isomorphism.
\end{pro}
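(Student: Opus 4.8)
The plan is to show that $\widehat{F}$ is an inclusion preserving isomorphism of categories, running parallel to the proof of \cite[Proposition IV.31]{cross} and making throughout the routine substitutions of ``bimorphism'' for ``isomorphism'' and ``consistent factorisation'' for ``normal factorisation''. First I would settle that $v\widehat{F}$ is well defined, that is, that $\widehat{\Gamma}\gamma(c,d)$ does not depend on the choice of $d$ with $(c,d)\in E_\Omega$: if $(c,d),(c,d')\in E_\Omega$ then $\gamma(c,d)$ and $\gamma(c,d')$ are idempotent cones with the same vertex $c$, so a short computation with the composition (\ref{eqnbin}) gives $\gamma(c,d)\,\gamma(c,d')=\gamma(c,d)$ and $\gamma(c,d')\,\gamma(c,d)=\gamma(c,d')$, whence $\gamma(c,d)\,\mathscr{L}\,\gamma(c,d')$ in $\widehat{\Gamma}$ and the principal left ideals coincide. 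By Lemma \ref{lemeug} every object of $\mathbb{L}(\widehat{\Gamma})$ is of the form $\widehat{\Gamma}\gamma(c,d)$ with $(c,d)\in E_\Omega$, so $v\widehat{F}$ is onto; and if $\widehat{\Gamma}\gamma(c,d)=\widehat{\Gamma}\gamma(c',d')$ then $\gamma(c,d)\,\omega^l\,\gamma(c',d')$ and conversely, so Lemma \ref{lemgc} forces $c=c'$, giving injectivity.

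Next I would check that $\widehat{F}(f)$ is a well-defined morphism of $\mathbb{L}(\widehat{\Gamma})$. Factoring $f^\circ=eu$ consistently one has $\gamma(c,d)\ast f^\circ=(\gamma(c,d)\ast e)\ast u$, which lies in $\Gamma(c',d)\subseteq\widehat{\Gamma}$ by the description of the bifunctor $\Gamma(-,-)$ recorded before Theorem \ref{thmnatiso}; using the cone identity $\gamma(c',d')(c'')=j_{c''}^{c'}$ for $c''\subseteq c'$ and the fact that an inclusion has trivial epimorphic component, one verifies $\gamma(c,d)\cdot(\gamma(c,d)\ast f^\circ)\cdot\gamma(c',d')=\gamma(c,d)\ast f^\circ$, so $\widehat{F}(f)$ is indeed a morphism $\widehat{\Gamma}\gamma(c,d)\to\widehat{\Gamma}\gamma(c',d')$; independence of the choices of $d,d'$ follows from the equality criterion $\rho(e,u,f)=\rho(g,v,h)\iff e\,\mathscr{L}\,g,\ f\,\mathscr{L}\,h,\ v=gu$ together with $(\gamma(c,d)\ast f^\circ)(c)=f^\circ$. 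Functoriality is then immediate for identities, $\widehat{F}(1_c)=\rho(\gamma(c,d),\gamma(c,d),\gamma(c,d))$, while $\widehat{F}(fg)=\widehat{F}(f)\widehat{F}(g)$ reduces to the identity $(\gamma(c,d)\ast f^\circ)(\gamma(c',d')\ast g^\circ)=\gamma(c,d)\ast(fg)^\circ$, obtained by pushing (\ref{eqnbin}) through the composite and using that composites of epimorphisms are epimorphisms and that ``epimorphism followed by inclusion'' is a consistent factorisation (cf.\ the computations behind Lemma \ref{lemconssemi} and \cite[Theorem I.2]{cross}).

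For full faithfulness I would fix $c,c'$ and show that $f\mapsto\gamma(c,d)\ast f^\circ$ is a bijection of $\mathcal{C}(c,c')$ onto $\gamma(c,d)\widehat{\Gamma}\gamma(c',d')$, which for fixed endpoints is in bijection with $\mathbb{L}(\widehat{\Gamma})(\widehat{\Gamma}\gamma(c,d),\widehat{\Gamma}\gamma(c',d'))$. Injectivity is clear since evaluating the cone at $c$ returns $f^\circ$ and $f$ is recovered from $f^\circ$ via $f=f^\circ\,j^{c'}_{\operatorname{im}f}$; surjectivity uses Lemma \ref{lemug}: for $v\in\gamma(c,d)\widehat{\Gamma}\gamma(c',d')$ the relation $\gamma(c,d)v=v$ gives $v=\gamma(c,d)\ast(v(c))^\circ$, and $v\gamma(c',d')=v$ forces the vertex of $v$ to be a subobject of $c'$, so $h:=(v(c))^\circ\,j^{c'}_{\operatorname{im}v(c)}$ belongs to $\mathcal{C}(c,c')$ with $\gamma(c,d)\ast h^\circ=v$. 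Finally, since $(j_{c'}^{c})^\circ=1_{c'}$ one gets $\widehat{F}(j_{c'}^{c})=\rho(\gamma(c',d'),\gamma(c',d'),\gamma(c,d))$, which is exactly the inclusion $\widehat{\Gamma}\gamma(c',d')\hookrightarrow\widehat{\Gamma}\gamma(c,d)$ (this ideal inclusion holding, by Lemma \ref{lemgc}, precisely when $c'\subseteq c$); so $\widehat{F}$ preserves and reflects inclusions, and is therefore an isomorphism of consistent categories.

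The step I expect to require the most care is the functoriality identity $(\gamma(c,d)\ast f^\circ)(\gamma(c',d')\ast g^\circ)=\gamma(c,d)\ast(fg)^\circ$, since it is the one place where the composition (\ref{eqnbin}) must be transported through a non-trivial composite while keeping track of images and retractions, and where one uses axiom (CC 4) (``every bimorphism is consistent'') in place of the stronger fact that bimorphisms in a normal category are isomorphisms; all the remaining verifications are direct transcriptions of the arguments of \cite[Section IV.5]{cross}.
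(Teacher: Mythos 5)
Your proposal is correct and takes essentially the same route as the paper, which states this proposition without proof as an "exact generalisation" of \cite[Proposition IV.31]{cross}: you simply carry out that adaptation, replacing isomorphisms by bimorphisms and normal by consistent factorisations, and the individual verifications (well-definedness via $\gamma(c,d)\,\mathscr{L}\,\gamma(c,d')$, functoriality via $(\gamma(c,d)\ast f^\circ)\cdot(\gamma(c',d')\ast g^\circ)=\gamma(c,d)\ast(fg)^\circ$, full faithfulness via $f\mapsto\gamma(c,d)\ast f^\circ$, and inclusion preservation via Lemma \ref{lemgc}) all check out. No substantive deviation or gap to report.
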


\begin{rmk}
Dually we can show that $\widehat{\Delta}$ is also a concordant semigroup such that $\mathbb{L}(\widehat{\Delta})$ is isomorphic to $\mathcal{D}$.
\end{rmk}

Now, we proceed to build the cross-connection semigroup associated with the cross-connection as a subdirect product of the concordant semigroups $\widehat{\Gamma}$ and $\widehat{\Delta}$. Recall that $\chi$ as defined in Theorem \ref{thmnatiso} is a natural isomorphism between the bifunctors $\Gamma(-,-)$ and $\Delta(-,-)$. This give rise to a `linking' between the concordant semigroups $\widehat{\Gamma}$ and $\widehat{\Delta}$.
\begin{dfn}
Given a cross-connection $\Omega=(\Gamma,\Delta;\mathcal{C},\mathcal{D})$, a consistent cone $\gamma \in \widehat{\Gamma}$ is said to be \emph{linked} to $\delta \in \widehat{\Delta}$ if there is a $(c,d) \in v\mathcal{C} \times v\mathcal{D}$ such that $\gamma \in \Gamma(c,d)$ and $ \delta = \chi(c,d)(\gamma)$; we then say that the pair $(\gamma,\delta)$ is a linked pair.
\end{dfn} 

\subsection{The cross-connection semigroup}
Given a cross-connection $\Omega=(\Gamma,\Delta;\mathcal{C},\mathcal{D})$ of consistent categories $\mathcal{C}$ and $\mathcal{D}$, define the set 
\begin{equation}
\mathbb{S}\Omega=\{ (\gamma,\delta) \in \widehat{\Gamma}\times \widehat{\Delta} : (\gamma,\delta) \text{ is linked }\:\}.
\end{equation}
Define an operation on $\mathbb{S}\Omega$ as follows: 
\begin{equation*}
(\gamma , \delta) \circ ( \gamma' , \delta') = (\gamma \cdot \gamma' , \delta' \cdot \delta) \  \text{  for all  }(\gamma,\delta),( \gamma' , \delta') \in \mathbb{S}\Omega.
\end{equation*} 	
Suppose $(\gamma,\delta),(\gamma',\delta') \in \mathbb{S}\Omega$, then as in the \cite[Lemma IV.30]{cross}, we have $\gamma \cdot \gamma'$ is linked to $ \delta' \cdot \delta$. Hence $\mathbb{S}\Omega$ is a semigroup and it will be called the \emph{cross-connection semigroup}  determined by $\Omega$.
\begin{lem}\label{lemesg}
Let $\Omega=(\Gamma,\Delta;\mathcal{C},\mathcal{D})$ be a cross-connection with $\mathbb{S}\Omega$ as the cross-connection semigroup, then the set of idempotents of $\mathbb{S}\Omega$ is given by:
\begin{equation*}
E(\mathbb{S}\Omega)=\{(\gamma(c,d),\delta(c,d)) : (c,d)\in E_\Omega \}
\end{equation*}
\end{lem}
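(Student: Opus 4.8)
The plan is to characterize the idempotents of $\mathbb{S}\Omega$ directly from the definition of the semigroup operation, using the fact that we already know the idempotents of the component semigroups $\widehat{\Gamma}$ and $\widehat{\Delta}$ (Lemma \ref{lemeug} and its dual) together with the structure of linked pairs. Recall that a pair $(\gamma,\delta) \in \mathbb{S}\Omega$ is idempotent if and only if $(\gamma \cdot \gamma, \delta \cdot \delta) = (\gamma, \delta)$, i.e., if and only if $\gamma$ is idempotent in $\widehat{\Gamma}$ and $\delta$ is idempotent in $\widehat{\Delta}$. So the first step is to observe that by Lemma \ref{lemeug}, $\gamma = \gamma(c,d)$ for some $(c,d)$ with $c \in M\Gamma(d)$, i.e., $(c,d) \in E_\Omega$; and dually $\delta = \delta(c',d')$ for some $(c',d') \in E_\Omega$.

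Next I would pin down the vertices. Since $(\gamma(c,d), \delta(c',d'))$ is a linked pair, there is some $(a,b) \in v\mathcal{C} \times v\mathcal{D}$ with $\gamma(c,d) \in \Gamma(a,b)$ and $\delta(c',d') = \chi(a,b)(\gamma(c,d))$. From the description $\Gamma(a,b)=\{\gamma(c'',b)\ast f^\circ : c'' \in M\Gamma(b),\ f\in \mathcal{C}(c'',a)\}$ preceding Theorem \ref{thmnatiso}, membership of the \emph{idempotent} cone $\gamma(c,d)$ in $\Gamma(a,b)$ forces, via the uniqueness built into Lemma \ref{lemmset} and equation (\ref{eqngcd}), the identification $d = b$ (the functor value $\Gamma(b)=H(\gamma(c,d);-)$) and $a = c$ (so that the epimorphic component $f^\circ$ is $1_c$, which is what makes $\gamma(c,d)\ast f^\circ$ idempotent with vertex $c$). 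Then the transpose of $f = 1_c$ is $1_d$, and consulting the formula for $\chi(a,b) = \chi(c,d)$ in Theorem \ref{thmnatiso} gives $\delta(c',d') = \chi(c,d)(\gamma(c,d)) = \delta(c,d') \ast (1_d)^\circ$ for the appropriate $d'$; since $\chi$ carries the idempotent cone $\gamma(c,d)$ to an idempotent cone with the \emph{same} first vertex $c$, and by uniqueness of the idempotent cone $\delta(c,d)$ (its dual version of (\ref{eqngcd})) this must be exactly $\delta(c,d)$. Hence $c' = c$, $d' = d$, and $(\gamma,\delta) = (\gamma(c,d), \delta(c,d))$ with $(c,d) \in E_\Omega$.

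Conversely, for any $(c,d) \in E_\Omega$, the cone $\gamma(c,d)$ is an idempotent of $\widehat{\Gamma}$ and $\delta(c,d)$ is an idempotent of $\widehat{\Delta}$, so $(\gamma(c,d),\delta(c,d)) \circ (\gamma(c,d),\delta(c,d)) = (\gamma(c,d),\delta(c,d))$; it only remains to check this pair is linked, i.e., that $\chi(c,d)$ sends $\gamma(c,d)$ to $\delta(c,d)$. This is immediate from Theorem \ref{thmnatiso}: $\gamma(c,d) = \gamma(c,d) \ast 1_c^\circ \in \Gamma(c,d)$, its transpose is $1_d$, and $\chi(c,d)\colon \gamma(c,d)\ast 1_c^\circ \mapsto \delta(c,d)\ast 1_d^\circ = \delta(c,d)$.

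I expect the main obstacle to be the bookkeeping in the forward direction: extracting from the single linking datum $(a,b)$ the conclusion that $a = c$, $b = d$ \emph{and simultaneously} that the second coordinate is forced to be $\delta(c,d)$ rather than merely some $\delta(c,d')$. This rests on the uniqueness clauses in Lemma \ref{lemmset}/(\ref{eqngcd}) for both $\gamma(c,d)$ and $\delta(c,d)$, and on the fact that a bimorphism appearing in the expression $\gamma(c',d)\ast f^\circ$ must be an identity for the cone to be idempotent (Lemma \ref{lemeug}). Everything else is routine once the vertices are matched up, and the argument is the exact analogue of \cite[Lemma IV.30]{cross} in the regular case.
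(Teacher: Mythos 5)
Your reduction to component idempotency, your use of Lemma \ref{lemeug} (and its dual) to write $\gamma=\gamma(c,d)$, and your easy direction all match the paper's proof, which likewise identifies the first coordinate via Lemma \ref{lemeug} and then pins down the second coordinate through the linking map $\chi$. The genuine gap is the step you yourself flag as the main obstacle: the claim that membership of the idempotent cone $\gamma(c,d)$ in $\Gamma(a,b)$ \emph{forces} $a=c$ and $b=d$, with $f^\circ=1_c$. This is false. Indeed $\Gamma(a,b)=H(\epsilon;a)=\{\epsilon\ast f^\circ: f\in\mathcal{C}(c_\epsilon,a)\}$ where $\Gamma(b)=H(\epsilon;-)$, and an element $\epsilon\ast f^\circ$ has vertex equal to the image of $f$, which is merely a subobject of $a$, while its $H$-functor is merely contained in $\Gamma(b)$. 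Already for a regular semigroup $S$ one has $\Gamma_S(Sf,eS)=\{\rho^u:u\in eSf\}$, so the idempotent cone $\rho^g=\gamma(Sg,gS)$ lies in $\Gamma_S(Sf,eS)$ whenever $g\omega^r e$ and $g\omega^l f$, which can happen with $Sg\subsetneq Sf$ and $gS\subsetneq eS$; note also that $\rho^g=\rho^e\ast\rho(e,g,f)^\circ$ is idempotent even though the representing morphism is not an identity. The uniqueness in Lemma \ref{lemmset} and (\ref{eqngcd}) says only that vertex plus $H$-functor determine the idempotent cone; it does not restrict which sets $\Gamma(a,b)$ that cone belongs to. Consequently your computation of $\chi(a,b)(\gamma(c,d))$, carried out under the assumption $f=1_c$ with transpose $1_d$, does not cover a general linking witness $(a,b)$, and the forward inclusion is not established as written.

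The conclusion is still true, and the repair uses material you did not invoke: Theorem \ref{thmnatiso} asserts that $(c,d)\mapsto\chi(c,d)$ is a \emph{natural} isomorphism of the bifunctors $\Gamma(-,-)$ and $\Delta(-,-)$. For the inclusion morphism $(j_{c}^{a},j_{d}^{b})\colon(c,d)\to(a,b)$ one checks that $\Gamma(j_{c}^{a},j_{d}^{b})$ sends $\gamma(c,d)$ to itself (since $(j_{c}^{a})^\circ=1_c$ and $\Gamma(j_{d}^{b})$ acts by inclusion of sets) and likewise $\Delta(j_{c}^{a},j_{d}^{b})$ fixes $\delta(c,d)$; evaluating the naturality square at $\gamma(c,d)$ then gives $\chi(a,b)(\gamma(c,d))=\delta(c,d)$ for \emph{every} witness $(a,b)$, hence $\delta=\delta(c,d)$. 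This independence of the witness is exactly what the paper's proof compresses into ``by the well-definedness of $\chi(c,d)$''; note the paper does not even need $\delta^2=\delta$ or the dual of Lemma \ref{lemeug} for the second coordinate. Alternatively you could keep your route via $\delta=\delta(c',d')$ and then match vertices and $H$-functors using the $\mathscr{R}$- and $\mathscr{L}$-relations of Proposition \ref{progreen}, but some such argument must replace the incorrect forcing claim.
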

\begin{proof}
Clearly, since $\gamma(c,d)$ and $\delta(c,d)$ are idempotents and $ \delta(c,d) = \chi_\Gamma(c,d)(\gamma(c,d))$, we see that $(\gamma(c,d),\delta(c,d)) \in E_\Omega$. Conversely, if $(\gamma,\delta)$ is an idempotent, then $\gamma^2=\gamma$. So, by Lemma \ref{lemeug} we have $\gamma\in E(\widehat{\Gamma})$ and hence $\gamma=\gamma(c,d)$ for some $(c,d)\in E_\Omega$. Then $\chi_\Gamma(c,d)(\gamma(c,d))=\delta(c,d)$. Then by the well-definedness of $\chi(c,d)$, we have $\delta=\delta(c,d)$. Hence the lemma.
\end{proof}

Emulating the discussion in \cite[Section V.1.2]{cross}, we can see that the set $E_\Omega$ is bijective with the set $E(\mathbb{S}\Omega)$ under the map 
$$(c,d)\mapsto(\gamma(c,d),\delta(c,d)).$$ 
As outlined in Lemma \ref{lemugrb}, the set of idempotents of $E(\mathbb{S}\Omega)$ and $E(\mathbb{S}\overline{\Omega})$ are equal. Further, as in \cite[Section V.1.2]{cross}, we can show that biorder quasi orders in the set $E(\mathbb{S}\Omega)=E_\Omega$ is given by:
\begin{equation*}
(c,d)\mathrel{\omega}^l(c',d') \iff c\subseteq c' \ \text{ and } \ (c,d)\mathrel{\omega}^r(c',d') \iff d\subseteq d'.
\end{equation*}
Then $E_\Omega$ forms a regular biordered set with the basic products and sandwich sets as described in \cite[Section V.1.2]{cross}.

\begin{thm}\label{thmcxncon}
Given a cross-connection $\Omega=(\Gamma,\Delta;\mathcal{C},\mathcal{D})$ of consistent categories $\mathcal{C}$ and $\mathcal{D}$, the cross-connection semigroup $\mathbb{S}\Omega$ is concordant.
\end{thm}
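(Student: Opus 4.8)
The plan is to verify the three defining properties of a concordant semigroup for $\mathbb{S}\Omega$ in turn: that its idempotents generate a regular subsemigroup, that it is abundant, and that it is idempotent-connected. The first is already at hand. By Lemma~\ref{lemesg} the map $(c,d)\mapsto(\gamma(c,d),\delta(c,d))$ is a bijection of $E_\Omega$ onto $E(\mathbb{S}\Omega)$, and the paragraph preceding the theorem records that $E_\Omega$, equipped with the quasi-orders $(c,d)\,\omega^l\,(c',d')\iff c\subseteq c'$ and $(c,d)\,\omega^r\,(c',d')\iff d\subseteq d'$, is a regular biordered set; by \cite{mem} a semigroup whose idempotents form a regular biordered set has its idempotents generating a regular subsemigroup. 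So only abundance and idempotent-connectedness remain, and for both I would transport the arguments of Lemma~\ref{lemab} and Lemma~\ref{lemic} through the two coordinate projections of $\mathbb{S}\Omega$ onto $\widehat{\Gamma}$ and $\widehat{\Delta}$.

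A preliminary remark I would record first: because of the twisted product $(\gamma,\delta)\circ(\gamma',\delta')=(\gamma\gamma',\delta'\delta)$, the following one-sided implication holds unconditionally --- if $\gamma\,\mathscr{R}^*\,\gamma'$ in $\widehat{\Gamma}$ and $\delta\,\mathscr{L}^*\,\delta'$ in $\widehat{\Delta}$, then $(\gamma,\delta)\,\mathscr{R}^*\,(\gamma',\delta')$ in $\mathbb{S}\Omega$, and dually with $\mathscr{L}^*$ and $\mathscr{R}^*$ interchanged. This is immediate once one left-multiplies a pair from $\mathbb{S}\Omega$ and compares coordinates. Next I would attach to a general linked pair $(\gamma,\delta)$ its idempotent cones. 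By Lemma~\ref{lemug}, $\gamma=\gamma(c_1,d_1)\ast u$ with $u\colon c_1\to c_\gamma$ a bimorphism in $\mathcal{C}$ and $(c_1,d_1)\in E_\Omega$; using Theorem~\ref{thmnatiso} the linking condition then forces $\delta=\delta(c_\gamma,d')\ast u^{\ddagger}$, where $d'\in M\Delta(c_\gamma)$ and $u^{\ddagger}\colon d'\to d_1$ is the transpose of $u$, which one checks is again a bimorphism; in particular $c_\delta=d_1$, so $\delta$ shares its vertex with the idempotent cone $\delta(c_1,d_1)$. The cross-connection identity (\ref{eqncxnms}) gives $(c_\gamma,d')\in E_\Omega$, so $\epsilon^{\dagger}=(\gamma(c_1,d_1),\delta(c_1,d_1))$ and $\epsilon^{*}=(\gamma(c_\gamma,d'),\delta(c_\gamma,d'))$ are idempotents of $\mathbb{S}\Omega$. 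Proposition~\ref{prog} and its dual then give $\gamma(c_1,d_1)\,\mathscr{R}^*\,\gamma$, $\delta(c_1,d_1)\,\mathscr{L}^*\,\delta$, $\gamma(c_\gamma,d')\,\mathscr{L}^*\,\gamma$ and $\delta(c_\gamma,d')\,\mathscr{R}^*\,\delta$; combining with the preliminary remark yields $\epsilon^{\dagger}\,\mathscr{R}^*\,(\gamma,\delta)\,\mathscr{L}^*\,\epsilon^{*}$, so every $\mathscr{R}^*$- and every $\mathscr{L}^*$-class of $\mathbb{S}\Omega$ meets $E(\mathbb{S}\Omega)$; thus $\mathbb{S}\Omega$ is abundant.

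For idempotent-connectedness I would mimic Lemma~\ref{lemic}. By Lemma~\ref{lemico} it suffices, for the linked pair $(\gamma,\delta)$ above, to produce a unique bijection $\beta\colon\omega(\epsilon^{\dagger})\to\omega(\epsilon^{*})$ satisfying $\epsilon^i\circ(\gamma,\delta)=(\gamma,\delta)\circ(\epsilon^i\beta)$ for all $\epsilon^i\in\omega(\epsilon^{\dagger})$. Using the description of $\omega$ on $E_\Omega$, such an $\epsilon^i$ is an idempotent $(\gamma(c'',d''),\delta(c'',d''))$ with $c''\subseteq c_1$ and $d''\subseteq d_1$. Since $\mathcal{C}$ and $\mathcal{D}$ are consistent, $u$ and $u^{\ddagger}$ are consistent bimorphisms, so $T^{u}$ and $T^{u^{\ddagger}}$ extend uniquely to isomorphisms $\langle c_1\rangle\to\langle c_\gamma\rangle$ and $\langle d'\rangle\to\langle d_1\rangle$ carrying natural isomorphisms; reading off the relevant retractions exactly as in Lemma~\ref{lemic} produces connecting bijections on the $\widehat{\Gamma}$- and $\widehat{\Delta}$-sides. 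After inverting one of them (forced by the twist in the product of $\mathbb{S}\Omega$), I would check that the two agree under the identification $E(\mathbb{S}\Omega)\cong E_\Omega$ --- this compatibility is precisely what the naturality of $\chi$ between the bifunctors $\Gamma(-,-)$ and $\Delta(-,-)$ (Theorem~\ref{thmnatiso}) supplies --- so they glue to a single bijection $\beta$ on $\omega(\epsilon^{\dagger})$. The required identity then separates into its two coordinates, each of which is the identity (or its inverse) already verified in Lemma~\ref{lemic} and its dual. With abundance, idempotent-connectedness, and the regular-biordered-set condition all established, $\mathbb{S}\Omega$ is concordant.

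The step I expect to be the main obstacle is the bookkeeping around the linking condition inside the abundance argument --- showing that the idempotent cones naturally attached to $(\gamma,\delta)$ on the $\mathcal{C}$-side and on the $\mathcal{D}$-side really assemble into a \emph{linked} pair (hence an element of $\mathbb{S}\Omega$) and really land in the prescribed generalised Green class. A naive subdirect product of the abundant semigroups $\widehat{\Gamma}$ and $\widehat{\Delta}$ need not be abundant, so it is essential that the natural isomorphism $\chi$ --- equivalently, the compatibility of transposes with the consistent factorisations --- supplies the missing coherence; the same mechanism is what makes the two connecting bijections in the idempotent-connectedness step match up. Everything else is a faithful translation of Nambooripad's treatment of the regular case \cite[Section V]{cross}.
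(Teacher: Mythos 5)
Your proposal follows essentially the same route as the paper's proof: idempotents handled via the regular biordered set $E_\Omega$, abundance via the cross-paired idempotents $(\gamma^\dagger,\delta^*)$ and $(\gamma^*,\delta^\dagger)$ obtained from Lemma~\ref{lemug}, Proposition~\ref{prog} and its dual, and idempotent-connectedness by combining the coordinate connecting isomorphisms with one of them inverted to account for the twisted product. Your extra care about transferring the $\mathscr{R}^*$/$\mathscr{L}^*$ relations coordinatewise into the subsemigroup and about the linking compatibility via $\chi$ merely makes explicit steps the paper leaves as routine verification.
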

\begin{proof}
First, as discussed above $E(\mathbb{S}\Omega)$ is a regular biordered set and hence the regular elements in  $\mathbb{S}\Omega$ form a regular subsemigroup.

Now,  given $(\gamma,\delta) \in \mathbb{S}\Omega$, suppose $\gamma=\gamma(c,d)\ast u$ where $u\colon c\to c'$ is a bimorphism, then 
$$\delta=\chi(c',d)(\gamma)=\chi(c',d)(\gamma(c,d)\ast u) = \delta(c',d') \ast u^\ddagger$$
where $d'\in M\Delta(c')$ and $u^\ddagger\colon d' \to d$ is the transpose of $u$.
Now, define the idempotent cones $\gamma^\dagger=\gamma(c,d)$,  $\gamma^*=\gamma(c',d')$, $\delta^\dagger=\delta(c',d')$ and $\delta^*=\delta(c,d)$. Then as in Lemma \ref{lemab} and Proposition \ref{prog}, we can verify that $\gamma^\dagger \ \mathrel{\mathscr{R}}^* \ \gamma \ \mathrel{\mathscr{L}}^* \ \gamma^*$ and $\delta^\dagger \ \mathrel{\mathscr{R}}^* \ \delta \ \mathrel{\mathscr{L}}^* \ \delta^*$. Hence we have  $(\gamma^\dagger,\delta^*) \ \mathrel{\mathscr{R}}^* \ (\gamma,\delta) \ \mathrel{\mathscr{L}}^* \ (\gamma^*,\delta^\dagger)$. Thus $ \mathbb{S}\Omega$ is abundant. 

Finally, as described in Lemma \ref{lemic} and Proposition \ref{prog}, there are connecting isomorphisms $\alpha\colon \langle \gamma^\dagger\rangle \to \langle \gamma^* \rangle $ and $\beta\colon \langle \delta^\dagger\rangle \to \langle \delta^* \rangle $ in the concordant semigroups $\widehat{\Gamma}$ and $\widehat{\Delta}$, respectively. Then we can easily verify that the map $\upsilon\colon(\iota,\kappa)\mapsto((\iota)\alpha,(\kappa)\beta^{-1})$ for each $(\iota,\kappa)\in  \langle (\gamma^\dagger,\delta^*) \rangle$ is a connecting isomorphism between $\langle (\gamma^\dagger,\delta^*) \rangle$ and $\langle (\gamma^*,\delta^\dagger) \rangle$. Thus the semigroup $\mathbb{S}\Omega$ is concordant.
\end{proof}

Now, given a cross-connection $\Omega=(\Gamma,\Delta;\mathcal{C},\mathcal{D})$ with the cross-connection semigroup $\mathbb{S}\Omega$, define a functor $F_\Omega\colon \mathcal{C} \to \mathbb{L}(\mathbb{S}\Omega)$ as follows. For an arbitrary $c\in v\mathcal{C}$ and a morphism $f\in \mathcal{C}(c,c')$,
\begin{align}\label{eqnfo}
\begin{split}
vF_\Omega(c)=&\mathbb{S}\Omega (\gamma(c,d),\delta(c,d)) \ \text{ for some }d \in M\Delta(c);\\
F_\Omega(f)=\rho(&(\gamma(c,d),\delta(c,d)),(\gamma(c,d)\ast f^\circ,\delta),(\gamma(c',d'),\delta(c',d')))
\end{split}
\end{align}
where $d'\in M\Delta(c')$ and $\delta$ is any consistent cone in $\widehat{\Delta}$ with apex $d'$ such that it is linked to the cone $\gamma(c,d)\ast f^\circ$. Also, define a functor $G_\Omega\colon \mathcal{D} \to \mathbb{R}(\mathbb{S}\Omega)$ as follows. For an arbitrary $d\in v\mathcal{D}$ and a morphism $g\in \mathcal{D}(d,d')$,
\begin{align}\label{eqngo}
\begin{split}
vG_\Omega(d)=&(\gamma(c,d),\delta(c,d)) \mathbb{S}\Omega  \ \text{ for some }c \in M\Gamma(d);\\
G_\Omega(g)=\lambda(&(\gamma(c,d),\delta(c,d)),(\gamma,\delta(c,d)\ast g^\circ),(\gamma(c',d'),\delta(c',d')))
\end{split}
\end{align}
where $c'\in M\Gamma(d')$ and $\gamma$ is any consistent cone in $\widehat{\Gamma}$ with apex $c'$ such that it is linked to the cone $\delta(c,d)\ast g^\circ$. Then, emulating the proof of \cite[Theorem IV.35]{cross}, we can show that $F_\Omega$ and $G_\Omega$ are consistent category isomorphisms. Hence we have the following theorem.
\begin{thm}
For a cross-connection $\Omega=(\Gamma,\Delta;\mathcal{C},\mathcal{D})$ with the cross-connection semigroup $\mathbb{S}\Omega$, the consistent categories $\mathbb{L}(\mathbb{S}\Omega)$ and $\mathbb{R}(\mathbb{S}\Omega)$ are isomorphic to the categories $\mathcal{C}$  and $\mathcal{D}$, respectively.
\end{thm}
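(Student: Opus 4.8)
The plan is to exhibit the functors $F_\Omega\colon\mathcal{C}\to\mathbb{L}(\mathbb{S}\Omega)$ and $G_\Omega\colon\mathcal{D}\to\mathbb{R}(\mathbb{S}\Omega)$ of (\ref{eqnfo}) and (\ref{eqngo}) as isomorphisms of consistent categories, following the pattern of \cite[Theorem IV.35]{cross}; by the symmetry between $\Gamma$ and $\Delta$ (and between $\mathbb{L}$ and $\mathbb{R}$) it suffices to treat $F_\Omega$. Since a functor between categories with subobjects that is inclusion preserving, fully faithful and bijective on objects is automatically an isomorphism of categories with subobjects, and both sides here are consistent categories, the work reduces to verifying these four properties for $F_\Omega$.

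First I would establish well-definedness. The object value $vF_\Omega(c)=\mathbb{S}\Omega(\gamma(c,d),\delta(c,d))$ is independent of the choice of $d\in M\Delta(c)$: any two such pairs $(c,d)$, $(c,d_1)$ give idempotents of $\widehat{\Gamma}$ with equal $H$-functor and common vertex $c$, hence $\mathscr{L}$-related by Proposition \ref{progreen}(1), and then by Lemma \ref{lemesg} the corresponding idempotents of $\mathbb{S}\Omega$ are $\mathscr{L}$-related, so they generate the same principal left ideal. For the morphism value, $\gamma(c,d)\ast f^\circ\in\widehat{\Gamma}$ by Lemma \ref{lemug}, and Theorem \ref{thmnatiso} supplies, for each $d'\in M\Delta(c')$, the uniquely linked cone $\delta=\chi(c',d')(\gamma(c,d)\ast f^\circ)\in\widehat{\Delta}$ with vertex $d'$; thus $(\gamma(c,d)\ast f^\circ,\delta)$ is a genuine element of $\mathbb{S}\Omega$ lying in $(\gamma(c,d),\delta(c,d))\,\mathbb{S}\Omega\,(\gamma(c',d'),\delta(c',d'))$, so $F_\Omega(f)$ really is a morphism of $\mathbb{L}(\mathbb{S}\Omega)$. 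Independence of the remaining choices ($d$, $d'$, $\delta$) follows from the identification of equal $\rho$-morphisms recorded in Section \ref{secconc}: a morphism $\rho(e,w,e')$ depends only on the $\mathscr{L}$-classes of $e,e'$ and on $ew$.

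Next I would check functoriality, inclusion preservation and bijectivity on objects. Functoriality: $F_\Omega(1_c)=\rho(e,e,e)=1_{vF_\Omega(c)}$ with $e=(\gamma(c,d),\delta(c,d))$, while preservation of composites reduces, via the cone multiplication (\ref{eqnbin}) and the relation between $(fg)^\circ$ and $f^\circ,g^\circ$, to the same computation as in the regular case. Inclusion preservation: if $c\subseteq c'$ then, by the description of the biorder quasi-orders on $E_\Omega=E(\mathbb{S}\Omega)$ given just before Theorem \ref{thmcxncon}, $(\gamma(c,d),\delta(c,d))\,\omega^l\,(\gamma(c',d'),\delta(c',d'))$, so $F_\Omega(j_c^{c'})$ is the corresponding inclusion. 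Bijectivity on objects: by Lemma \ref{lemesg} every object of $\mathbb{L}(\mathbb{S}\Omega)$ is $\mathbb{S}\Omega\,(\gamma(c,d),\delta(c,d))$ for some $(c,d)\in E_\Omega$, hence equals $vF_\Omega(c)$, and injectivity on objects comes from the uniqueness in Lemma \ref{lemmset} together with (\ref{eqngcd}).

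Fullness and faithfulness form the substantive step, and I expect this to be the main obstacle. Here one uses the explicit description $\Gamma(c,d)=\{\gamma(c'',d)\ast f^\circ:c''\in M\Gamma(d),\ f\in\mathcal{C}(c'',c)\}$ recorded before Theorem \ref{thmnatiso}, together with Proposition \ref{progreen}, to see that each element of $e\,\mathbb{S}\Omega\,e'$ (with $e,e'$ the idempotents attached to $c,c'$) is of the form $(\gamma(c,d)\ast f^\circ,\delta)$ for a unique $f\in\mathcal{C}(c,c')$ and the forced linked $\delta$, giving a bijection $\mathcal{C}(c,c')\to\mathbb{L}(\mathbb{S}\Omega)(vF_\Omega(c),vF_\Omega(c'))$, $f\mapsto F_\Omega(f)$. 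The subtlety absent in Nambooripad's regular setting is that $f^\circ$ is only the epimorphic component of a consistent factorisation, a retraction followed by a bimorphism rather than an isomorphism, so one must check that the consistent-factorisation calculus of Section \ref{seccons} is compatible with the biorder structure of $E_\Omega$ and with the natural isomorphism $\chi$, and in particular that the linked cone $\delta$ of prescribed vertex $d'$ is genuinely unique (so that $F_\Omega(f)$ is well-defined) and that distinct morphisms $f$ yield distinct $\rho$-morphisms. Once this bookkeeping is complete, $F_\Omega$ is an isomorphism of consistent categories, and the dual argument gives $G_\Omega\colon\mathcal{D}\to\mathbb{R}(\mathbb{S}\Omega)$, completing the proof.
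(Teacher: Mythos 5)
Your proposal follows essentially the same route as the paper: the paper simply defines $F_\Omega$ and $G_\Omega$ by (\ref{eqnfo}) and (\ref{eqngo}) and asserts that they are isomorphisms of consistent categories by emulating the proof of \cite[Theorem IV.35]{cross}, which is exactly the verification (well-definedness, functoriality, inclusion preservation, bijectivity on objects, fullness and faithfulness, plus the duality for $G_\Omega$) that you sketch. Your additional attention to the points where the consistent setting differs from the normal one (epimorphic components of consistent rather than normal factorisations, and the choice of the linked cone $\delta$) is precisely the bookkeeping the paper leaves implicit, so there is no gap to report.
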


\section{Category equivalence}\label{seccat}  

In Section \ref{secdual}, we have seen how a concordant semigroup gives rise to a cross-connection and in Section \ref{seccxn}, we have constructed the concordant semigroup which arises from an abstract cross-connection of consistent categories. In this section, we proceed to extend this correspondence to a category equivalence between the category $\mathbf{CS}$ of concordant semigroups and the category $\mathbf{CC}$ of cross-connections of consistent categories. For this end, first we introduce morphisms in the category $\mathbf{CC}$ of cross-connections.

\begin{dfn}
Let $\Omega=(\Gamma,\Delta;\mathcal{C},\mathcal{D})$ and $\Omega'=(\Gamma',\Delta';\mathcal{C}',\mathcal{D}')$ be two cross-connections with biordered sets $E_\Omega$ and $E_{\Omega'}$, respectively. A \emph{CC-morphism} $m\colon \Omega\to \Omega'$ is a pair $m=(F_m, G_m)$ of functors $F_m\colon\mathcal{C}\to \mathcal{C}'$ and $G_m\colon\mathcal{D}\to \mathcal{D}'$ which satisfies the following axioms:
\begin{enumerate}
	\item [(M1)] The functors $F_m$ and $G_m$ preserve inclusions and bimorphisms.
	\item [(M2)] If $(c,d)\in E_\Omega$, then $(F_m(c),G_m(d))\in E_{\Omega'}$ and $$F_m(\gamma(c,d)(c'))=\gamma(F_m(c),G_m(d))(F_m(c'))\ \text{ for all }c'\in v\mathcal{C}.$$ 
    \item [(M3)] If $f^\ddagger\colon d' \to d$ is the transpose of $f\colon c\to c'$, then $G_m(f^\ddagger)=(F_m(f))^\ddagger$. 
\end{enumerate}
\end{dfn}

Given a CC-morphism $m=(F_m, G_m)\colon \Omega\to\Omega'$, for an arbitrary element $(\gamma,\delta)\in \mathbb{S}\Omega$ such that $\gamma=\gamma(c',d)\ast u\in \Gamma(c,d)$ for a bimorphism $u$ and $\delta=\chi(c,d)(\gamma)=\delta(c,d')\ast u^\ddagger\in \Delta(c,d)$, define a mapping $\mathbb{S}m\colon \mathbb{S}\Omega\to \mathbb{S}{\Omega'}$ as follows:
\begin{equation}\label{eqnsm}
((\gamma,\delta))\mathbb{S}m=(\gamma(F_m(c'),G_m(d))\ast F_m(u),\delta(F_m(c),G_m(d'))\ast G_m(u^\ddagger)).
\end{equation}

\begin{thm}\label{thmgdhom}
$\mathbb{S}m\colon \mathbb{S}\Omega\to \mathbb{S}{\Omega'}$ as defined above is a good homomorphism.
\end{thm}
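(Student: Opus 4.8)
The plan is to verify the three defining properties of a good homomorphism: (a) $\mathbb{S}m$ is a well-defined function from $\mathbb{S}\Omega$ to $\mathbb{S}\Omega'$, (b) $\mathbb{S}m$ is a semigroup homomorphism, and (c) $\mathbb{S}m$ preserves the relations $\mathscr{L}^*$ and $\mathscr{R}^*$. Throughout I would lean heavily on the analogous argument for normal categories in \cite[Section V.2]{cross} and on the structural descriptions already established: Lemma~\ref{lemug} and Lemma~\ref{lemeug} for the shape of cones in $\widehat{\Gamma}$ and $\widehat{\Delta}$; Theorem~\ref{thmnatiso} for the natural isomorphism $\chi$; and the $\mathscr{R}^*$/$\mathscr{L}^*$ computations in Lemma~\ref{lemab} and Proposition~\ref{prog}.

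First I would address well-definedness. An element $(\gamma,\delta)\in\mathbb{S}\Omega$ may be written as $\gamma=\gamma(c',d)\ast u$ in more than one way (different choices of $c'$, $d$, $u$), so I must check that the right-hand side of (\ref{eqnsm}) is independent of this representation, and secondly that the resulting pair actually lies in $\mathbb{S}\Omega'$, i.e.\ that $\gamma(F_m(c'),G_m(d))\ast F_m(u)$ and $\delta(F_m(c),G_m(d'))\ast G_m(u^\ddagger)$ are linked in $\Omega'$. Independence of representation follows from (M1) (which guarantees $F_m(u)$ is again a bimorphism and $F_m$ respects inclusions, so the retraction-bimorphism decomposition is transported faithfully) together with (M2), which says $F_m$ carries the component functions of $\gamma(c,d)$ to those of $\gamma(F_m(c),G_m(d))$; these two facts pin down $\gamma(F_m(c'),G_m(d))\ast F_m(u)$ uniquely. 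That the image pair is linked is where (M3) enters: since $\delta=\chi(c,d)(\gamma)$ is obtained by transposing $u$, and (M3) says $G_m(u^\ddagger)=(F_m(u))^\ddagger$, the image cone $\delta(F_m(c),G_m(d'))\ast G_m(u^\ddagger)$ is exactly $\chi'(F_m(c),G_m(d))$ applied to $\gamma(F_m(c'),G_m(d))\ast F_m(u)$; here one invokes the explicit form of $\chi$ from Theorem~\ref{thmnatiso} and the compatibility of transposes with $\chi$. I expect this linkedness check, mediated through (M3) and the behaviour of $\chi$, to be the main obstacle — the rest is bookkeeping.

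Next, the homomorphism property. Given $(\gamma_1,\delta_1),(\gamma_2,\delta_2)\in\mathbb{S}\Omega$, I would compute $(\gamma_1\cdot\gamma_2,\delta_2\cdot\delta_1)\mathbb{S}m$ using the binary operation (\ref{eqnbin}): writing $\gamma_i=\epsilon_i\ast u_i$ as in Lemma~\ref{lemug}, the product $\gamma_1\cdot\gamma_2$ reduces (exactly as in the proof of the Proposition on closure of $\widehat{\mathcal{C}}$) to $\epsilon_1\ast(u_1\,\epsilon_2(c_1)\,u_2)^\circ$; applying $F_m$ and using that $F_m$ is a functor preserving inclusions, bimorphisms, and idempotent-cone components (M1), (M2), one sees that $F_m$ intertwines the composition $\cdot$ in $\widehat{\Gamma}$ with that in $\widehat{\Gamma'}$ — essentially because $F_m$ commutes with the ``$\ast$'' and ``$(-)^\circ$'' constructions, the epimorphic component of $F_m(g)$ being $F_m$ of the epimorphic component of $g$. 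The $\mathcal{D}$-coordinate is handled identically with $G_m$ (noting the order reversal in the second coordinate matches the order reversal in $\circ$). Hence $\mathbb{S}m$ is a homomorphism.

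Finally, preservation of the generalised Green relations. By Theorem~\ref{thmcxncon} we have the explicit witnesses $(\gamma^\dagger,\delta^*)\,\mathscr{R}^*\,(\gamma,\delta)\,\mathscr{L}^*\,(\gamma^*,\delta^\dagger)$ with $\gamma^\dagger=\gamma(c,d)$, $\gamma^*=\gamma(c',d')$, etc. Since $\mathscr{L}^*$ and $\mathscr{R}^*$ can be read off from idempotents in an abundant semigroup, it suffices to check that $\mathbb{S}m$ sends these idempotent witnesses of $(\gamma,\delta)$ to idempotent witnesses of $((\gamma,\delta))\mathbb{S}m$. But $(\gamma(c,d),\delta(c,d))\mathbb{S}m=(\gamma(F_m(c),G_m(d)),\delta(F_m(c),G_m(d)))$ by (\ref{eqnsm}) (taking $u=1_c$), which by Lemma~\ref{lemesg} is precisely the idempotent of $\mathbb{S}\Omega'$ attached to $(F_m(c),G_m(d))\in E_{\Omega'}$; and the image pair $\mathbb{S}m(\gamma,\delta)$ has exactly this idempotent as its $\gamma^\dagger$-component and the one at $(F_m(c'),G_m(d'))$ as its $\gamma^*$-component, using that $F_m$ preserves the retraction-bimorphism-inclusion data. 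Therefore $\mathbb{S}m$ carries the canonical $\mathscr{R}^*$- and $\mathscr{L}^*$-witnesses of an element to those of its image, which gives $a\,\mathscr{R}^*\,b\Rightarrow a\,\mathbb{S}m\,\mathscr{R}^*\,b\,\mathbb{S}m$ and dually for $\mathscr{L}^*$. This completes the verification that $\mathbb{S}m$ is a good homomorphism.
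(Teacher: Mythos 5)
Your proposal is correct and follows essentially the same route as the paper: the homomorphism property is obtained by transporting the $\ast$ and $(-)^\circ$ constructions through $F_m$ and $G_m$ (as in Nambooripad's Theorem V.11 argument, to which the paper defers), and goodness is obtained by checking that the canonical idempotent witnesses $(\gamma^\dagger,\delta^*)$ and $(\gamma^*,\delta^\dagger)$ from Theorem \ref{thmcxncon} are carried by $\mathbb{S}m$ to witnesses of the image, using that $F_m(u)$ and $G_m(u^\ddagger)$ remain bimorphisms by (M1). Your additional well-definedness and linkedness check via (M3) and $\chi$ is a sensible elaboration of what the paper leaves implicit, not a different method.
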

\begin{proof}
First, exactly as shown in the proof of \cite[Theorem V.11]{cross}, we can show that $\mathbb{S}m$ is a homomorphism of semigroups $\mathbb{S}\Omega$ and $\mathbb{S}{\Omega'}$ such that $\mathbb{S}m$ is injective [surjective] if and only if $m$ is injective [surjective]. Further, let $(\gamma,\delta) \in \mathbb{S}\Omega$ such that $\gamma=\gamma(c,d)\ast u$ for a bimorphism $u\colon c \to c'$ and $\delta=\chi(c',d)(\gamma)$. Then as in the proof of Theorem \ref{thmcxncon}, we can find  idempotents $(\gamma^\dagger,\delta^*), (\gamma^*,\delta^\dagger)\in \mathbb{S}\Omega$ such that $(\gamma^\dagger,\delta^*) \ \mathrel{\mathscr{R}}^* \ (\gamma,\delta) \ \mathrel{\mathscr{L}}^* \ (\gamma^*,\delta^\dagger)$. Then since $F_m$ and $G_m$ preserve bimorphisms, we have $F_m(u)$ and $G_m(u^\ddagger)$ are bimorphisms. Then we can easily verify that $((\gamma^\dagger,\delta^*) )\mathbb{S}m\ \mathrel{\mathscr{R}}^* \ ((\gamma,\delta))\mathbb{S}m \ \mathrel{\mathscr{L}}^* \ ((\gamma^*,\delta^\dagger))\mathbb{S}m$.
\end{proof}

\begin{thm}\label{funs}
Further, the assignments
\begin{equation*}
v\mathbb{S} \colon \Omega \mapsto  \mathbb{S}\Omega \quad \quad  \mathbb{S}\colon m\mapsto \mathbb{S}m
\end{equation*}
is a functor $\mathbb{S}\colon \mathbf{CC} \to \mathbf{CS}$ from the category $\mathbf{CC}$ of cross-connections of consistent categories to the category $\mathbf{CS}$ of concordant semigroups.
\end{thm}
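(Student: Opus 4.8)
The plan is to verify the two functoriality axioms for $\mathbb{S}$: that it preserves identities and respects composition, with all the substantive content (that each $\mathbb{S}m$ is a well-defined good homomorphism) already supplied by Theorem \ref{thmgdhom}. So the remaining work is bookkeeping on the assignment $m\mapsto\mathbb{S}m$ defined by (\ref{eqnsm}).

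First I would check that $\mathbb{S}$ sends the identity CC-morphism $1_\Omega=(1_{\mathcal{C}},1_{\mathcal{D}})$ to the identity homomorphism on $\mathbb{S}\Omega$. For an arbitrary $(\gamma,\delta)\in\mathbb{S}\Omega$ with $\gamma=\gamma(c',d)\ast u\in\Gamma(c,d)$ and $\delta=\delta(c,d')\ast u^\ddagger$, formula (\ref{eqnsm}) with $F_m=1_{\mathcal{C}}$, $G_m=1_{\mathcal{D}}$ gives $((\gamma,\delta))\mathbb{S}1_\Omega=(\gamma(c',d)\ast u,\delta(c,d')\ast u^\ddagger)=(\gamma,\delta)$, since the identity functors fix all objects and morphisms, and the idempotent cones $\gamma(c',d)$, $\delta(c,d')$ are determined by their vertices and $H$-functors which are unchanged. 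Hence $\mathbb{S}1_\Omega=1_{\mathbb{S}\Omega}$.

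Next I would verify $\mathbb{S}(m\circ n)=\mathbb{S}m\circ\mathbb{S}n$ for composable CC-morphisms $n\colon\Omega\to\Omega'$ and $m\colon\Omega'\to\Omega''$, where $m\circ n=(F_m\circ F_n, G_m\circ G_n)$. Take $(\gamma,\delta)\in\mathbb{S}\Omega$ with $\gamma=\gamma(c',d)\ast u$ and $\delta=\delta(c,d')\ast u^\ddagger$. Applying $\mathbb{S}n$ first yields a linked pair whose first component is $\gamma(F_n(c'),G_n(d))\ast F_n(u)$; to then apply $\mathbb{S}m$ via (\ref{eqnsm}), one must recognise this cone as being of the standard form $\gamma(c_1',d_1)\ast u_1$ with $u_1=F_n(u)$ a bimorphism (which holds by (M1) for $n$) and the base idempotent cone $\gamma(F_n(c'),G_n(d))$ having the appropriate vertex and $H$-functor inside $\Omega'$. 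The second application then produces $\gamma(F_m(F_n(c')),G_m(G_n(d)))\ast F_m(F_n(u))$, and the matching computation on the $\widehat{\Delta}$-side uses (M3), i.e.\ $(F_n(u))^\ddagger=G_n(u^\ddagger)$, so that the transpose bookkeeping composes correctly; the upshot is exactly $((\gamma,\delta))\mathbb{S}(m\circ n)$. The one point needing care is that the idempotent cone appearing after the first step really is $\gamma(F_n(c'),G_n(d))$ in the sense of (\ref{eqngcd})---this is precisely axiom (M2) for $n$, which guarantees $(F_n(c'),G_n(d))\in E_{\Omega'}$ and pins down the idempotent cone by its vertex and by the preservation of the $H$-functor under $F_n$, so that (\ref{eqnsm}) for $m$ applies verbatim. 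Consequently $\mathbb{S}$ respects composition.

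The main obstacle, such as it is, is the second step: one must be confident that after applying $\mathbb{S}n$ the resulting element of $\mathbb{S}\Omega'$ is presented in exactly the normal form $\gamma=\gamma(c_1',d_1)\ast u_1$ that (\ref{eqnsm}) presupposes for its input, so that the formula may legitimately be applied a second time. This is guaranteed by Lemma \ref{lemug} together with axioms (M1) and (M2) for $n$, and the matching of transposes across the composition is exactly what (M3) delivers; once these are invoked the identity $\mathbb{S}(m\circ n)=\mathbb{S}m\circ\mathbb{S}n$ follows by direct substitution. Since $\mathbb{S}m$ is a good homomorphism by Theorem \ref{thmgdhom} and $v\mathbb{S}\colon\Omega\mapsto\mathbb{S}\Omega$ lands in $\mathbf{CS}$ by Theorem \ref{thmcxncon}, we conclude that $\mathbb{S}\colon\mathbf{CC}\to\mathbf{CS}$ is a functor.
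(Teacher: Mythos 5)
Your verification is correct and is essentially the argument the paper has in mind: the paper omits the proof of this theorem as a ``straightforward generalisation'' of \cite[Theorem V.13]{cross}, and that omitted argument is precisely the routine functoriality check you carry out, with the substantive inputs being Theorem \ref{thmcxncon} (so that $\mathbb{S}\Omega$ lies in $\mathbf{CS}$), Theorem \ref{thmgdhom} (so that $\mathbb{S}m$ is a good homomorphism), and axioms (M1)--(M3) together with Lemma \ref{lemug} to justify re-applying (\ref{eqnsm}) to the image of a linked pair. Your identification of the one delicate point---that after applying $\mathbb{S}n$ the image is again in the normal form $\gamma(F_n(c'),G_n(d))\ast F_n(u)$ with matching transpose data, as guaranteed by (M2) and (M3)---is exactly where the content of the verification lies.
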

The proof is a straightforward generalisation of the proof of \cite[Theorem V.13]{cross} and hence we omit it.

\begin{thm}\label{thmccmor}
Given a good homomorphism $h\colon S\to S'$ of concordant semigroups, define functors $F_h\colon \mathbb{L}(S)\to\mathbb{L}(S')$ and $G_h\colon \mathbb{R}(S)\to\mathbb{R}(S')$ as follows:
\begin{align}\label{eqnfhgh}
\begin{split}
vF_h(Se)=S'(eh), &\quad F_h(\rho(e,u,f)) = \rho (eh,uh,fh),\\
vG_h(eS)=(eh)S' \quad &\text{ and } \quad G_h(\lambda(e,u,f)) = \lambda (eh,uh,fh).
\end{split}
\end{align}
Then the pair of functors $\Omega h =(F_h,G_h)$ is a CC-morphism between the cross-connections $\Omega S = (\mathbb{L}(S),\mathbb{R}(S);\Gamma_S,\Delta_S)$ and $\Omega S'=(\mathbb{L}(S'),\mathbb{R}(S');\Gamma_{S'},\Delta_{S'})$.	
\end{thm}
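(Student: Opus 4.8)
The plan is to verify the three CC-morphism axioms (M1), (M2), (M3) for the pair $(F_h, G_h)$, after first checking that $F_h$ and $G_h$ are well-defined functors. For well-definedness, I would first recall that $h$ is a good homomorphism, so $a\,\mathscr{L}^*b$ implies $ah\,\mathscr{L}^*bh$ and dually for $\mathscr{R}^*$; in particular $h$ maps $E(S)$ into $E(S')$ and respects the quasi-orders $\omega^l,\omega^r$. Using the description $\rho(e,u,f)=\rho(g,v,h')$ iff $e\mathscr{L}g$, $f\mathscr{L}h'$ and $v=gu$ from Section~\ref{secconc}, one checks that $\rho(e,u,f)=\rho(g,v,h')$ in $\mathbb{L}(S)$ forces $\rho(eh,uh,fh)=\rho(gh,vh,h'h)$ in $\mathbb{L}(S')$, so $F_h$ is a well-defined map on morphisms; compatibility with composition $\rho(e,u,f)\rho(f,v,g)=\rho(e,uv,g)$ and with identities is immediate since $h$ is multiplicative. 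The same argument dualises to $G_h$. That $F_h$ and $G_h$ preserve inclusions is clear from $F_h(\rho(e,e,f))=\rho(eh,eh,fh)$, which is the inclusion $j^{S'(fh)}_{S'(eh)}$ since $eh\,\omega^l\,fh$; preservation of bimorphisms follows from Lemma~\ref{lemmor}: $\rho(e,u,f)$ a bimorphism means $e\mathscr{L}^*u\mathscr{R}^*f$, and goodness of $h$ gives $eh\,\mathscr{L}^*\,uh\,\mathscr{R}^*\,fh$, i.e.\ $\rho(eh,uh,fh)$ is a bimorphism. This establishes (M1).

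For (M2), I would unwind the definitions of $E_{\Omega S}$, the idempotent cones $\gamma(c,d)$, and the principal cone $\rho^e$. For a concordant semigroup $S$, the objects of $\mathbb{L}(S)$ are $Se$ and those of $\mathbb{R}(S)$ are $eS$; the pair $(Se, fS)\in E_{\Omega S}$ precisely when $Se\in M\Gamma_S(fS)$, which by the explicit form of $\Gamma_S$ in~(\ref{eqngs}) and the M-set description translates (via Proposition~\ref{prosr} and Theorem~\ref{thmdualrs}) into a biordered-set condition on $e$ and $f$ — essentially that $\rho^e$ and $\lambda^f$ have a common ``support'', matching the sandwich-set picture of $E(S)$. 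I would check directly that if $(Se,fS)\in E_{\Omega S}$ then $(S'(eh), (fh)S')\in E_{\Omega S'}$, using that $h$ restricted to $E(S)$ is a homomorphism of regular biordered sets (this is where goodness and the regularity of $E(S)$ are used together). The cone-identity in (M2) then reduces to comparing $F_h(\gamma(Se,fS)(Sg)) = F_h$ applied to a component of the idempotent cone $\gamma(Se,fS)$ — which, after identifying $\gamma(Se,fS)$ with (a cone closely related to) the principal cone $\rho^{e'}$ for the appropriate idempotent $e'$ in the $\mathscr{R}$-class determined by the pair — with $\gamma(S'(eh),(fh)S')(S'(gh))$, and this comes down to the naturality/compatibility of $\rho^{a}$ under $h$, namely $F_h(\rho^a(Se)) = \rho^{ah}(S'(eh))$, which is checked from the defining formula~(\ref{eqnprinc}) for $\rho^a$ and the multiplicativity of $h$.

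For (M3), I would use the characterisation of the transpose $f^\ddagger$ via the Yoneda correspondence and the natural isomorphisms $\eta$: $f^\ddagger$ is the unique morphism with $\mathbb{R}(S)(f^\ddagger,-)=\eta_{\delta(\cdot)}^{-1}\circ\Delta_S(f)\circ\eta_{\delta(\cdot)}$. Since $F_h,G_h$ are compatible with $\Gamma_S,\Delta_S$ (which follows from their compatibility with the principal cones $\rho^a$, $\lambda^a$, established in the course of (M2), together with the explicit formulas~(\ref{eqngs}),~(\ref{eqnds})), the defining equation of the transpose is preserved, and the uniqueness in the Yoneda lemma forces $G_h(f^\ddagger)=(F_h(f))^\ddagger$.

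The main obstacle I anticipate is (M2): one must carefully track how the abstract idempotent cones $\gamma(c,d)$, $\delta(c,d)$ — defined via representability and Lemma~\ref{lemmset} — correspond, in the concrete cross-connection $\Omega S$, to the principal cones $\rho^e$, $\lambda^e$ and to the biordered-set structure of $E(S)$, and then to see that a good homomorphism, being a morphism of regular biordered sets on idempotents and multiplicative on the whole semigroup, transports all of this data coherently. The rest of the verification is routine bookkeeping with the formulas for $\mathbb{L}(S)$-morphisms and is parallel to the regular case treated in \cite[Section~V]{cross}, to which I would appeal for the analogous computations.
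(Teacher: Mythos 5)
Your proposal is correct and takes essentially the same route as the paper, which gives no independent argument but simply declares the verification routine and defers to the regular-case computation of \cite[Theorem V.14]{cross}: one checks functoriality and the axioms (M1)--(M3) directly from the explicit formulas, with goodness of $h$ supplying exactly the preservation of $\mathscr{L}^*$ and $\mathscr{R}^*$ (hence of bimorphisms and of the principal-cone compatibility $F_h(\rho^a(Se))=\rho^{ah}(S'(eh))$) that the concordant setting adds to Nambooripad's argument. Your outline isolates precisely these points, so it matches the intended proof.
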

The proof of \cite[Theorem V.14]{cross} gives the routine verification of the above theorem and further describes a  functor from the category $\mathbf{CS}$ of concordant semigroups to the category $\mathbf{CC}$ of cross-connections of consistent categories.

\begin{thm}\label{func}
The assignments
\begin{equation*}
v\mathbb{C} \colon S\mapsto \Omega S \quad \quad  \mathbb{C}\colon h\mapsto \Omega S
\end{equation*}
define a functor $\mathbb{C}\colon \mathbf{CS} \to \mathbf{CC}$.
\end{thm}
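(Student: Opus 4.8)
The plan is to verify the two functor axioms for $\mathbb{C}\colon \mathbf{CS}\to\mathbf{CC}$: namely that $\mathbb{C}$ sends identity morphisms to identity morphisms, and that it respects composition. Everything else — the well-definedness of $\Omega S$ as a cross-connection and of $\Omega h$ as a CC-morphism — has already been established in Theorems \ref{thmcxns} and \ref{thmccmor} respectively, so the present theorem is genuinely only the bookkeeping that packages those two constructions into a functor. The subtlety, such as it is, lies in the fact that both the object-assignment $v\mathbb{C}$ and the morphism-assignment $\mathbb{C}$ are built out of \emph{pairs} of functors $(\mathbb{L}(-),\mathbb{R}(-))$ and $(F_h,G_h)$, so each verification splits into an $\mathbb{L}$-component and an $\mathbb{R}$-component; by the left--right duality running through the paper (Remark \ref{rmkdual}), it suffices to check the $\mathbb{L}$-component and invoke duality for the $\mathbb{R}$-component.

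First I would check functoriality on identities. For the identity good homomorphism $1_S\colon S\to S$, formula (\ref{eqnfhgh}) gives $vF_{1_S}(Se)=S(e1_S)=Se$ and $F_{1_S}(\rho(e,u,f))=\rho(e1_S,u1_S,f1_S)=\rho(e,u,f)$, so $F_{1_S}=1_{\mathbb{L}(S)}$; dually $G_{1_S}=1_{\mathbb{R}(S)}$. Hence $\Omega(1_S)=(F_{1_S},G_{1_S})=(1_{\mathbb{L}(S)},1_{\mathbb{R}(S)})=1_{\Omega S}$, the identity CC-morphism on $\Omega S$.

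Next I would check compatibility with composition. Let $h\colon S\to S'$ and $k\colon S'\to S''$ be good homomorphisms; since a composite of good homomorphisms is again good, $hk$ is a morphism in $\mathbf{CS}$. We must show $F_{hk}=F_h F_k$ and $G_{hk}=G_h G_k$ as functors. On objects, $vF_{hk}(Se)=S''(e(hk))=S''((eh)k)=vF_k(S'(eh))=vF_k(vF_h(Se))$, and on morphisms, $F_{hk}(\rho(e,u,f))=\rho(e(hk),u(hk),f(hk))=\rho((eh)k,(uh)k,(fh)k)=F_k(\rho(eh,uh,fh))=F_k(F_h(\rho(e,u,f)))$; this uses only that $h$ and $k$ are semigroup homomorphisms, together with the well-definedness of $\mathbb{L}$-morphisms recorded in Section \ref{secconc} (so that the representative chosen for the source morphism does not matter). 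The $\mathbb{R}$-component $G_{hk}=G_h G_k$ follows dually. Therefore $\Omega(hk)=(F_{hk},G_{hk})=(F_h F_k,G_h G_k)=(F_h,G_h)\circ(F_k,G_k)=\Omega h\circ\Omega k$ in $\mathbf{CC}$, where composition of CC-morphisms is componentwise composition of functors.

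The main obstacle — and it is a mild one — is not any of these calculations but rather confirming that composition in $\mathbf{CC}$ really is the componentwise composition of the two functor-components, and that $(F_h F_k, G_h G_k)$ again satisfies the CC-morphism axioms (M1)--(M3); the latter is handled once and for all by Theorem \ref{thmccmor} applied to the composite good homomorphism $hk$, which produces $\Omega(hk)$, and one then only needs the identities above to recognize $\Omega(hk)$ as $\Omega h\circ\Omega k$. Since the cited \cite[Theorem V.14]{cross} carries out exactly this routine verification in the regular case and our formulas (\ref{eqnfhgh}) are the verbatim generalisation, I would simply note that the argument transfers without change and omit the remaining details.
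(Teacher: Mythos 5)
Your proposal is correct and matches the paper's intent: the paper offers no separate argument for this theorem, dismissing it as the routine verification contained in the cited proof of \cite[Theorem V.14]{cross}, and your check of the two functor axioms (identities and composition, done componentwise on $(F_h,G_h)$ via (\ref{eqnfhgh}), with well-definedness of $\Omega S$ and $\Omega h$ delegated to Theorems \ref{thmcxns} and \ref{thmccmor}) is exactly that verification spelled out.
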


Thus we have built two functors $\mathbb{S}\colon \mathbf{CC} \to \mathbf{CS}$ and $\mathbb{C}\colon \mathbf{CS} \to \mathbf{CC}$ between the categories of cross-connections and concordant semigroups. Now, we proceed to prove an adjoint equivalence between the categories using these functors. 

For this end, we require the following proposition whose proof carries over to the more general class of  \emph{weakly $U$-abundant} (also called $U$-semiabundant) semigroups \cite{wangureg,lawsonordered0}. The proof is due to Victoria Gould (in a personal communication) and this may be helpful in the future generalisations of this article.

\begin{pro}\label{prowr}
An abundant semigroup is weakly reductive.
\end{pro}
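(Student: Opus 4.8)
The plan is to show that an abundant semigroup $S$ is weakly reductive, i.e., that for $a,b\in S$, if $xa=xb$ for all $x\in S$ and $ay=by$ for all $y\in S$, then $a=b$. I would start from the two-sided condition and use the abundance of $S$ to feed in idempotents that act as one-sided identities. Concretely, since $S$ is abundant, pick $a^\dagger\in R_a^*(S)\cap E(S)$ and $a^*\in L_a^*(S)\cap E(S)$, and likewise $b^\dagger, b^*$ for $b$. By Lemma \ref{lemconid} (and its dual), $a^\dagger a=a=aa^*$ and $b^\dagger b=b=bb^*$, and moreover $a^\dagger$ acts as a left identity "up to $\mathscr{R}^*$" on $a$ while $a^*$ acts as a right identity on $a$ within its $\mathscr{L}^*$-class.

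The key step is to exploit the hypothesis $xa=xb$ with $x$ chosen among these idempotents. Taking $x=a^\dagger$ gives $a=a^\dagger a=a^\dagger b$, and taking $x=b^\dagger$ gives $b^\dagger a=b^\dagger b=b$. I would then combine these: from $a=a^\dagger b$ we get, for any $y$, $ay=a^\dagger(by)$, and the two-sided hypothesis $ay=by$ together with $a^\dagger$-manipulations should force the $\mathscr{R}^*$-classes (equivalently the left-annihilator behaviour) of $a$ and $b$ to coincide; dually, using $ay=by$ with $y=a^*$ and $y=b^*$ forces agreement on the right. The cleanest route is probably: from $xa=xb$ for all $x$, deduce via Lemma \ref{lemconid} applied to $a^\dagger$ (resp. $b^\dagger$) that $a\,\mathscr{L}^*\,?$ — rather, deduce that $a$ and $b$ have the same right-annihilator congruence, hence by the dual of the cited Lemma 1.7/Section X.1.6 that $a\,\mathscr{R}^*\,b$ — no wait, one must be careful about which relation one gets. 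Let me instead organize it as: (i) $a=a^\dagger b$ and $b=b^\dagger a$ from the left hypothesis; (ii) $a=ab^*$ and $b=ba^*$ from the right hypothesis; (iii) substitute (ii) into (i): $a=a^\dagger b=a^\dagger(ba^*)=(a^\dagger b)a^*=aa^*=a$, which is only a consistency check, so one genuinely needs the $\mathscr{L}^*/\mathscr{R}^*$ characterization, not just the idempotent identities.

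So the actual argument I would write: using $xa=xb$ for all $x\in S^1$ and $ay=by$ for all $y\in S^1$, first note $a=a^\dagger b$ and $b=b^\dagger a$. Now for arbitrary $x,y\in S^1$, $axy$-type manipulations plus Lemma \ref{lemconid} show $a^\dagger$ and $b^\dagger$ are $\mathscr{R}^*$-related idempotents that both satisfy $e a = a$ (check: $a^\dagger a=a$; and $b^\dagger a = b^\dagger b = b$, hmm, that gives $b^\dagger a=b\ne a$ a priori). The honest fix is to observe that $a$ and $b$ determine the same principal-type data: the hypothesis $ay=by\ (\forall y)$ together with abundance pins down $a^*\,\mathscr{L}\,b^*$ (via Lemma \ref{lemconid}(ii): $ay=ay'\iff a^*y=a^*y'$, and $by=by'\iff b^*y=b^*y'$, and since $ay=by$ always, the congruences agree, so $a^*$ and $b^*$ have the same right-congruence, whence $a^*\,\mathscr{L}\,b^*$ among idempotents, giving $a^*b^*=a^*$, $b^*a^*=b^*$), and dually $xa=xb\ (\forall x)$ pins down $a^\dagger\,\mathscr{R}\,b^\dagger$. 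Then $a=a^\dagger b$ and $b=b^\dagger a$, combined with $a^\dagger\,\mathscr{R}\,b^\dagger$ (so $a^\dagger b^\dagger=b^\dagger$, $b^\dagger a^\dagger=a^\dagger$), yield $a=a^\dagger b=a^\dagger(b^\dagger a)=(a^\dagger b^\dagger)a=b^\dagger a=b$. That closes it.

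\textbf{Main obstacle.} The delicate point is step showing $a^\dagger\,\mathscr{R}\,b^\dagger$ from $xa=xb$ for all $x$: one must pass from equality of the left-multiplication maps of $a$ and $b$ to the statement that their chosen idempotent $\mathscr{R}^*$-representatives are actually $\mathscr{R}$-related in $E(S)$, which requires carefully invoking Lemma \ref{lemconid} (dual form) to identify $a^\dagger$ as a left identity of $b$ as well — indeed $a^\dagger b=a^\dagger a=a$, not $b$, so one instead argues that $b^\dagger a=b$ and $a^\dagger b=a$ force $a^\dagger b^\dagger=b^\dagger$ and $b^\dagger a^\dagger = a^\dagger$ by a short computation with the dual of Lemma \ref{lemconid}(ii), and this is where all the care goes. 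Once the two idempotent $\mathscr{R}$- and $\mathscr{L}$-relations are in hand, the final three-line cancellation is immediate. I expect the write-up to be only a few lines, citing Lemma \ref{lemconid} twice (once dualized) and nothing deeper.
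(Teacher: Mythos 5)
Your closing cancellation $a=a^{\dagger}b=a^{\dagger}(b^{\dagger}a)=(a^{\dagger}b^{\dagger})a=b^{\dagger}a=b$ is fine once $a^{\dagger}b^{\dagger}=b^{\dagger}$ is available, but the way you propose to obtain it is a genuine gap. You claim that $xa=xb$ for all $x\in S$ alone "pins down" $a^{\dagger}\,\mathscr{R}\,b^{\dagger}$ (and dually that $ay=by$ for all $y$ gives $a^{*}\,\mathscr{L}\,b^{*}$) because "the congruences agree"; however the characterisations of $\mathscr{L}^{*}$ and $\mathscr{R}^{*}$ quantify over $S^{1}$, and for the pair involving the adjoined identity the required equivalence $xa=a\iff xb=b$ is essentially the equality $a=b$ you are trying to prove, so it does not follow from one hypothesis alone. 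Concretely, the asserted implication is false: the two-element left-zero semigroup $\{a,b\}$ is abundant (every element idempotent, $\mathscr{R}^{*}$ the equality relation), satisfies $xa=x=xb$ for all $x$, and there $a^{\dagger}=a$, $b^{\dagger}=b$ are not $\mathscr{R}$-related and $a^{\dagger}b^{\dagger}=a\neq b^{\dagger}$; the dual right-zero example refutes the claim $ay=by\ (\forall y)\Rightarrow a^{*}\,\mathscr{L}\,b^{*}$. The relations you want are true a posteriori, and your step can be repaired by threading the second hypothesis into it (e.g. $ab^{*}=bb^{*}=b$ yields $xa=a\Rightarrow xb=xa=a$ and then $xb=x(bb^{*})=(xb)b^{*}=ab^{*}=b$), but at that point you are already performing exactly the mixed two-sided computation that finishes the proof directly, so the detour through $\mathscr{R}$- and $\mathscr{L}$-comparisons of the idempotents buys nothing.

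For comparison, the paper's proof avoids that detour entirely: choosing $a^{\dagger}\,\mathscr{R}^{*}\,a\,\mathscr{L}^{*}\,a^{*}$ and $b^{\dagger}\,\mathscr{R}^{*}\,b\,\mathscr{L}^{*}\,b^{*}$, it records $a=a^{\dagger}a=a^{\dagger}b$, $a=aa^{*}=ba^{*}$, $b=b^{\dagger}b=b^{\dagger}a$, $b=bb^{*}=ab^{*}$ (each equation using one of the two hypotheses at a single idempotent), and then concludes in one line $a=ba^{*}=(b^{\dagger}a)a^{*}=b^{\dagger}(aa^{*})=b^{\dagger}a=b$. Note that this genuinely uses both hypotheses, as any proof must: the left-zero example shows $xa=xb\ (\forall x)$ alone never suffices, whereas your chain draws its two ingredients $a=a^{\dagger}b$ and $b=b^{\dagger}a$ from that single hypothesis and defers all use of $ay=by$ to precisely the step you left unjustified.
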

\begin{proof}
Recall that a semigroup $S$ is weakly reductive if the map $a\mapsto(\rho_a,\lambda_a)$ is injective where $\rho_a$ is the right regular representation of $S$ as defined in Proposition \ref{prosr} and $\lambda_a$ is the dual left regular representation. Let $S$ be an abundant semigroup. Suppose $(\rho_a,\lambda_a)=(\rho_b,\lambda_b)$ for $a,b\in S$. Since $S$ is abundant, there exists idempotents $a^\dagger,a^* \in S$ such that $a^\dagger\: \mathrel{\mathscr{R}}^* \: a \: \mathrel{\mathscr{L}}^* \: a^*$. So,  
$$a=a^\dagger a= a^\dagger \rho_a = a^\dagger \rho_b = a^\dagger b \text{ and } a=aa^*=\lambda_a a^* = \lambda_b a^*= ba^*.$$
Hence $a=a^\dagger b = b a^*$. Similarly, $b = b^\dagger a = ab^*$ for idempotents $b^\dagger,b^* \in S$ such that $b^\dagger\: \mathrel{\mathscr{R}}^* \: b \: \mathrel{\mathscr{L}}^* \: b^*$. Then 
$$a=ba^*=(b^\dagger a) a^* = b^\dagger (aa^*)=   b^\dagger a = b.$$
Hence $S$ is weakly reductive.
\end{proof}
\begin{thm}\label{thmadjcs}
For each concordant semigroup $S$, define $\varphi(S)\colon S \to \mathbb{C}\mathbb{S}(S)$ as
$$\varphi(S)\colon a \mapsto (\rho^a,\lambda^a)$$
where $\rho^a$ and $\lambda^a$ are principal cones determined by $a$ in the categories $\mathbb{L}(S)$ and $\mathbb{R}(S)$ defined by (\ref{eqnprinc}) and its dual, respectively. Then $\varphi(S)$ is an isomorphism and the assignment $S\mapsto\varphi(S)$ is a natural isomorphism between the functors $1_{\mathbf{CS}}$ and $\mathbb{C}\mathbb{S}$. 
\end{thm}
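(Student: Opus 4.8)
Since $\mathbb{C}(S)=\Omega S=(\mathbb{L}(S),\mathbb{R}(S);\Gamma_S,\Delta_S)$ and $\mathbb{S}(\Omega S)=\mathbb{S}\Omega S$, the codomain of $\varphi(S)$ is the semigroup of linked pairs $(\gamma,\delta)\in\widehat{\Gamma_S}\times\widehat{\Delta_S}$. The plan is to check in turn that $\varphi(S)$ is well defined, a homomorphism, injective and surjective, and finally that it is natural in $S$. For well-definedness, recall from the paragraph preceding (\ref{eqnprinc}) that $\rho^a=\rho^{a^\dagger}\ast u$, where $a^\dagger\in E(R^*_a)$, $f\in E(L^*_a)$ and $u=\rho(a^\dagger,a,f)$ is a bimorphism of $\mathbb{L}(S)$ by Lemma \ref{lemmor}. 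The idempotent cone $\rho^{a^\dagger}$ has vertex $Sa^\dagger$ and, by (\ref{eqngs}), satisfies $H(\rho^{a^\dagger};-)=v\Gamma_S(a^\dagger S)$; hence, by Lemma \ref{lemmset} together with (\ref{eqngcd}), $\rho^{a^\dagger}=\gamma(Sa^\dagger,a^\dagger S)$ with $(Sa^\dagger,a^\dagger S)\in E_{\Omega S}$, and Lemma \ref{lemug} yields $\rho^a\in\widehat{\Gamma_S}$ (dually $\lambda^a\in\widehat{\Delta_S}$). One then checks that $\rho^a\in\Gamma_S(Sf,a^\dagger S)$ and that $\lambda^a=\chi(Sf,a^\dagger S)(\rho^a)$, so that $(\rho^a,\lambda^a)$ is a linked pair; this is a bookkeeping computation with the transpose of $u$ and the defining squares of $\chi$ from Theorem \ref{thmnatiso}, using the explicit forms (\ref{eqngs}) and (\ref{eqnds}) of $\Gamma_S$ and $\Delta_S$.

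That $\varphi(S)$ is a homomorphism is immediate from Proposition \ref{prosr} and its dual: $a\mapsto\rho^a$ is a homomorphism and $a\mapsto\lambda^a$ an anti-homomorphism, so, recalling the product $\circ$ on $\mathbb{S}\Omega S$,
$$\varphi(S)(ab)=(\rho^{ab},\lambda^{ab})=(\rho^a\cdot\rho^b,\ \lambda^b\cdot\lambda^a)=(\rho^a,\lambda^a)\circ(\rho^b,\lambda^b).$$
For injectivity, suppose $\varphi(S)(a)=\varphi(S)(b)$, so $\rho^a=\rho^b$ and $\lambda^a=\lambda^b$; by the injectivity of the maps $\rho_a\mapsto\rho^a$ and $\lambda_a\mapsto\lambda^a$ from Proposition \ref{prosr} and its dual, $\rho_a=\rho_b$ and $\lambda_a=\lambda_b$, and since $S$ is abundant it is weakly reductive by Proposition \ref{prowr}, whence $a=b$.

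Surjectivity is the step I expect to be the crux. Given $(\gamma,\delta)\in\mathbb{S}\Omega S$, Lemma \ref{lemug} writes $\gamma=\gamma(c_1,d_1)\ast u$ for a bimorphism $u$ of $\mathbb{L}(S)$ with $(c_1,d_1)\in E_{\Omega S}$; the idempotent cone $\gamma(c_1,d_1)$ is an idempotent of $\widehat{\mathbb{L}(S)}$ and hence, as in \cite{cross}, a principal cone $\rho^{e_0}$ with $Se_0=c_1$. Thus $\gamma=\rho^{e_0}\ast u$ with $u=\rho(e_0,w,f)$ a bimorphism, and by Lemma \ref{lemmor} $e_0\,\mathscr{R}^*\,w\,\mathscr{L}^*\,f$. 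Setting $a=w$ (so that $e_0w=w$, $e_0\in E(R^*_a)$ and $f\in E(L^*_a)$) gives $\rho^a=\rho^{e_0}\ast\rho(e_0,w,f)=\gamma$. Since, as in \cite[Section V]{cross}, each element of $\widehat{\Gamma_S}$ has a unique linking partner in $\widehat{\Delta_S}$, and both $\delta$ and $\lambda^a$ are linked to $\gamma=\rho^a$, we obtain $\delta=\lambda^a$, i.e.\ $\varphi(S)(a)=(\gamma,\delta)$. The genuinely delicate points are thus the linkedness of $(\rho^a,\lambda^a)$ in the first part and this matching of the second coordinate; the rest parallels \cite[Section V]{cross}.

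Finally, naturality. For a good homomorphism $h\colon S\to S'$ we must show $\varphi(S)\,\mathbb{C}\mathbb{S}(h)=h\,\varphi(S')$, where $\mathbb{C}\mathbb{S}(h)=\mathbb{S}(\Omega h)=\mathbb{S}(F_h,G_h)$. Writing $\rho^a=\rho^{a^\dagger}\ast u$ with $u=\rho(a^\dagger,a,f)$ as above and applying (\ref{eqnsm}), the first coordinate of $(\rho^a,\lambda^a)\,\mathbb{S}(\Omega h)$ is $\gamma(S'(a^\dagger h),(a^\dagger h)S')\ast F_h(u)=\rho^{a^\dagger h}\ast\rho(a^\dagger h,ah,fh)$, using $F_h$ from (\ref{eqnfhgh}) and the identification of principal idempotent cones from the first paragraph. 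As $h$ is good, $a^\dagger h\in E(R^*_{ah})$ and $fh\in E(L^*_{ah})$, so this equals $\rho^{ah}$, the first coordinate of $\varphi(S')(ah)$; the dual computation handles the second coordinate, so the square commutes. Since each $\varphi(S)$ is a semigroup isomorphism it is in particular a good homomorphism, hence an isomorphism in $\mathbf{CS}$, and $S\mapsto\varphi(S)$ is a natural isomorphism $1_{\mathbf{CS}}\to\mathbb{C}\mathbb{S}$.
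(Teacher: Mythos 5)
Your overall architecture matches the paper's: the paper also establishes surjectivity by identifying $\widehat{\Gamma_S}=\{\rho^a:a\in S\}$, $\widehat{\Delta_S}=\{\lambda^a:a\in S\}$ and $\mathbb{S}\Omega S=\{(\rho^a,\lambda^a):a\in S\}$ (delegating the computations to the analogues of Proposition IV.37 and Theorem IV.38 of Nambooripad), proves the homomorphism property from Proposition \ref{prosr} and its dual, gets injectivity from weak reductivity (Proposition \ref{prowr}) together with the last statement of Proposition \ref{prosr}, and checks naturality by the same computation with (\ref{eqnsm}) and (\ref{eqnfhgh}) that you sketch. However, your surjectivity step -- which you yourself flag as the crux -- contains a genuine error. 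You recover $a$ from the first coordinate $\gamma$ alone and then conclude $\delta=\lambda^a$ by asserting that ``each element of $\widehat{\Gamma_S}$ has a unique linking partner in $\widehat{\Delta_S}$.'' This is false in general. Uniqueness of the linked partner would force the projection $(\gamma,\delta)\mapsto\gamma$ of $\mathbb{S}\Omega S$ onto $\widehat{\Gamma_S}$ to be injective, i.e.\ it would force $a\mapsto\rho^a$ (equivalently the right regular representation $\rho$) to be injective -- precisely the hypothesis that Proposition \ref{prosr} is careful \emph{not} to assume. Concretely, take $S$ the two-element left zero band $\{x,y\}$ (regular, hence concordant): every right translation is the identity, so $\rho^x=\rho^y$, while $\lambda^x\neq\lambda^y$; thus $\rho^x$ is linked to both $\lambda^x$ and $\lambda^y$. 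Applied to the linked pair $(\rho^x,\lambda^y)$, your procedure may output $a=x$ and then $\lambda^a\neq\delta$, so the argument as written does not prove surjectivity.

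The repair is to extract the element $a$ from the \emph{linkage data} rather than from $\gamma$ alone, which is exactly what the result cited by the paper (the analogue of \cite[Theorem IV.38]{cross}) does: if $(\gamma,\delta)$ is linked via $(c,d)$, write $\gamma=\gamma(Se,eS)\ast\bigl(\rho(e,a,f)\bigr)^\circ=\rho^a$ with $Se\in M\Gamma_S(eS)$, and then compute $\delta=\chi(c,d)(\gamma)=\delta(c,d')\ast\bigl(g\bigr)^\circ$ where $g$ is the transpose of the morphism occurring in this representation; for $\Omega S$ the transpose of $\rho(e,a,f)$ is (up to the identifications of Theorem \ref{thmdualrs}) the morphism $\lambda(f,a,e)$ of $\mathbb{R}(S)$, so that $\delta=\lambda^f\ast\bigl(\lambda(f,a,e)\bigr)^\circ=\lambda^a$ for the \emph{same} $a$. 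In other words, it is not that the partner of $\gamma$ is unique, but that the specific partner produced by $\chi(c,d)$ is tied to the same semigroup element through the transpose. With that replacement your proof becomes essentially the paper's proof; the well-definedness, homomorphism, injectivity and naturality portions are sound as written.
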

\begin{proof}
First, observe that for a concordant semigroup $S$ with the cross-connection $\Omega S=(\mathbb{L}(S),\mathbb{R}(S);\Gamma_S,\Delta_S)$, as argued in \cite[Proposition IV.37]{cross}, any idempotent of the concordant semigroup $\widehat{\Gamma_S}$ is of the form $\rho^e=\gamma(Se,eS)$. Then using Lemma \ref{lemug}, we can see that any consistent cone in  $\widehat{\Gamma_S}$ is of the form $ \gamma(Se,eS)\ast \rho(e,a,f)$ where $\rho(e,a,f)$ is a bimorphism in $\mathbb{L}(S)$. Similarly, any consistent cone in  $\widehat{\Delta_S}$ is of the form $ \delta(Sf,fS)\ast \lambda(f,a,e)$ where $\lambda(f,a,e)$ is a bimorphism in $\mathbb{R}(S)$. Then as shown in \cite[Proposition IV.37]{cross}, we can see that the concordant semigroups $\widehat{\Gamma_S}$ and $\widehat{\Delta_S}$ defined by (\ref{eqnug}) are given by:
$$\widehat{\Gamma_S}=\{\rho^a: a\in S\}\text{ and }\widehat{\Delta_S}=\{ \lambda^a:a\in S \}.$$
Further, as in \cite[Theorem IV.38]{cross}, the cross-connection semigroup $\mathbb{S}\Omega S$ is given by:
$$\mathbb{S}\Omega S=\{ (\rho^a,\lambda^a) :a\in S \}.$$
This implies that $\varphi(S)$ is surjective. By Proposition \ref{prosr} and its dual, we see that the map $\varphi(S)$ is a homomorphism. By Proposition \ref{prowr}, a concordant semigroup is {weakly reductive}. So the last statement of Proposition \ref{prosr} and its dual imply that $a \mapsto (\rho^a,\lambda^a)$ is injective. Hence $\varphi(S)$ is an isomorphism.

Now, to show that $\varphi\colon S\mapsto\varphi(S)$ is a natural transformation, i.e., for a good homomorphism $h\colon S\to S'$ of concordant semigroups, we have to show that the following diagram commutes:
\begin{equation*}\label{}
\xymatrixcolsep{3pc}\xymatrixrowsep{4pc}\xymatrix
{
	S \ar[d]_{h}\ar[rr]^{\varphi(S)}   	&& \mathbb{S}\Omega S \ar[d]^{\mathbb{S}\Omega h} \\       
	S' \ar[rr]^{\varphi(S')} && \mathbb{S} \Omega S'
}
\end{equation*}
For $a\in S$, we have 
$$ah\varphi(S')=(\rho^{ah},\lambda^{ah}).$$
Also,
\begin{align*}\label{}
\begin{split}
a\varphi(S)\mathbb{S}\Omega h &= (\rho^a,\lambda^a)\mathbb{S}\Omega h\\
&=(\gamma(Se,eS)\ast\rho(e,a,f),\delta(Sf,fS)\ast \lambda(f,a,e))\mathbb{S}\Omega h\quad (\text{as discussed above})\\
&=(\gamma(S'eh,ehS')\ast\rho(eh,ah,fh),\delta(S'fh,fhS')\ast \lambda(fh,ah,eh)) \ (\text{by (\ref{eqnsm}) and (\ref{eqnfhgh})})\\
&=(\rho^{eh}\ast\rho(eh,ah,fh),\lambda^{fh}\ast \lambda(fh,ah,eh))\\
&=(\rho^{ehah},\lambda^{ahfh})\\
&=(\rho^{(ea)h},\lambda^{(af)h})\quad (\text{since $h$ is a good homomorphism})\\
&=(\rho^{ah},\lambda^{ah}).\\
\end{split}
\end{align*}
So, the above diagram commutes and hence $\varphi$ is a natural isomorphism.	
\end{proof}

\begin{thm}\label{thmadjsc}
For each cross-connection $\Omega=(\Gamma,\Delta;\mathcal{C},\mathcal{D})$, let
$$\psi(\Omega) = (F_\Omega,G_\Omega)$$
where $F_\Omega\colon \mathcal{C} \to \mathbb{L}(\mathbb{S}\Omega)$ and $G_\Omega\colon \mathcal{D} \to \mathbb{R}(\mathbb{S}\Omega)$ are isomorphisms as defined in (\ref{eqnfo}) and (\ref{eqngo}), respectively. Then $\psi(\Omega)$ is an isomorphism of cross-connections and the mapping 
$$\Omega\mapsto\psi(\Omega)$$
is a natural isomorphism $\psi\colon1_{\mathbf{CC}}\to \mathbb{S}\mathbb{C}$.
\end{thm}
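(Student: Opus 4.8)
The plan is to establish Theorem \ref{thmadjsc} in two stages: first that $\psi(\Omega) = (F_\Omega, G_\Omega)$ is a CC-isomorphism for each fixed $\Omega$, and then that the family $\{\psi(\Omega)\}$ is natural in $\Omega$, so that together with Theorem \ref{thmadjcs} it yields the claimed adjoint equivalence $\mathbf{CS}\simeq\mathbf{CC}$. The first stage is essentially already in hand: we noted just before Theorem \ref{thmcxncon} (and in the unnumbered theorem following (\ref{eqngo})) that $F_\Omega\colon\mathcal{C}\to\mathbb{L}(\mathbb{S}\Omega)$ and $G_\Omega\colon\mathcal{D}\to\mathbb{R}(\mathbb{S}\Omega)$ are isomorphisms of consistent categories, the proof being a routine transcription of \cite[Theorem IV.35]{cross}. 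So what remains for stage one is to check that the pair $(F_\Omega, G_\Omega)$ satisfies axioms (M1)--(M3), i.e., that it is a genuine CC-morphism $\Omega\to\mathbb{C}\mathbb{S}(\Omega)$ and not merely a pair of category isomorphisms. Here (M1) is immediate since isomorphisms of consistent categories preserve inclusions and bimorphisms; (M2) follows by tracing through the definition (\ref{eqnfo}) of $F_\Omega$ on objects and the identification of idempotent cones $\gamma(c,d)$ in $\mathbb{S}\Omega$ from Lemma \ref{lemesg}; and (M3) is the statement that $F_\Omega$ and $G_\Omega$ are compatible with transposes, which one checks against the Yoneda-style characterisation of $f^\ddagger$ given in Section \ref{secdual}. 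Since $F_\Omega,G_\Omega$ are isomorphisms, $\psi(\Omega)$ is automatically an isomorphism in $\mathbf{CC}$, with inverse $(F_\Omega^{-1},G_\Omega^{-1})$.

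The second stage — naturality of $\psi$ — is where the real work lies. The square to verify is: for a CC-morphism $m=(F_m,G_m)\colon\Omega\to\Omega'$,
\begin{equation*}
\psi(\Omega)\circ\mathbb{C}\mathbb{S}(m) = m\circ\psi(\Omega'),
\end{equation*}
an equality of CC-morphisms $\Omega\to\mathbb{S}\mathbb{C}(\Omega') = \mathbb{S}\Omega'$ composed appropriately; concretely, this amounts to two commuting squares of functors, one in the $\mathbb{L}$-component and one in the $\mathbb{R}$-component. For the $\mathbb{L}$-component we must show
\begin{equation*}
F_\Omega\circ F_{\mathbb{S}m} = F_m\circ F_{\Omega'}
\end{equation*}
as functors $\mathcal{C}\to\mathbb{L}(\mathbb{S}\Omega')$, where $F_{\mathbb{S}m}\colon\mathbb{L}(\mathbb{S}\Omega)\to\mathbb{L}(\mathbb{S}\Omega')$ is the functor $F_h$ from Theorem \ref{thmccmor} associated to the good homomorphism $h=\mathbb{S}m$. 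The strategy is to evaluate both composites on an arbitrary object $c\in v\mathcal{C}$ and an arbitrary morphism $f\in\mathcal{C}(c,c')$, unravelling the definitions (\ref{eqnfo}), (\ref{eqnfhgh}) and (\ref{eqnsm}): on objects, both sides should land on $\mathbb{S}\Omega'\bigl(\gamma(F_m(c),G_m(d)),\delta(F_m(c),G_m(d))\bigr)$, using (M2) to match the idempotent cones; on morphisms, both sides should reduce to the partial right translation by the cone $\gamma(F_m(c),G_m(d))\ast F_m(f^\circ)$, where the key point is that $F_m$ being a consistent-category-preserving functor sends the epimorphic component $f^\circ$ to $(F_m(f))^\circ$ and sends the linked pair structure to the linked pair structure. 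The $\mathbb{R}$-component square is handled dually using $G_\Omega,G_m,G_{\mathbb{S}m}$ and axiom (M3) to control transposes. Because almost every ingredient here has a precise analogue in Nambooripad's treatment, one can — as the paper does elsewhere — legitimately appeal to \cite[Theorem V.15]{cross} or the corresponding result and merely indicate the modifications.

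The main obstacle, I expect, is the bookkeeping in the morphism part of the naturality square: unlike the object assignments, which are pinned down uniquely, the definition of $F_\Omega(f)$ in (\ref{eqnfo}) involves a choice — namely a consistent cone $\delta\in\widehat{\Delta}$ with prescribed vertex that is linked to $\gamma(c,d)\ast f^\circ$ — and similarly $\mathbb{S}m$ in (\ref{eqnsm}) is written in terms of a chosen bimorphic factorisation $\gamma=\gamma(c',d)\ast u$. One must check that these choices propagate consistently through both composites, i.e., that $F_m$ carries a linked pair to a linked pair (which should follow from (M2), (M3) and the naturality of the isomorphism $\chi$ from Theorem \ref{thmnatiso}), so that the apparent ambiguity does not actually obstruct the equality of functors. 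Once this compatibility of linked-pair structure under CC-morphisms is isolated as a lemma, the two squares close by direct computation, and combining this with Theorem \ref{thmadjcs} gives the adjoint equivalence $\mathbf{CS}\simeq\mathbf{CC}$, completing the proof.
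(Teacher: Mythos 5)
Your outline follows essentially the same route as the paper, which in fact omits the proof entirely and simply states that an exact adaptation of \cite[Theorem V.17]{cross} suffices; your two-stage plan (checking (M1)--(M3) for $(F_\Omega,G_\Omega)$, then closing the componentwise naturality squares via (\ref{eqnfo}), (\ref{eqnfhgh}), (\ref{eqnsm}) and the preservation of linked pairs, which is exactly what Theorem \ref{thmgdhom} encodes) is precisely that adaptation spelled out. The only slip is the citation: the relevant result in Nambooripad is Theorem V.17, not V.15, but since you hedge with ``or the corresponding result'' this is immaterial.
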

We omit the proof as an exact adaptation of the proof of \cite[Theorem V.17]{cross} suffices.
\begin{thm}\label{cateq}
The category $\mathbf{CS}$ of concordant semigroups is equivalent to the category $\mathbf{CC}$ of cross-connections of consistent categories.
\end{thm}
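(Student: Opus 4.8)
The plan is to assemble the category equivalence from the four functors and two natural isomorphisms already constructed in Section \ref{seccat}. By Theorem \ref{funs} we have a functor $\mathbb{S}\colon \mathbf{CC}\to \mathbf{CS}$, and by Theorem \ref{func} we have a functor $\mathbb{C}\colon \mathbf{CS}\to \mathbf{CC}$. By Theorem \ref{thmadjcs}, the assignment $S\mapsto\varphi(S)$ is a natural isomorphism $1_{\mathbf{CS}}\to \mathbb{C}\mathbb{S}$, and by Theorem \ref{thmadjsc}, the assignment $\Omega\mapsto\psi(\Omega)$ is a natural isomorphism $1_{\mathbf{CC}}\to \mathbb{S}\mathbb{C}$. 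So first I would simply invoke these: the existence of natural isomorphisms $1_{\mathbf{CS}}\cong \mathbb{C}\mathbb{S}$ and $1_{\mathbf{CC}}\cong \mathbb{S}\mathbb{C}$ is, by definition, exactly the statement that $\mathbb{S}$ and $\mathbb{C}$ form a pair of inverse equivalences between $\mathbf{CC}$ and $\mathbf{CS}$.

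The core of the proof is thus a one-line appeal to the standard characterisation of an equivalence of categories (see \cite{mac}): a functor $\mathbb{S}$ is an equivalence if and only if there is a functor $\mathbb{C}$ in the opposite direction together with natural isomorphisms $\mathbb{S}\mathbb{C}\cong 1_{\mathbf{CC}}$ and $\mathbb{C}\mathbb{S}\cong 1_{\mathbf{CS}}$. Since Theorems \ref{func}, \ref{funs}, \ref{thmadjcs} and \ref{thmadjsc} supply precisely these data, the conclusion is immediate. If one wants the sharper adjoint-equivalence statement, I would additionally note that the unit $\varphi$ and counit $\psi$ can be adjusted (by the standard argument replacing one of them so that the triangle identities hold) to obtain an adjoint equivalence; but this refinement is not needed for the bare equivalence asserted in the theorem.

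There is essentially no obstacle here, since all the substantive work — constructing the functors and verifying that $\varphi$ and $\psi$ are componentwise isomorphisms and natural — has been carried out in the preceding theorems. The only point requiring a line of care is confirming that $\varphi(S)$ is a genuine isomorphism in $\mathbf{CS}$ (not merely a bijective good homomorphism): this uses Proposition \ref{prowr} via the injectivity clause of Proposition \ref{prosr}, together with the surjectivity established from the explicit description $\mathbb{S}\Omega S=\{(\rho^a,\lambda^a):a\in S\}$, and the fact that the inverse of a bijective good homomorphism between concordant semigroups is again good (which follows since $\mathscr{L}^*$ and $\mathscr{R}^*$ are determined by the abundant structure preserved in both directions). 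Likewise $\psi(\Omega)=(F_\Omega,G_\Omega)$ is an isomorphism of cross-connections because $F_\Omega$ and $G_\Omega$ are isomorphisms of consistent categories by the theorem at the end of Section \ref{seccxn}. Hence the proof reads: by Theorems \ref{funs}, \ref{func}, \ref{thmadjcs} and \ref{thmadjsc}, the functors $\mathbb{S}$ and $\mathbb{C}$ are mutually inverse up to natural isomorphism, so they constitute an equivalence of categories.

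\begin{proof}
By Theorem \ref{funs}, $\mathbb{S}\colon \mathbf{CC}\to\mathbf{CS}$ is a functor and by Theorem \ref{func}, $\mathbb{C}\colon \mathbf{CS}\to\mathbf{CC}$ is a functor. By Theorem \ref{thmadjcs}, the family $\{\varphi(S)\}_{S\in\mathbf{CS}}$ of isomorphisms is a natural isomorphism $\varphi\colon 1_{\mathbf{CS}}\to\mathbb{C}\mathbb{S}$, and by Theorem \ref{thmadjsc}, the family $\{\psi(\Omega)\}_{\Omega\in\mathbf{CC}}$ of isomorphisms is a natural isomorphism $\psi\colon 1_{\mathbf{CC}}\to\mathbb{S}\mathbb{C}$. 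Thus $\mathbb{C}\mathbb{S}\cong 1_{\mathbf{CS}}$ and $\mathbb{S}\mathbb{C}\cong 1_{\mathbf{CC}}$, so by the standard characterisation of an equivalence of categories \cite{mac}, the functors $\mathbb{S}$ and $\mathbb{C}$ witness an equivalence between $\mathbf{CC}$ and $\mathbf{CS}$. Hence $\mathbf{CS}$ is equivalent to $\mathbf{CC}$.
\end{proof}
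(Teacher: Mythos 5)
Your proof is correct and follows essentially the same route as the paper: the paper's own proof simply cites Theorems \ref{thmadjcs} and \ref{thmadjsc} to conclude that $(\mathbb{C},\mathbb{S},\varphi,\psi)$ is an adjoint equivalence, which is exactly the assembly of the functors and natural isomorphisms you carry out. Your extra remarks on $\varphi(S)$ being a genuine isomorphism and on upgrading to an adjoint equivalence are consistent with, and no stronger than, what the paper already establishes in the cited theorems.
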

\begin{proof}
By Theorem \ref{thmadjcs} and Theorem \ref{thmadjsc}, it is clear that $(\mathbb{C},\mathbb{S},\varphi,\psi)\colon \mathbf{CS}\to \mathbf{CC}$ is an adjoint equivalence. Hence the theorem.
\end{proof}

\section{Consistent categories, normal categories and inductive cancellative categories}  

Recall from Lemma \ref{lemugrb} that if we specialise our discussion in Section \ref{seccons}-\ref{seccxn} to normal categories, we obtain a cross-connection $\overline{\Omega}=(\overline{\Gamma},\overline{\Delta};\overline{\mathcal{C}},\overline{\mathcal{D}})$ of normal categories $\overline{\mathcal{C}}$ and $\overline{\mathcal{D}}$. Further, extending the discussion, by Theorem \ref{thmcxncon} we can obtain a regular cross-connection semigroup $S\overline{\Omega}$ and using Theorem \ref{cateq} we have the following result of \cite[Theorem V.18]{cross}:
\begin{thm}
The category $\mathbf{RS}$ of regular semigroups is equivalent to the category $\mathbf{Cr}$ of cross-connections of normal categories.
\end{thm}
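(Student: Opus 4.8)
The plan is to derive this statement by restricting the adjoint equivalence $(\mathbb{C},\mathbb{S},\varphi,\psi)\colon\mathbf{CS}\to\mathbf{CC}$ of Theorem~\ref{cateq} to suitable full subcategories. First I would record two embeddings. On the semigroup side, $\mathbf{RS}$ is a full subcategory of $\mathbf{CS}$: every regular semigroup is concordant (it is idempotent-connected by \cite{elqallali} and its idempotents generate a regular subsemigroup trivially), and any semigroup homomorphism between regular semigroups is automatically a good homomorphism, since in a regular semigroup $\mathscr{L}^*=\mathscr{L}$ and $\mathscr{R}^*=\mathscr{R}$ and these relations are preserved by homomorphisms. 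On the categorical side, $\mathbf{Cr}$ is a full subcategory of $\mathbf{CC}$: a cross-connection of normal categories is in particular a cross-connection of consistent categories (every normal category is consistent, as noted after Definition~\ref{dfnnormc}), and for cross-connections of normal categories a CC-morphism is just a pair of inclusion-preserving functors satisfying (M2) and (M3) — the clause of (M1) requiring bimorphisms to be preserved is vacuous here, because every bimorphism in a normal category is an isomorphism (\cite[Corollary II.8]{cross}) and functors always preserve isomorphisms. Thus the morphism sets of $\mathbf{Cr}$ and of $\mathbf{CC}$ agree on objects of $\mathbf{Cr}$, the two formulations of a cross-connection (pair of functors versus single functor) describing the same objects.

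Next I would check that the equivalence carries one subcategory into the other on objects. For $S\in\mathbf{RS}$, since $\mathscr{L}^*=\mathscr{L}$ and $\mathscr{R}^*=\mathscr{R}$, Lemma~\ref{lemmor} together with regularity shows that every bimorphism of $\mathbb{L}(S)$ is an isomorphism (a bimorphism $\rho(e,u,f)$ has $e\,\mathscr{R}\,u\,\mathscr{L}\,f$, and an inverse of $u$ in $fSe$ gives a two-sided inverse for the morphism). Hence the consistent factorisation provided by (CC~3) is in fact a normal factorisation, so (NC~3) holds; (NC~1), (NC~2) hold as part of (CC~1), (CC~2); and for (NC~4) the principal cone $\rho^e$ attached to an idempotent has $\rho^e(Se)=1_{Se}$. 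Therefore $\mathbb{L}(S)$, and dually $\mathbb{R}(S)$, is a normal category, so $\mathbb{C}(S)=\Omega S\in\mathbf{Cr}$. Conversely, given $\Omega=(\Gamma,\Delta;\mathcal{C},\mathcal{D})\in\mathbf{Cr}$, in the notation of Lemma~\ref{lemnorm} we have $\overline{\mathcal{C}}=\mathcal{C}$ and $\overline{\mathcal{D}}=\mathcal{D}$, since by (NC~3) every morphism of a normal category has a normal factorisation; consequently $\widehat{\Gamma}=\widehat{\overline{\Gamma}}$ and $\widehat{\Delta}=\widehat{\overline{\Delta}}$ are regular semigroups by Lemma~\ref{lemugrb}. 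Writing a typical element of $\mathbb{S}\Omega$ as in the proof of Theorem~\ref{thmcxncon}, its components have the form $\gamma(c,d)\ast u$ and $\delta(c',d')\ast u^{\ddagger}$ with $u$ a bimorphism of $\mathcal{C}$ — hence, $\mathcal{C}$ being normal, an isomorphism — from which one checks as in \cite{cross} that every element of $\mathbb{S}\Omega$ is regular. Thus $\mathbb{S}(\Omega)=\mathbb{S}\Omega\in\mathbf{RS}$.

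Finally, since $\mathbb{C}$ sends $\mathbf{RS}$ into $\mathbf{Cr}$ and $\mathbb{S}$ sends $\mathbf{Cr}$ into $\mathbf{RS}$, and both are full subcategories, the functors restrict to $\mathbb{C}|_{\mathbf{RS}}\colon\mathbf{RS}\to\mathbf{Cr}$ and $\mathbb{S}|_{\mathbf{Cr}}\colon\mathbf{Cr}\to\mathbf{RS}$. The natural isomorphism $\varphi\colon 1_{\mathbf{CS}}\to\mathbb{C}\mathbb{S}$ then restricts to a natural isomorphism $1_{\mathbf{RS}}\to\mathbb{C}|_{\mathbf{RS}}\circ\mathbb{S}|_{\mathbf{Cr}}$ (each component $\varphi(S)$ for $S\in\mathbf{RS}$ is an isomorphism and naturality is inherited), and dually $\psi$ restricts to $1_{\mathbf{Cr}}\to\mathbb{S}|_{\mathbf{Cr}}\circ\mathbb{C}|_{\mathbf{RS}}$. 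Hence $(\mathbb{C}|_{\mathbf{RS}},\mathbb{S}|_{\mathbf{Cr}},\varphi|_{\mathbf{RS}},\psi|_{\mathbf{Cr}})$ is an adjoint equivalence, proving $\mathbf{RS}\simeq\mathbf{Cr}$.

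The routine parts are clear; the step I expect to require the most care is the bookkeeping identification of $\mathbf{Cr}$ — Nambooripad's category of cross-connections of normal categories, with his cross-connection morphisms — as precisely the full subcategory of $\mathbf{CC}$ spanned by cross-connections of normal categories. This amounts to matching his morphism axioms with (M1)--(M3) specialised to the normal setting (where preserving bimorphisms is automatic) and to checking that the single-functor and two-functor presentations of a cross-connection, discussed in the remark after Definition~\ref{ccxn}, determine the same objects; once this dictionary is in place, everything else follows formally from Theorem~\ref{cateq}.
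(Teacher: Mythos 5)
Your proposal is correct and follows essentially the same route as the paper: the paper likewise obtains this theorem by specialising the general machinery (Lemma~\ref{lemugrb}, Theorem~\ref{thmcxncon}) to normal categories and then invoking the equivalence of Theorem~\ref{cateq}, which is exactly your restriction of the adjoint equivalence $(\mathbb{C},\mathbb{S},\varphi,\psi)$ to $\mathbf{RS}$ and $\mathbf{Cr}$. Your write-up merely makes explicit the full-subcategory bookkeeping (bimorphisms being isomorphisms in the normal case, goodness of homomorphisms between regular semigroups) that the paper leaves implicit.
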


Now, we proceed to describe the relationship between our approach and Armstrong's approach using inductive cancellative categories \cite{armstrong}. We refer the reader to \cite{mem} for the formal definitions of regular biordered set, $E$-paths, singular $E$-squares etc. We begin by recalling the definition of an ordered cancellative category.

\begin{dfn}\label{dfnocc}
Let $\mathcal{I}$ be a small category and $\leq$ a partial order on $\mathcal{I}$.  Let $e,f \in v\mathcal{I}$ and $x,y$ etc denote arbitrary morphisms of $\mathcal{I}$ such that $\mathbf{d}(x)$ and $\mathbf{r}(x)$ is the domain and codomain, respectively of an arbitrary morphism $x$. Then $(\mathcal{I},\leq)$ is called an \emph{ordered cancellative category} if the following hold.
\begin{enumerate}
		\item [(OCC1)] Every morphism in $\mathcal{I}$ is a bimorphism.
		\item [(OCC2)] If $u\leq x$, $v\leq y$ and $\mathbf{r}(u)=\mathbf{d}(v)$, $\mathbf{r}(x)=\mathbf{d}(y)$, then $uv \leq xy$.
		\item [(OCC3)] If $x\leq y$, then $1_{\mathbf{d}(x)}\leq 1_{\mathbf{d}(y)}$ and $1_{\mathbf{r}(x)}\leq 1_{\mathbf{r}(y)}$.
		\item [(OCC4)] If $1_e\leq 1_{\mathbf{d}(x)}$, then there exists a unique element $e{\downharpoonleft} x$ (called the \emph{restriction} of $x$ to $e$) in $\mathcal{I}$ such that $e{\downharpoonleft} x\leq x$ and $\mathbf{d}(e{\downharpoonleft} x) = e$.
		\item [(OCC5)] If $1_f\leq 1_{\mathbf{r}(x)}$, then there exists a unique element $x{\downharpoonright} f$ (called the \emph{corestriction} of $x$ to $f$) in $\mathcal{I}$ such that $x{\downharpoonright} f \leq x$ and $\mathbf{r}(x{\downharpoonright} f) = f$.
		\end{enumerate}
\end{dfn} 

\begin{dfn}\label{dfnicc}
Let $(\mathcal{I},\leq)$ be an {ordered cancellative category} with $v\mathcal{I}=E$ a regular biordered set such that $\mathrel{\omega}$ coincides with $\leq$ on $E$. Suppose for $e,f \in E$ satisfying $e\mathrel{\mathscr{R}} f$ and $e\mathrel{\mathscr{L}}f$, there is a distinguished morphism $[e,f]$ from $e$ to $f$ such that 
\begin{enumerate}[(i)]
	\item $[e,e] = 1_e$;
	\item if $e\mathrel{\mathscr{R}}f\mathrel{\mathscr{R}}g$ or $e\mathrel{\mathscr{L}}f\mathrel{\mathscr{L}}g$  then $[e,f][f,g]=[e,g]$;
	\item if $[g,h]$ exists and $e\mathrel{\omega} g$ then $[e,f]$ exists with $f=heh$ and $[e,f] \leq [g,h]$.
\end{enumerate}
Then $(\mathcal{I},\leq )$ is an \emph{inductive cancellative category} if the following axioms and their duals hold.
\begin{enumerate}
\item[(ICC1)] Let $x\in \mathcal{I}$ and for $i=1,2$, let $e_i$, $f_i \in E$ such that $e_i \leq \mathbf{d}(x)$ and $f_i = \mathbf{r}(1_{e_i}{\downharpoonleft} x)$. If $e_1\mathrel{\omega}^r e_2$, then $f_1\mathrel{\omega}^r f_2$, and
$$[e_1,e_1e_2](e_1e_2{\downharpoonleft} x) = (e_1{\downharpoonleft} x)[f_1,f_1f_2].$$
\item[(ICC2)] If $\bigl[ \begin{smallmatrix} e&f\\ g&h \end{smallmatrix} \bigr]$ is a singular E-square, then $[e,f][f,h] = [e,g][g,h]$.
\end{enumerate}
\end{dfn}

Let $\Omega=(\mathcal{C},\mathcal{D};{\Gamma},\Delta)$ be a cross-connection of consistent categories. We proceed to identify the inductive cancellative category $\mathcal{I}(\Omega)$ associated with the cross-connection $\Omega$.

Clearly, $v\mathcal{I}(\Omega)$ is the regular biordered set $E_\Omega$ as described in Lemma \ref{lemesg}. In the sequel, as in the lemma, we shall identify the idempotent cone $(\gamma(c,d),\delta(c,d))$ (and hence the identity morphisms of the category $\mathcal{I}(\Omega)$) with the pair of objects $(c,d)\in v\mathcal{C}\times v\mathcal{D}$. Hence,
$$E_\Omega=\{ (c,d)\in v\mathcal{C}\times v\mathcal{D} : c\in M\Gamma(d)\}.$$

Given two objects $(c,d),(c',d')\in v\mathcal{I}(\Omega)$, any bimorphism $u\colon c \to c'$ in the category $\mathcal{C}$ is defined as a morphism in the category $\mathcal{I}(\Omega)$ from $(c,d)$ to $(c',d')$.
Then, corresponding to the bimorphism $u$, as in Theorem \ref{thmcxncon}, there is a connecting isomorphism $\upsilon\colon \langle (c,d) \rangle \to \langle (c',d') \rangle$. 

Given any two morphisms $u_1\colon(c_1,d_1)\to(c_1',d_1')$ and $u_2\colon(c_2,d_2)\to(c_2',d_2')$ in the category $\mathcal{I}(\Omega)$ with connecting isomorphisms $\upsilon_1$ and $\upsilon_2$, respectively, we define a relation $\leq_\Omega$ as follows:
\begin{equation*}\label{eqnleqo}
u_1\leq_\Omega u_2 \iff 
(c_1,d_1) \subseteq (c_2,d_2), \ u_1= (j(c_1,c_2)u_2)^\circ  \text{ and } (c_1',d_1')=(c_1,d_1)\upsilon_2
\end{equation*}
where $(j(c_1,c_2)u_2)^\circ$ is the epimorphic component of the monomorphism $j(c_1,c_2)u_2$. It can be easily verified that $\leq_\Omega$ is a partial order on $\mathcal{I}(\Omega)$. 

Further, given a morphism $u\colon (c,d)\to(c',d')$ in $\mathcal{I}(\Omega)$ such that $(c_1,d_1)\subseteq (c,d)$, then we define the restriction $(c_1,d_1){\downharpoonleft}u$ (of the morphism $u$ to $(c_1,d_1)$) as the morphism $(j(c_1,c)u)^\circ$. Similarly, for the morphism $u\colon (c,d)\to(c',d')$ in $\mathcal{I}(\Omega)$ with connecting isomorphism $\upsilon$ such that $(c_1',d_1')\subseteq (c',d')$, we define $(c_1,d_1)=(c_1',d_1')\upsilon^{-1}$. Then, the corestriction $u{\downharpoonright}(c_1',d_1')$ (of the morphism $u$ to $(c_1',d_1')$) is defined as the morphism $(j(c_1,c)u)^\circ$. Then we can easily verify that $(\mathcal{I}(\Omega),\leq_\Omega)$ is an ordered cancellative category. 

Finally, for $(c,d),(c',d')\in v\mathcal{I}(\Omega)$ such that $(c,d)\mathrel{\mathscr{R}}(c',d')$ or $(c,d)\mathrel{\mathscr{L}}(c',d')$, 
we define the isomorphism $\gamma(c',d')(c)$ as the distinguished morphism in $\mathcal{I}(\Omega)$ from $(c,d)$ to $(c',d')$. Hence, we can verify the following theorem:
\begin{thm}
$(\mathcal{I}(\Omega),\leq_\Omega)$ is an inductive cancellative category.
\end{thm}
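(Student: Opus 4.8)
The plan is to verify the remaining axioms directly. It has already been established that $(\mathcal{I}(\Omega),\leq_\Omega)$ is an ordered cancellative category with $v\mathcal{I}(\Omega)=E_\Omega$ a regular biordered set on which $\omega$ agrees with $\leq_\Omega$ on the identities. Hence what is left is: (a) to check that the morphisms $\gamma(c',d')(c)$ are well defined distinguished morphisms satisfying conditions (i)--(iii) of Definition \ref{dfnicc}; and (b) to verify the axioms (ICC1), (ICC2) and their duals. Throughout one uses the biorder data on $E_\Omega$ (basic products, sandwich sets, singular $E$-squares) recorded in \cite[Section V.1.2]{cross}, together with the descriptions $(c,d)\omega^l(c',d')\iff c\subseteq c'$ and $(c,d)\omega^r(c',d')\iff d\subseteq d'$; in particular $(c,d)\mathscr{R}(c',d')$ forces $d=d'$ and $(c,d)\mathscr{L}(c',d')$ forces $c=c'$.

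For the distinguished morphisms: if $(c,d)\mathscr{R}(c',d)$ then $c,c'\in M\Gamma(d)=MH(\gamma(c',d);-)$, so by Lemma \ref{lemmset} the component $\gamma(c',d)(c)\colon c\to c'$ is an isomorphism, hence a bimorphism and a genuine morphism of $\mathcal{I}(\Omega)$; if $(c,d)\mathscr{L}(c,d')$ the prescribed morphism is $\gamma(c,d')(c)=1_c$, since $\gamma(c,d')$ is idempotent with vertex $c$. Condition (i) is then immediate. Condition (ii) in the $\mathscr{R}$-case $(c,d)\mathscr{R}(c',d)\mathscr{R}(c'',d)$ follows by evaluating at the object $c$ the cone identity $\gamma(c'',d)=\gamma(c',d)\ast\gamma(c'',d)(c')$, which holds by Lemma \ref{lemgc}, and noting that $\gamma(c',d)(c'')$ and $\gamma(c'',d)(c')$ are mutually inverse; the $\mathscr{L}$-case is trivial, all maps being identities. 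Condition (iii) is a statement purely about the regular biordered set $E_\Omega$ and the components $\gamma(\cdot,\cdot)(\cdot)$, verified exactly as for normal categories in \cite[Section V.1.2]{cross}.

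For the axioms: (ICC2) concerns a singular $E$-square in $E_\Omega$ and the equality $[e,f][f,h]=[e,g][g,h]$; since every distinguished morphism is a component of an idempotent cone $\gamma(\cdot,\cdot)$, this reduces, via the cone multiplication laws, to the singular-square identity for the underlying regular biordered set, handled as in \cite[Section V.1.2]{cross} (its dual being symmetric). For (ICC1), take $u\colon(c,d)\to(c',d')$ with connecting isomorphism $\upsilon$ and identities $e_i=(c_i,d_i)\le(c,d)$, $i=1,2$. By definition $1_{e_i}{\downharpoonleft}u=(j(c_i,c)u)^\circ$ has codomain $f_i=(c_i,d_i)\upsilon$; since $e_1\omega^r e_2$ means $d_1\subseteq d_2$ and $\upsilon$ respects $\omega^r$-comparability of identities below its domain, $f_1\omega^r f_2$. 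The required identity $[e_1,e_1e_2](e_1e_2{\downharpoonleft}u)=(e_1{\downharpoonleft}u)[f_1,f_1f_2]$ then unwinds to commutativity of the square relating the inclusion functor on $\sigma(c)$ with the functor $T^u$, i.e. the naturality of the natural isomorphism attached to the consistent bimorphism $u$ (the analogue of $\bar\alpha$ in Section \ref{secconc}, supplied by the consistency axiom (CC 4)); once more the dual is symmetric.

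The main obstacle is not any single deep step but the sheer bookkeeping: one must pin down the biordered structure of $E_\Omega$ precisely enough to recognise its singular squares and basic products, and then thread the cross-connection data — the bifunctors $\Gamma(-,-),\Delta(-,-)$, the transpose $(-)^\ddagger$, and the natural isomorphism $\chi$ of Theorem \ref{thmnatiso} — coherently through all cases and their duals. A cleaner route sidesteps most of this: $\mathbb{S}\Omega$ is concordant by Theorem \ref{thmcxncon}, so Armstrong's theorem \cite{armstrong} already furnishes the inductive cancellative category of $\mathbb{S}\Omega$; one then checks that $(c,d)\mapsto(\gamma(c,d),\delta(c,d))$ on objects and $[\,u\colon(c,d)\to(c',d')\,]\mapsto(\gamma(c,d)\ast u,\ \delta(c',d')\ast u^\ddagger)\in\mathbb{S}\Omega$ on morphisms define an isomorphism of ordered categories carrying $\leq_\Omega$, the biorder structure on objects, and the distinguished morphisms of $\mathcal{I}(\Omega)$ to the corresponding data of $\mathcal{I}(\mathbb{S}\Omega)$, so that the inductive cancellative axioms transfer. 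Along this route the only genuine work is verifying order-compatibility, i.e. that $u_1\leq_\Omega u_2$ matches the natural partial order of $\mathbb{S}\Omega$ and that distinguished morphisms correspond.
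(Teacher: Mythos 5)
Your proposal is essentially correct, but it is worth separating its two halves. The direct verification you sketch first is in substance the route the paper itself takes (the paper leaves the check of the axioms as a routine verification from the construction): your identification of the distinguished morphisms as the components $\gamma(c',d)(c)$ (isomorphisms, by the definition of $M\Gamma(d)$ and Lemma \ref{lemmset}) in the $\mathscr{R}$-case and identities in the $\mathscr{L}$-case, the use of Lemma \ref{lemgc} and Proposition \ref{progreen} for condition (ii), the deferral of the purely biorder-theoretic points to \cite[Section V.1.2]{cross} via Lemma \ref{lemugrb}, and the reduction of (ICC1) to the naturality datum supplied by axiom (CC~4) together with the fact that the connecting isomorphism $\upsilon$ of Theorem \ref{thmcxncon} preserves the quasi-orders on idempotents, are exactly the ingredients the paper intends, presented at (or above) the paper's own level of detail. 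Your second, alternative route is genuinely different from the paper: you invoke Theorem \ref{thmcxncon} to get that $\mathbb{S}\Omega$ is concordant, apply Armstrong's theorem \cite{armstrong} to obtain its inductive cancellative category, and transfer the structure along $(c,d)\mapsto(\gamma(c,d),\delta(c,d))$, $u\mapsto(\gamma(c,d)\ast u,\delta(c',d')\ast u^{\ddagger})$. This is logically legitimate (Armstrong's theorem is prior work, and Lemma \ref{lemesg} plus the computation $\gamma(c,d)\cdot\gamma(c',d)\ast v=\gamma(c,d)\ast(\gamma(c',d)(c)v)^{\circ}$ shows your map is a functorial bijection matching distinguished morphisms), and it buys you freedom from checking (ICC1), (ICC2) and their duals by hand; what it costs is precisely the order-compatibility check you flag --- that $\leq_\Omega$ agrees with Armstrong's order on $\mathcal{I}(\mathbb{S}\Omega)$, including the restriction/corestriction data --- and it is less in the spirit of the section, whose point is to exhibit the inductive cancellative category intrinsically inside the cross-connection so as to define the functor $\mathbb{I}\colon\mathbf{CC}\to\mathbf{ICC}$ directly rather than by composing the equivalence of Theorem \ref{cateq} with Armstrong's. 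Either route, carried out with the bookkeeping you acknowledge, proves the theorem.
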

Further, given a CC-morphism $m\colon \Omega\to \Omega'$ between cross-connections $\Omega=(\Gamma,\Delta;\mathcal{C},\mathcal{D})$ and $\Omega'=(\Gamma',\Delta';\mathcal{C}',\mathcal{D}')$, we can easily verify that $m_{|\mathcal{I}(\Omega)}\colon\mathcal{I}(\Omega)\to \mathcal{I}(\Omega')$ is an inductive functor in the sense of \cite{armstrong}. 
Thus we obtain a functor $\mathbb{I}$ from the category $\mathbf{CC}$ of cross-connections to the category $\mathbf{ICC}$ of inductive cancellative categories.

Further, generalising the discussion in \cite[Section IV]{indcxn2}, we can construct an adjoint inverse functor $\mathbb{I}'\colon\mathbf{ICC}\to \mathbf{CC}$. Using these functors, we can prove the following direct equivalence, whose proof we omit.
\begin{thm}
The category $\mathbf{CC}$ of cross-connections of consistent categories is equivalent to the category $\mathbf{ICC}$ of inductive cancellative categories.
\end{thm}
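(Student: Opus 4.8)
The plan is to establish the equivalence $\mathbf{CC}\simeq\mathbf{ICC}$ by a chain of three category equivalences, each already available to us. First, Theorem~\ref{cateq} gives an adjoint equivalence $\mathbf{CS}\simeq\mathbf{CC}$ via the functors $\mathbb{C}$ and $\mathbb{S}$. Secondly, Armstrong's theorem (cited in Section~\ref{sec1}) gives an equivalence $\mathbf{CS}\simeq\mathbf{ICC}$ between the category of concordant semigroups and the category of inductive cancellative categories. Composing these two equivalences already yields an equivalence $\mathbf{CC}\simeq\mathbf{ICC}$; what remains, and what makes the theorem worth stating, is to verify that the composite equivalence is (naturally isomorphic to) the explicit functor $\mathbb{I}\colon\mathbf{CC}\to\mathbf{ICC}$ constructed in this section together with its claimed adjoint inverse $\mathbb{I}'$. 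So the real content is the identification, not the mere existence.

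The key steps, in order, are as follows. First I would make precise the passage $\Omega\mapsto\mathcal{I}(\Omega)$: given a cross-connection $\Omega$, the associated concordant semigroup is $\mathbb{S}\Omega$ (Theorem~\ref{thmcxncon}), and Armstrong associates with any concordant semigroup $T$ its inductive cancellative category---concretely, the category whose objects form the biordered set $E(T)$ and whose morphisms are suitable ``traces'' of elements of $T$. I would check that applying Armstrong's construction to $\mathbb{S}\Omega$ reproduces exactly $(\mathcal{I}(\Omega),\leq_\Omega)$ as defined above: the object set $E_\Omega$ matches $E(\mathbb{S}\Omega)$ by Lemma~\ref{lemesg}, a morphism $u\colon c\to c'$ of $\mathcal{C}$ that is a bimorphism corresponds (via $\gamma(c,d)\ast u$ and the connecting isomorphism $\upsilon$ of Theorem~\ref{thmcxncon}) to an element of $\mathbb{S}\Omega$ with the appropriate left/right traces, the partial order $\leq_\Omega$ matches Armstrong's order on traces, and the restriction/corestriction operations and the distinguished morphisms $\gamma(c',d')(c)$ match Armstrong's. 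This gives a natural isomorphism $\mathbb{I}\cong \mathbb{A}\circ\mathbb{S}$, where $\mathbb{A}\colon\mathbf{CS}\to\mathbf{ICC}$ is Armstrong's functor. Secondly, since $\mathbb{S}$ is half of an equivalence and $\mathbb{A}$ is an equivalence, $\mathbb{I}\cong\mathbb{A}\circ\mathbb{S}$ is an equivalence; its quasi-inverse is $\mathbb{C}\circ\mathbb{A}^{-1}$, and one checks this agrees (up to natural isomorphism) with the functor $\mathbb{I}'$ obtained by generalising the construction of \cite{indcxn2}. Finally I would assemble the unit and counit natural isomorphisms from those of the two constituent equivalences, concluding $\mathbf{CC}\simeq\mathbf{ICC}$.

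Alternatively---and this is closer to what \cite{indcxn2} does in the regular case---one can bypass the detour through $\mathbf{CS}$ and prove the equivalence directly: exhibit $\mathbb{I}$ and $\mathbb{I}'$ explicitly (the construction of $\mathbb{I}$ is already sketched above; for $\mathbb{I}'$ one takes an inductive cancellative category $\mathcal{I}$, recovers two consistent categories by ``localising'' $\mathcal{I}$ at each pair of $\mathscr{R}$- and $\mathscr{L}$-classes respectively, and reconstructs the cross-connection functors from the inductive structure), then construct explicit natural isomorphisms $1_{\mathbf{CC}}\cong\mathbb{I}'\circ\mathbb{I}$ and $1_{\mathbf{ICC}}\cong\mathbb{I}\circ\mathbb{I}'$. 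The components of the first are built from the isomorphisms $F_\Omega,G_\Omega$ of Theorem~\ref{thmadjsc} restricted to bimorphisms; the components of the second reuse Armstrong's structure theorem.

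I expect the main obstacle to be the faithful bookkeeping in the first approach's identification step: one must verify that the order $\leq_\Omega$, the restrictions $(c_1,d_1){\downharpoonleft}u$ and corestrictions $u{\downharpoonright}(c_1',d_1')$, and especially the distinguished morphisms $[e,f]=\gamma(c',d')(c)$ satisfy Armstrong's axioms (OCC1)--(OCC5) and (ICC1)--(ICC2) \emph{in a way that is compatible with} the traces coming from $\mathbb{S}\Omega$---in particular that the connecting isomorphism $\upsilon$ of Theorem~\ref{thmcxncon} is precisely the gadget encoding the idempotent-connectedness data that Armstrong uses. This is conceptually routine given axioms (CC\,4) and (CC\,5), but the diagram-chasing linking $T^u$, its extension to $\langle c\rangle$, the transpose $u^\ddagger$, and Armstrong's singular $E$-square condition (ICC2) is delicate. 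In the interest of brevity, as with the preceding theorems of this section, the detailed verification is a straightforward (if lengthy) generalisation of \cite[Section III]{indcxn2}, and is omitted.
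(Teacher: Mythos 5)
Your proposal is correct in outline, but your primary route is genuinely different from the one the paper intends. The paper's own treatment is the ``direct'' one: it constructs the functor $\mathbb{I}\colon\mathbf{CC}\to\mathbf{ICC}$ explicitly via $\Omega\mapsto(\mathcal{I}(\Omega),\leq_\Omega)$ and $m\mapsto m_{|\mathcal{I}(\Omega)}$, asserts that an adjoint inverse $\mathbb{I}'\colon\mathbf{ICC}\to\mathbf{CC}$ can be built by generalising \cite[Section III]{indcxn2} (recovering the two consistent categories and the cross-connection functors directly from the inductive cancellative structure, without passing through a semigroup), and then omits the verification---exactly your second, ``alternative'' route. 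Your main route instead composes Theorem~\ref{cateq} with Armstrong's equivalence $\mathbf{CS}\simeq\mathbf{ICC}$ \cite{armstrong}; this is a different decomposition, and for the bare statement it is perfectly adequate (both equivalences are between the categories with precisely the morphisms used here: good homomorphisms on one side, inductive functors on the other), indeed it yields existence essentially for free. What it does not give without further work is that the specific functor $\mathbb{I}$ of this section realises the equivalence, and you correctly identify that the real content of your approach is the natural isomorphism $\mathbb{I}\cong\mathbb{A}\circ\mathbb{S}$; your bookkeeping sketch for this (matching $E_\Omega$ with $E(\mathbb{S}\Omega)$ via Lemma~\ref{lemesg}, bimorphisms $u$ with elements $\gamma(c,d)\ast u$ of $\mathbb{S}\Omega$ and their $\mathscr{R}^*$- and $\mathscr{L}^*$-related idempotents as in Theorem~\ref{thmcxncon}, and $\upsilon$ with Armstrong's connecting data) is the right list of checks. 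The trade-off is conceptual: your composite route leans on Armstrong's structure theorem and detours through semigroups, whereas the paper's direct route is precisely meant to exhibit the relationship between the two categorical frameworks without reconstructing the semigroup, in the spirit of \cite{indcxn2}. On completeness, you and the paper are on the same footing: both defer the detailed verification to a (lengthy but routine) generalisation of \cite[Section III]{indcxn2}, so no step of yours fails; just be aware that if the theorem is read as asserting that $\mathbb{I}$ and $\mathbb{I}'$ themselves form the equivalence, the identification step you flag is not optional.
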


\appendix
\section{Cross-connection structure of regular semigroups}\label{appcxnrs}

As mentioned in Section \ref{sec1}, in \cite{cross}, a regular semigroup was constructed from a pair of cross-connected normal categories. The construction is as follows: given an abstractly defined normal category $\mathcal{C}$, we first associate with it an intermediary regular semigroup called the semigroup $\widehat{\mathcal{C}}$ of normal cones. 

It can be seen that given a regular semigroup $S$, its principal left ideals with partial right translations as morphisms and principal right ideals with partial left translations as morphisms, form normal categories $\mathbb{L}(S)$ and $\mathbb{R}(S)$, respectively. Then their corresponding semigroups of normal cones, namely $\widehat{\mathbb{L}(S)}$ and $\widehat{\mathbb{R}(S)}$, will give representations of the regular semigroup we started with.

The interrelationship of the categories $\mathbb{L}(S)$ and $\mathbb{R}(S)$ is abstracted using the notion of a cross-connection. Via the cross-connection, certain normal cones of the  semigroup $\widehat{\mathbb{L}(S)}$ can be `linked' with those of the semigroup $\widehat{\mathbb{R}(S)}$. The collection of all such linked normal cones will form a regular semigroup called the cross-connection semigroup. 

Thus, starting with a pair of abstractly defined cross-connected normal categories $\mathcal{C}$ and $\mathcal{D}$, we can construct a regular cross-connection semigroup as a subdirect product of the regular semigroups $\widehat{\mathcal{C}}$ and $\widehat{\mathcal{D}}$. Conversely, given any regular semigroup, we obtain a pair of cross-connected normal categories: namely $\mathbb{L}(S)$ and $\mathbb{R}(S)$. This correspondence is shown to be a category equivalence.

\section*{Acknowledgements}
\noindent The authors express their heartfelt gratitude to Prof.~L\'{a}szl\'{o}~M\'{a}rki for his keen interest in this article which led to its realisation after several years of deferment.
We thank Prof.~Mikhail~V.~Volkov for his guidelines and suggestions which have helped improve the manuscript considerably.
We are also grateful to a referee for extensive comments on an early version of the manuscript.
The first author thanks Prof.~Victoria~Gould for several fruitful discussions (and in particular for the proof of Proposition \ref{prowr}) during the author's visit to the University of York in January 2018.\\
This is an extended and revised version of the Ph.D. thesis \cite{romeo}\nocite{romeo1} of the second author. The major differences in the construction are that the balanced, reductive categories of \cite{romeo} are called consistent categories here and that we define cross-connections using two functors, instead of a single functor.\\ 	

\bibliographystyle{plain}

\end{document}